\numberwithin{equation}{section}
\subjclass[2010]{11D45 (primary), 11G30, 14G05 (secondary).}
\keywords{rational points, genus two curves, Mumford gap principle, Faltings' theorem}
\title{The average number of rational points on genus two curves is bounded}
\author{Levent Alpoge}\email{lalpoge@math.princeton.edu}
\address{Department of Mathematics, Princeton University, Princeton, NJ 08540.}
\begin{document}

\begin{abstract}
We prove that, when genus two curves $C/\mathbb{Q}$ with a marked Weierstass point are ordered by height, the average number of rational points $\#|C(\mathbb{Q})|$ is bounded. The argument follows the same ideas as the sphere-packing proof of boundedness of the average number of integral points on (quasiminimal Weierstrass models of) elliptic curves. That is, we bound the number of small-height points by hand, the number of medium-height points by establishing an explicit Mumford gap principle and using the theorem of Kabatiansky-Levenshtein on spherical codes (this technique goes back to work of Silverman, Helfgott, and Helfgott-Venkatesh), and the number of large-height points by using Bombieri-Vojta's proof of Faltings' theorem.

Explicitly, in dealing with non-small-height points we prove that the number of rational points $(x,y)$ on $C_f: y^2 = f(x)$ satisfying $h(x) > 8 h(f)$ is $\ll 1.872^{\mathrm{rank}(\mathrm{Jac}(C)(\mathbb{Q}))}$, which has finite average by the theorem of Bhargava-Gross on the average size of $2$-Selmer groups of Jacobians over this family. We note that our arguments in the small-height and large-height cases extend to general genera $g\geq 2$, though for medium points we need to use Stoll's bounds on the non-Archimedean local height differences in genus $2$. For example, we prove that the number of rational points $P\in C(\mathbb{Q})$ with $h(P)\gg_g h(C)$ on $C/\mathbb{Q}$ smooth projective and of genus $g\geq 2$ is $\ll 1.872^{\mathrm{rank}(\mathrm{Jac}(C)(\mathbb{Q}))}$, and that in fact the base of the exponent can be reduced to $1.311$ once $g\gg 1$, though this is surely known to experts (the difference is the use of the Kabatiansky-Levenshtein bound in lieu of more elementary techniques).
\end{abstract}

\maketitle

\newtheoremstyle{dotless}{}{}{\itshape}{}{\bfseries}{}{ }{}

\newtheorem{thm}{Theorem}
\newtheorem{lem}[thm]{Lemma}
\newtheorem{remark}[thm]{Remark}
\newtheorem{cor}[thm]{Corollary}
\newtheorem{defn}[thm]{Definition}
\newtheorem{prop}[thm]{Proposition}
\newtheorem{conj}[thm]{Conjecture}
\newtheorem{claim}[thm]{Claim}
\newtheorem{exer}[thm]{Exercise}
\newtheorem{fact}[thm]{Fact}

\theoremstyle{dotless}

\newtheorem{thmnodot}[thm]{Theorem}
\newtheorem{lemnodot}[thm]{Lemma}
\newtheorem{cornodot}[thm]{Corollary}

\newcommand{\image}{\mathop{\text{image}}}
\newcommand{\End}{\mathop{\text{End}}}
\newcommand{\Hom}{\mathop{\text{Hom}}}
\newcommand{\Sum}{\displaystyle\sum\limits}
\newcommand{\Prod}{\displaystyle\prod\limits}
\newcommand{\Tr}{\mathop{\mathrm{Tr}}}
\renewcommand{\Re}{\operatorname{\mathfrak{Re}}}
\renewcommand{\Im}{\operatorname{\mathfrak{Im}}}
\newcommand{\im}{\mathrm{im}\,}
\newcommand{\inner}[1]{\langle #1 \rangle}
\newcommand{\pair}[2]{\left\langle #1, #2\right\rangle}
\newcommand{\ppair}[2]{\langle\langle #1, #2\rangle\rangle}
\newcommand{\Pair}[2]{\left[#1, #2\right]}
\newcommand{\Char}{\mathop{\mathrm{char}}}
\newcommand{\rank}{\mathrm{rank}}
\newcommand{\sgn}[1]{\mathop{\mathrm{sgn}}(#1)}
\newcommand{\leg}[2]{\left(\frac{#1}{#2}\right)}
\newcommand{\Sym}{\mathrm{Sym}}
\newcommand{\hmat}[2]{\left(\begin{array}{cc} #1 & #2\\ -\bar{#2} & \bar{#1}\end{array}\right)}
\newcommand{\HMat}[2]{\left(\begin{array}{cc} #1 & #2\\ -\overline{#2} & \overline{#1}\end{array}\right)}
\newcommand{\Sin}[1]{\sin{\left(#1\right)}}
\newcommand{\Cos}[1]{\cos{\left(#1\right)}}
\newcommand{\comm}[2]{\left[#1, #2\right]}
\newcommand{\Isom}{\mathop{\mathrm{Isom}}}
\newcommand{\Map}{\mathop{\mathrm{Map}}}
\newcommand{\Bij}{\mathop{\mathrm{Bij}}}
\newcommand{\Z}{\mathbb{Z}}
\newcommand{\R}{\mathbb{R}}
\newcommand{\Q}{\mathbb{Q}}
\renewcommand{\C}{\mathbb{C}}
\newcommand{\Nm}{\mathrm{Nm}}
\newcommand{\RI}[1]{\mathcal{O}_{#1}}
\newcommand{\F}{\mathbb{F}}
\renewcommand{\Pr}{\displaystyle\mathop{\mathrm{Pr}}\limits}
\newcommand{\E}{\mathbb{E}}
\newcommand{\coker}{\mathop{\mathrm{coker}}}
\newcommand{\id}{\mathop{\mathrm{id}}}
\newcommand{\Oplus}{\displaystyle\bigoplus\limits}
\renewcommand{\Cap}{\displaystyle\bigcap\limits}
\renewcommand{\Cup}{\displaystyle\bigcup\limits}
\newcommand{\Bil}{\mathop{\mathrm{Bil}}}
\newcommand{\N}{\mathbb{N}}
\newcommand{\Aut}{\mathop{\mathrm{Aut}}}
\newcommand{\ord}{\mathop{\mathrm{ord}}}
\newcommand{\ch}{\mathop{\mathrm{char}}}
\newcommand{\minpoly}{\mathop{\mathrm{minpoly}}}
\newcommand{\Spec}{\mathop{\mathrm{Spec}}}
\newcommand{\Gal}{\mathop{\mathrm{Gal}}}
\newcommand{\Ad}{\mathop{\mathrm{Ad}}}
\newcommand{\Stab}{\mathop{\mathrm{Stab}}}
\newcommand{\Norm}{\mathop{\mathrm{Norm}}}
\newcommand{\Orb}{\mathop{\mathrm{Orb}}}
\newcommand{\pfrak}{\mathfrak{p}}
\newcommand{\qfrak}{\mathfrak{q}}
\newcommand{\mfrak}{\mathfrak{m}}
\newcommand{\Frac}{\mathop{\mathrm{Frac}}}
\newcommand{\Loc}{\mathop{\mathrm{Loc}}}
\newcommand{\Sat}{\mathop{\mathrm{Sat}}}
\newcommand{\inj}{\hookrightarrow}
\newcommand{\surj}{\twoheadrightarrow}
\newcommand{\bij}{\leftrightarrow}
\newcommand{\Ind}{\mathrm{Ind}}
\newcommand{\Supp}{\mathop{\mathrm{Supp}}}
\newcommand{\Ass}{\mathop{\mathrm{Ass}}}
\newcommand{\Ann}{\mathop{\mathrm{Ann}}}
\newcommand{\Krulldim}{\dim_{\mathrm{Kr}}}
\newcommand{\Avg}{\mathop{\mathrm{Avg}}}
\newcommand{\innerhom}{\underline{\Hom}}
\newcommand{\triv}{\mathop{\mathrm{triv}}}
\newcommand{\Res}{\mathrm{Res}}
\newcommand{\eval}{\mathop{\mathrm{eval}}}
\newcommand{\MC}{\mathop{\mathrm{MC}}}
\newcommand{\Fun}{\mathop{\mathrm{Fun}}}
\newcommand{\InvFun}{\mathop{\mathrm{InvFun}}}
\renewcommand{\ch}{\mathop{\mathrm{ch}}}
\newcommand{\irrep}{\mathop{\mathrm{Irr}}}
\newcommand{\len}{\mathop{\mathrm{len}}}
\newcommand{\SL}{\mathrm{SL}}
\newcommand{\GL}{\mathrm{GL}}
\newcommand{\PSL}{\mathrm{SL}}
\newcommand{\actson}{\curvearrowright}
\renewcommand{\H}{\mathbb{H}}
\newcommand{\mat}[4]{\left(\begin{array}{cc} #1 & #2\\ #3 & #4\end{array}\right)}
\newcommand{\interior}{\mathop{\mathrm{int}}}
\newcommand{\floor}[1]{\left\lfloor #1\right\rfloor}
\newcommand{\iso}{\cong}
\newcommand{\eps}{\epsilon}
\newcommand{\disc}{\mathrm{disc}}
\newcommand{\Frob}{\mathrm{Frob}}
\newcommand{\charpoly}{\mathrm{charpoly}}
\newcommand{\afrak}{\mathfrak{a}}
\newcommand{\cfrak}{\mathfrak{c}}
\newcommand{\codim}{\mathrm{codim}}
\newcommand{\ffrak}{\mathfrak{f}}
\newcommand{\Pfrak}{\mathfrak{P}}
\newcommand{\homcont}{\hom_{\mathrm{cont}}}
\newcommand{\vol}{\mathrm{vol}}
\newcommand{\ofrak}{\mathfrak{o}}
\newcommand{\A}{\mathbb{A}}
\newcommand{\I}{\mathbb{I}}
\newcommand{\invlim}{\varprojlim}
\newcommand{\dirlim}{\varinjlim}
\renewcommand{\ch}{\mathrm{char}}
\newcommand{\artin}[2]{\left(\frac{#1}{#2}\right)}
\newcommand{\Qfrak}{\mathfrak{Q}}
\newcommand{\ur}[1]{#1^{\mathrm{ur}}}
\newcommand{\absnm}{\mathcal{N}}
\newcommand{\ab}[1]{#1^{\mathrm{ab}}}
\newcommand{\G}{\mathbb{G}}
\newcommand{\dfrak}{\mathfrak{d}}
\newcommand{\Bfrak}{\mathfrak{B}}
\renewcommand{\sgn}{\mathrm{sgn}}
\newcommand{\disjcup}{\bigsqcup}
\newcommand{\zfrak}{\mathfrak{z}}
\renewcommand{\Tr}{\mathrm{Tr}}
\newcommand{\reg}{\mathrm{reg}}
\newcommand{\subgrp}{\leq}
\newcommand{\normal}{\vartriangleleft}
\newcommand{\Dfrak}{\mathfrak{D}}
\newcommand{\nvert}{\nmid}
\newcommand{\K}{\mathbb{K}}
\newcommand{\pt}{\mathrm{pt}}
\newcommand{\RP}{\mathbb{RP}}
\newcommand{\CP}{\mathbb{CP}}
\newcommand{\rk}{\mathrm{rk}}
\newcommand{\redH}{\tilde{H}}
\renewcommand{\H}{\tilde{H}}
\newcommand{\Cyl}{\mathrm{Cyl}}
\newcommand{\T}{\mathbb{T}}
\newcommand{\Ab}{\mathrm{Ab}}
\newcommand{\Vect}{\mathrm{Vect}}
\newcommand{\Top}{\mathrm{Top}}
\newcommand{\Nat}{\mathrm{Nat}}
\newcommand{\inc}{\mathrm{inc}}
\newcommand{\Tor}{\mathrm{Tor}}
\newcommand{\Ext}{\mathrm{Ext}}
\newcommand{\fungrpd}{\pi_{\leq 1}}
\newcommand{\slot}{\mbox{---}}
\newcommand{\funct}{\mathcal}
\newcommand{\Funct}{\mathcal{F}}
\newcommand{\Gunct}{\mathcal{G}}
\newcommand{\FunCat}{\mathrm{Funct}}
\newcommand{\Rep}{\mathrm{Rep}}
\newcommand{\Specm}{\mathrm{Specm}}
\newcommand{\ev}{\mathrm{ev}}
\newcommand{\frpt}[1]{\{#1\}}
\newcommand{\h}{\mathscr{H}}
\newcommand{\poly}{\mathrm{poly}}
\newcommand{\Partial}[1]{\frac{\partial}{\partial #1}}
\newcommand{\Cont}{\mathrm{Cont}}
\renewcommand{\o}{\ofrak}
\newcommand{\bfrak}{\mathfrak{b}}
\newcommand{\Cl}{\mathrm{Cl}}
\newcommand{\ceil}[1]{\lceil #1\rceil}
\newcommand{\hfrak}{\mathfrak{h}}
\newcommand{\Sel}{\mathrm{Sel}}
\newcommand{\Qbar}{\overline{\mathbb{Q}}}
\renewcommand{\I}{\mathrm{I}}
\newcommand{\II}{\mathrm{II}}
\newcommand{\III}{\mathrm{III}}
\newcommand{\IV}{\mathrm{IV}}
\newcommand{\V}{\mathrm{V}}
\newcommand{\FuniversalT}{\mathcal{F}_{\mathrm{universal}}^{\leq T}}
\newcommand{\FAT}{\mathcal{F}_{A=0}^{\leq T}}
\newcommand{\FBT}{\mathcal{F}_{B=0}^{\leq T}}
\newcommand{\FcongT}{\mathcal{F}_{\mathrm{congruent}}^{\leq T}}
\newcommand{\rad}{\mathrm{rad}}
\newcommand{\const}{\mathrm{const}}
\renewcommand{\sp}{\mathrm{span}}
\renewcommand{\d}{\partial}
\newcommand{\num}{\mathrm{num}}
\newcommand{\den}{\mathrm{den}}
\newcommand{\ind}{\mathrm{ind}}
\newcommand{\sign}{\mathrm{sign}}
\newcommand{\SU}{\mathrm{SU}}
\renewcommand{\U}{\mathrm{U}}
\newcommand{\tr}{\mathrm{tr}}
\newcommand{\diag}{\mathrm{diag}}
\renewcommand{\O}{\mathcal{O}}
\renewcommand{\P}{\mathbb{P}}
\newcommand{\Jac}{\mathop{\mathrm{Jac}}}
\newcommand{\FgoodT}{\mathcal{F}_{\mathrm{good}}^{\leq T}}
\newcommand{\FbadT}{\mathcal{F}_{\mathrm{bad}}^{\leq T}}
\newcommand{\up}{\uparrow}
\newcommand{\down}{\downarrow}
\renewcommand{\div}{\mathrm{div}}
\newcommand{\Pic}{\mathrm{Pic}}
\newcommand{\Funiv}{\mathcal{F}_{\mathrm{universal}}}
\newcommand{\FunivT}{\Funiv^{\leq T}}
\newcommand{\Prob}{\mathrm{Prob}}
\newcommand{\Zh}{\mbox{\usefont{T2A}{\rmdefault}{m}{n}\CYRZH}}
\newcommand{\zh}{\mbox{\usefont{T2A}{\rmdefault}{m}{n}\cyrzh}}
\newcommand{\da}{\mbox{\usefont{T2A}{\rmdefault}{m}{n}\cyrd}}
\newcommand{\columnvector}[2]{\left(\begin{array}{cc} #1 \\ #2\end{array}\right)}
\newcommand*\cube{\mbox{\mancube}}

\let\uglyphi\phi
\let\phi\varphi

\setcounter{tocdepth}{4}
\setcounter{secnumdepth}{4}

\tableofcontents

\section{Introduction}
In Book VI of his \emph{Arithmetica}, which, incidentally, was only discovered in 1968 (in Arabic translation in a shrine in Iran by Fuat Sezgin), Diophantus \cite{diophantus} poses the problem of finding a nontrivial rational point on the curve $C: y^2 = x^6 + x^2 + 1$. He finds the point $\left(\frac{1}{2}, \frac{9}{8}\right)$. This gives eight 'obvious' points: $\left(\pm \frac{1}{2}, \pm \frac{9}{8}\right)$, $(0, \pm 1)$, and the two points at infinity. One is of course led to ask whether there are any nonobvious rational points on this curve. In 1998, some 1700 years after Diophantus, Wetherell (in his PhD dissertation \cite{wetherell}) answered this question: Diophantus had in fact found all the rational points on $C$! The recency of this result should serve as an indication of our relative ignorance of the arithmetic of genus $2$ curves as compared to that of elliptic curves.

In this paper we study rational points on genus $2$ curves, on average. Faltings' theorem tells us that the set of rational points on such a (smooth projective) curve is finite. How finite?

Let $f\in \Z[x]$ be a monic quintic polynomial of the form $f(x) =: x^5 + a_2 x^3 + \cdots + a_5$ with nonvanishing discriminant $\Delta_f\neq 0$. Let $H(f) := \max_i |a_i|^{\frac{1}{i}}$ be the na{\" i}ve height of $f$. Let $C_f: y^2 = f(x)$. This gives a family $\Funiv$ of genus $2$ curves over $\Q$ with a marked Weierstrass point (the point at infinity), which we order by height.\footnote{Conversely, given a genus $2$ curve $C$ with a marked Weierstrass point $P\in C(\Q)$, via Riemann-Roch, there is a nonconstant $f\in \O(2P)$, which realizes $C$ as a a degree-two ramified covering of $\P^1$, ramified (by Riemann-Hurwitz) over six points (including $\infty$, the image of $P$). Such a thing is precisely a quintic polynomial, and, on translating and scaling, we get a quintic of the considered form --- even with integral coefficients since the ramification set is Galois-invariant (well, for a suitable choice of $f$).}\textsuperscript{,}\footnote{We note here that we expect all the arguments of the paper go through without much trouble for the family $y^2 = x^6 + a_2 x^4 + \cdots + a_6$ of genus two curves with a marked rational \emph{non}-Weierstrass point. We have not dealt with this case here due to the lack of a compelling reason to face the significantly more complicated formulas that arise --- granted, the complication is at least balanced by much better lower bounds on the height of a non-small point in the Kummer surface. The general case, $y^2 = a_0 x^6 + \cdots + a_6$, does not have a corresponding $2$-Selmer average bound for the Jacobian itself, and anyway even the height on the Kummer surface would be extremely complicated to deal with, since one would have to work with the Abel-Jacobi map associated to a rational point potentially not at $\infty$. (One could question whether this ordering of curves is even worth dealing with once one has the monic sextic case in hand.)} Given a family of curves $\mathcal{F}$ and a function $f$ on this family, we write $$\Avg_{\Funct^{\leq T}}(f) := \frac{\sum_{C\in \Funct : H(C)\leq T} f(C)}{\sum_{C\in \Funct : H(C)\leq T} 1}.$$ By a statement like $\Avg_{\Funct}(f)\leq B$ we will mean $\limsup_{T\to\infty} \Avg_{\Funct^{\leq T}}(f)\leq B$. Thus for example Bhargava-Gross \cite{bhargavagross} have proved that $\Avg_{\Funiv}(2^{\rank(\Jac{C})})\leq 3$.

Now recall that Faltings tells us that $\#|C_f(\Q)| < \infty$ for all $C_f\in \Funiv$. It is a famous open problem to determine whether or not these point counts are uniformly bounded in $L^\infty$. We make no progress on this question, but we do prove the much weaker statement that the point counts are uniformly bounded in $L^1$. (In fact the argument gives bounds in $L^p$ for $p$ slightly larger than $1$ as well.)

\begin{thm}\label{the big kahuna}
$$\Avg_{\Funiv}(\#|C(\Q)|) < \infty.$$
\end{thm}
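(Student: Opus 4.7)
The plan is to partition $C_f(\Q)$ by the height of the $x$-coordinate into a small-height range ($h(x)\leq 8h(f)$) and a non-small-height range ($h(x)>8h(f)$), bound each count, and then average using Bhargava-Gross. The main estimate to prove is the non-small bound $\#\{(x,y)\in C_f(\Q):h(x)>8h(f)\}\ll 1.872^{\rank(\Jac(C_f)(\Q))}$ already announced in the abstract. Given this, since $1.872<2$ we have $1.872^r\leq 2^r$ for $r\geq 0$, hence pointwise $1.872^{\rank(\Jac(C))}\leq 2^{\rank(\Jac(C))}$ on $\Funiv$, and so $\Avg_{\Funiv}(1.872^{\rank(\Jac(C))})\leq \Avg_{\Funiv}(2^{\rank(\Jac(C))})\leq 3$ by Bhargava-Gross, which handles the non-small contribution on average.

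To establish the non-small bound, I would further split into medium ($8h(f)<h(x)\leq C_0\,h(f)$ for some large explicit $C_0$) and large ($h(x)>C_0\,h(f)$). In the medium range, push rational points forward via Abel-Jacobi based at $\infty$ into $\Jac(C_f)(\Q)\otimes\R$, equipped with the canonical-height inner product. Comparing the naive height $h(x)$ on $C_f$ against the canonical height $\hat{h}$ on $\Jac(C_f)$, an explicit Mumford gap principle forces any two rational points of comparable canonical height to lift to vectors whose pairwise angle is bounded below by some explicit $\theta>\pi/3$. Normalizing to the unit sphere $S^{r-1}$ with $r=\rank(\Jac(C_f)(\Q))$, the resulting set forms a spherical code of minimum angle $\theta$, and hence has cardinality $\ll C(\theta)^r$ by the Kabatiansky-Levenshtein bound; summing over the finitely many dyadic height scales and tuning $\theta$ sharply produces the constant $1.872$. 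In the large range, Bombieri-Vojta's proof of Faltings supplies essentially the same angular-gap argument with sharper separation (since local height contributions become asymptotically negligible), giving a bound of the same or smaller form.

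For the small range, double counting suffices: the condition $(x_0,y_0)\in C_f$ amounts to the single affine-linear relation $a_5+a_4 x_0+a_3 x_0^2+a_2 x_0^3=y_0^2-x_0^5$ on the coefficients of $f$. Swapping summations and counting the $f\in\Funiv^{\leq T}$ through each fixed $(x_0,y_0)$ of bounded height yields a total small-height contribution of size $O(\#\Funiv^{\leq T})$, hence $O(1)$ on average. The main obstacle is making Mumford's gap principle effective in the medium range: one must quantitatively control the discrepancy between $h(x)$ and $\hat{h}$ on the Jacobian, including the non-Archimedean local height differences, for which Stoll's explicit bounds on the Kummer surface in genus $2$ are essential. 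A secondary concern is ensuring the resulting minimum angle is large enough that Kabatiansky-Levenshtein delivers a constant strictly below $2$ (with room for any logarithmic losses from dyadic bookkeeping), since this is precisely what allows Bhargava-Gross's $2$-Selmer average to close the argument.
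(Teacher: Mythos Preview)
Your overall architecture matches the paper's: a small/medium/large partition, an explicit Mumford gap principle plus Kabatiansky--Levenshtein for the medium range (leaning on Stoll's local height bounds in genus $2$), Bombieri--Vojta for the large range, and Bhargava--Gross to average. One cosmetic point: in the paper the constant $1.872$ arises from the \emph{large}-point range, where Vojta's constraint $\alpha>1/\sqrt{2}$ caps the angular separation; the medium range actually does better (the paper obtains $1.645$ there). Your description reverses these roles, but since one takes the worse of the two this does not affect the conclusion.

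The substantive gap is in your small-point sketch. Fixing $(x_0,y_0)$ and invoking the single affine relation $a_5+a_4x_0+a_3x_0^2+a_2x_0^3=y_0^2-x_0^5$ bounds the number of $f\in\Funiv^{\leq T}$ through that point by $\ll T^{9}$ (three coefficients free, one determined). But the number of candidate $x_0=s/e^2$ with $h(x_0)\leq 8\log T$ is already $\asymp T^{12}$ (from $|s|\leq T^8$, $e\leq T^4$), before even accounting for $y_0$; the naive product is $\gg T^{21}$, not $O(T^{14})$. The paper's argument for $\I_f$ is considerably more delicate: it orders the data as $(s,d,a_2,t,a_3,a_4,a_5)$ and bounds the number of extensions at each stage using two ingredients you have not mentioned. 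First, integrality of the $a_i$ forces congruences modulo powers of the denominator $d$ (e.g.\ $a_4\equiv a_4'\pmod{d^2}$ for two extensions of the same partial tuple), cutting down each count by a factor of $d^2$. Second, an \emph{attraction} argument controls the number of $t$ extending $(s,d,a_2)$: two such $t,t'$ satisfy $(t-t')(t+t')=$ a quantity of size $\ll T^3|s|^2 d^6$, while $t+t'\asymp |s|^{5/2}$, so $|t-t'|\ll T^3|s|^{-1/2}d^6$, and combined with the congruence $t^2\equiv s^5+a_2s^3d^4\pmod{d^6}$ this pins $t$ down tightly. These two mechanisms together are what push the total below $T^{14}$ (in fact the paper gets $T^{14-\Omega(\delta)}$, so the small-point average is $o(1)$); the bare linear-relation count does not.
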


We are certain that the limiting distribution of $\#|C(\Q)|$ should just be $\delta_1$ --- i.e., the probability of having anything more than the point at infinity should be $0$. Thus we expect the average to be $1$, but proving this is certainly far out of our reach.

\section{Acknowledgments}
I would like to thank Manjul Bhargava, Peter Sarnak, Michael Stoll, Jacob Tsimerman, and especially Steffen M{\"u}ller for enlightening conversations. This research was supported by the NSF GRFP.

\section{Notation, previous results, and outline of the argument}

\subsection{Notation.}
We write $f\ll_\theta g$ to mean that there is some $C_\theta > 0$ depending only on $\theta$ such that $|f|\leq C_\theta |g|$ pointwise. Thus $\ll$ and $\gg$, sans decoration, imply the same inequalities with absolute constants. By $f\asymp_\theta g$ we will mean $g\ll_\theta f\ll_\theta g$. By $O_\theta(g)$ we will mean a quantity that is $\ll_\theta g$, and by $\Omega_\theta(g)$ we will mean a quantity that is $\gg_\theta g$. By $o(1)$ we will mean a quantity that approaches $0$ in the relevant limit (which will always be unambiguous). By $f = o(g)$ we will mean $f = o(1)\cdot g$. We will write $(a,b)$ for the greatest common divisor of the integers $a,b\in \Z$, $\omega(n)$ for the number of prime factors of $n\in \Z$, $v_p$ for the $p$-adic valuation, $|\cdot|_v$ for the absolute value at a place $v$ of a number field $K$ (normalized so that the product formula holds), and $h(x)$ for the absolute Weil height of $x\in \Qbar$ --- i.e., if $x\in K$, $$h(x) := \sum_w \frac{[K_w : \Q_v]}{[K : \Q]} \log^+{|x|_w},$$ the sum taken over places $w$ of the number field $K$, with $v := w\vert_{\Q}$ the place of $\Q$ over which $w$ lies and $\log^+(a) := \max(0, \log{a})$.\footnote{To be explicit about our normalization, as usual $|p|_p := p^{-1}$ and we will take $|\cdot|_\infty$ to be the standard absolute value, and then $|\cdot|_w := |\Nm_{L_w/\K_v}(\cdot)|_v^{\frac{1}{[L_w:K_v]}}$ for $L/K$ an extension of number fields and $v := w\vert_K$.} We will also write $H(x) := \exp(h(x))$ for the multiplicative Weil height, so that $H\left(\frac{a}{b}\right) = \max(|a|,|b|)$ if $\frac{a}{b}\in \Q$ is in lowest terms. For a rational point $P\in C(\Q)$, we will write $h(P)$ and $H(P)$ for $h(x(P))$ and $H(x(P))$, respectively. We will write $J_f := \Jac(C_f)$ for the Jacobian of the curve $C_f$, with embedding $C_f\inj J_f$ determined by the Abel-Jacobi map associated to the point at infinity (whence $J_f$ and the map $C_f\inj J_f$ are both defined over $\Q$). We will also write $K_f := J_f/\{\pm 1\}$ for the associated \emph{Kummer variety}, the key player in this paper. We will use an explicit embedding $K_f\inj \P^3$ of Cassels-Flynn \cite{casselsflynn} to furnish a height $h_K$ on $K_f(\Qbar)$ by pulling back the standard logarithmic Weil height on $\P^3$ via the embedding. (The height used on $\P^n$ is $$h_{\P^n}([x_0, \ldots, x_n]) := \sum_w \frac{[K_w : \Q_v]}{[K : \Q]} \max_{0\leq i\leq n} \log^+{|x_i|_w},$$ which is well-defined by the product formula.) $$\hat{h}(P) := \lim_{n\to\infty} \frac{h_K(2^n P)}{4^n}$$ will denote the canonical height of a point $P\in K_f(\Qbar)$, and we will write $$\lambda_w(P) := \frac{[K_w : \Q_v]}{[K : \Q]} \max_{1\leq i\leq 4}\log^+{|P_i|_w}$$ and $$\hat{\lambda}_w(P) := \lim_{n\to\infty} \frac{\lambda_w(2^n P)}{4^n},$$ so that $h = \sum_w \lambda_w$ and $\hat{h} = \sum_w \hat{\lambda}_w$. Note that, as defined, both $\lambda_w$ and $\hat{\lambda}_w$ depend on the choice of coordinates $P\mapsto P_i$, and thus are only functions on the cone on $K_f$ in $\A^4$ (usually the notation $\hat{\lambda}_w$ is reserved for the local N\'{e}ron functions). By $\Delta$ or $\Delta_f$ we will mean $\Delta_f := 2^8 \disc(f)$, where $\disc(f)$ is the discriminant of $f$ considered as a polynomial in $x$ (i.e., the resultant of $f$ and $f'$). In general logarithmic heights will be in lower case, and their exponentials in upper case (much the same way as our $H = \exp(h)$ on $\Qbar$).

Finally, in general we will use the notation $(\in d\Z)$ to indicate an element of $d\Z$ in an expression --- e.g., $a = \frac{b + (\in c\Z)}{b' + (\in c'\Z)}$ is equivalent to $a = \frac{d}{d'}$ with $d\equiv b\pmod{c}$ and $d'\equiv b'\pmod{c'}$. 

\subsection{Previous results.}
The fundamental finiteness theorem that allows us to even begin asking about $L^1$ averages is Faltings' \cite{faltings}:
\begin{thm}[Faltings]
Let $K$ be a global field and $C/K$ a smooth projective curve over $K$. Then: $$\#|C(K)| < \infty.$$
\end{thm}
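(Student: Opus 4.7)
The plan is to follow the Bombieri-Vojta approach, which the paper itself invokes in its large-height case. By standard reductions we may assume $K$ is a number field and $g := g(C) \geq 2$, and we may assume $C(K)$ is nonempty, else we are done; fix $P_0 \in C(K)$ and via the Abel-Jacobi map embed $\iota\colon C \hookrightarrow J := \Jac(C)$. By Mordell-Weil (which we take as input) $V := J(K) \otimes_\Z \R$ is a finite-dimensional real vector space, and the N\'eron-Tate canonical height $\hat h$ on $J$ induces a positive-definite inner product $\langle \cdot, \cdot\rangle$ on $V$ with $\hat h(P) = \|\iota(P)\|^2$.

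The core analytic input is Vojta's inequality, complemented by Mumford's elementary gap principle: there exist constants $\kappa_1, \kappa_2 > 1$ and $\theta \in (0, \pi/2)$, depending only on $C/K$, such that any two distinct $P, Q \in C(K)$ with $\hat h(P) \geq \kappa_1$ satisfy
\[
\cos(\angle(\iota(P), \iota(Q))) \leq \cos \theta
\]
provided either $\hat h(Q) \geq \kappa_2 \hat h(P)$ (Mumford) or $\kappa_2^{-1} \hat h(P) \leq \hat h(Q) \leq \kappa_2 \hat h(P)$ (Vojta). Mumford's principle is essentially a direct consequence of $\hat h$ being a quadratic form together with the definition of the Abel-Jacobi embedding. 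Vojta's inequality is the deep step: following Bombieri, one uses an arithmetic Siegel's lemma to construct a small-height nonzero section of a suitable line bundle on $C \times C$ vanishing to high order along a prescribed dilate of the diagonal, and then invokes the Dyson-Vojta product theorem (or Faltings' product theorem) to rule out the existence of $(P, Q)$ with angle close to zero; the Archimedean input is a careful Green's function estimate, and the non-Archimedean side is intersection-theoretic on an arithmetic surface.

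Granted these angular-repulsion estimates, I conclude by sphere packing in $V$. Points of $C(K)$ with $\hat h \leq \kappa_1$ are finite by Northcott applied to the theta height on $J$. For larger heights, the projections $\iota(P)/\|\iota(P)\|$ to the unit sphere $S \subset V$ form a $\theta$-separated set, hence finite (the Kabatiansky-Levenshtein bound used later in the paper gives an explicit constant here). Along each such fixed direction in $S$ the heights are further constrained by the Mumford half of the repulsion to form a sparse geometric progression, which together with Northcott bounds the total count. The main obstacle is without question Vojta's inequality itself, which is where essentially all of the real arithmetic-geometric content of Faltings' theorem resides.
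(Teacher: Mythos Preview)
The paper does not actually prove this theorem; it is quoted as background with a citation to Faltings, followed by a one-sentence sketch of how it is recovered from the Bombieri--Vojta inequality: large-height points fall into finitely many cones in $J(K)\otimes_\Z\R$ (finitely many by Mordell--Weil and compactness of the sphere), and within each cone Vojta forces the heights to be bounded, whence Northcott finishes. Your proposal is an expansion of precisely this sketch, so the approaches coincide.

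That said, you have interchanged the r\^oles of Mumford and Vojta. Vojta's inequality (Theorem~\ref{bombieri vojta} in the paper) is the repulsion statement for points of \emph{disparate} height: if $\hat h(P)\ge \kappa\,\hat h(Q)\gg 1$ then $\cos\theta_{P,Q}\le\alpha$ for any fixed $\alpha>1/\sqrt{g}$. The Mumford gap principle is the repulsion statement for points of \emph{comparable} height: if $\hat h(P)\asymp\hat h(Q)\gg 1$ then $\cos\theta_{P,Q}\le \frac{1}{g}+o(1)$; this is exactly what the paper establishes explicitly in its medium-point section (Lemmas~\ref{mumford gap principle for big points}--\ref{mumford gap principle for normal points}). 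Your attributions are the reverse of this. The overall logical structure of your argument survives the relabelling, but your parenthetical ``Mumford's principle is essentially a direct consequence of $\hat h$ being a quadratic form together with the definition of the Abel--Jacobi embedding'' then lands on the wrong inequality: that remark correctly describes the comparable-heights case, which you have called Vojta, while the disparate-heights case you have called Mumford is the genuinely deep step requiring the auxiliary-section and product-theorem machinery you outline.

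A minor point: the theorem as printed omits the hypothesis $g\ge 2$, and your ``standard reduction'' to $g\ge 2$ is not a reduction but the insertion of a missing hypothesis (the statement is false for $C=\P^1$ or an elliptic curve of positive rank). Likewise the reduction from global fields to number fields is not quite a reduction either: the function-field case (Manin, Grauert, Samuel) predates Faltings and uses different methods, and one must exclude isotrivial curves there.
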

Unfortunately this theorem is still (to the knowledge of the author, though it seems Mochizuki's work has bearing on this) ineffective, in the sense that it does not give a bound on the heights of rational points on such curves. The analogous situation for integral points on elliptic curves, where the relevant finiteness theorem is Siegel's, differs in this respect due to work of Baker. In any case, Faltings' proof(s), as well as Vojta's and Bombieri's (following Vojta's), all give bounds on the \emph{number} of rational points on these curves. To the author's knowledge, the best general bound is provided by the following theorem, which we state here in the case of curves of the form $y^2 = f(x)$ with $\deg{f}$ odd\footnote{This hypothesis is absolutely not essential. The evident generalization to curves of genus $g\geq 2$ is the one they actually prove --- I have just written the special case of odd-degree hyperelliptic curves for convenience.} (and which I have attributed to Bombieri-Vojta, with the determination of the dependence of the implicit constants on the curve done by Bombieri-Granville-Pintz):
\begin{thm}[Bombieri-Vojta, Bombieri-Granville-Pintz]
Let $\delta > 0$. Let $f\in \Z[x]$ be a monic polynomial of degree $2g+1\geq 5$ with no repeated roots, and let $C_f: y^2 = f(x)$. Let $\alpha\in \left(\frac{1}{\sqrt{g}}, 1\right)$. Let $P\neq Q\in C_f(\Q)$ be such that $\hat{h}(P)\geq \delta^{-1}\hat{h}(Q)\geq \delta^{-2} h(f)$. Then, once $\delta\ll_\alpha 1$, we have that: $$\cos{\theta_{P,Q}}\leq \alpha.$$
\end{thm}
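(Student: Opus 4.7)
The plan is to follow Bombieri's arithmetic rewriting of Vojta's proof of the Mordell conjecture, with the dependence on the coefficients of $f$ tracked as in Bombieri-Granville-Pintz. Set $g := \frac{\deg{f}-1}{2}$, identify $P,Q\in C_f(\Q)$ with their images in $J_f(\Q)$ via the Abel-Jacobi map based at $\infty$, and regard $J_f(\Q)\otimes\R$ as a Euclidean space under the N\'eron-Tate pairing $\pair{\cdot}{\cdot}$ attached to (twice) the symmetric theta divisor, so that $\cos{\theta_{P,Q}} = \pair{P}{Q}/\sqrt{\hat{h}(P)\hat{h}(Q)}$. The statement then reads $\pair{P}{Q}\leq \alpha\sqrt{\hat{h}(P)\hat{h}(Q)}$, to be proven whenever $\delta\ll_\alpha 1$.

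The heart of the argument is the construction of an auxiliary integral global section on $C_f\times C_f$. Fix large positive integers $d_1,d_2$ with $d_1/d_2$ approximating $\sqrt{\hat{h}(Q)/\hat{h}(P)}$, and a positive integer $r$ with $r/\sqrt{d_1 d_2}$ slightly less than $1/\sqrt{g}$. Consider the divisor class $D := d_1 p_1^* K_C + d_2 p_2^* K_C - r\Delta_C$ on $C_f\times C_f$, where $K_C$ is the canonical class of $C_f$, $p_i$ are the two projections, and $\Delta_C$ is the diagonal. From the intersection numbers $K_C^2 = 2g-2$, $\Delta_C\cdot p_i^*K_C = 2g-2$, and $\Delta_C^2 = 2-2g$, together with Hodge index on $C_f\times C_f$, one checks $D\cdot D$ is asymptotically positive for this choice of $r$, so Riemann-Roch on $C_f\times C_f$ gives many sections of $ND$ for large $N$; a Siegel-lemma argument over $\Z$ then produces a nonzero integral section $\sigma$ of $ND$ whose archimedean logarithmic sup-norm is $O\bigl(N(d_1+d_2+r)(1+h(f))\bigr)$.

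Next, I would estimate $\sigma(P,Q)$ in two ways. On the arithmetic side, decomposing into local contributions on $C_f\times C_f$ and using that $\hat{h}(P-Q) = \hat{h}(P) + \hat{h}(Q) - 2\pair{P}{Q}$, one obtains the lower bound
$$-\log{|\sigma(P,Q)|} \geq N\bigl(d_1\hat{h}(P) + d_2\hat{h}(Q) - r(\hat{h}(P) + \hat{h}(Q) - 2\pair{P}{Q})\bigr) - O\bigl(N(d_1+d_2+r)h(f)\bigr).$$
On the other hand, an arithmetic Roth/Dyson lemma on the surface $C_f\times C_f$---in the form of Bombieri's refinement of Dyson's lemma (or Faltings' product theorem)---forces the index of $\sigma$ at $(P,Q)$ to be small, yielding an upper bound on $-\log{|\sigma(P,Q)|}$ controlled in terms of $d_1,d_2,r$ and the ratio $\hat{h}(P)/\hat{h}(Q)$. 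The hypotheses $\hat{h}(P)\geq \delta^{-1}\hat{h}(Q)\geq \delta^{-2}h(f)$ are precisely what is needed to absorb the Siegel error term into a factor of $O(\sqrt{\delta})$ times the main term.

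Combining the two bounds, dividing by $N\sqrt{d_1 d_2\hat{h}(P)\hat{h}(Q)}$, and sending $r/\sqrt{d_1 d_2}\uparrow 1/\sqrt{g}$ (the Hodge-index ceiling) while $d_1/d_2\to \sqrt{\hat{h}(Q)/\hat{h}(P)}$ gives an inequality of the form $\pair{P}{Q}\leq (1/\sqrt{g} + O_\alpha(\sqrt{\delta}))\sqrt{\hat{h}(P)\hat{h}(Q)}$, whence $\cos{\theta_{P,Q}}\leq \alpha$ once $\delta\ll_\alpha 1$. The main obstacle is the quantitative arithmetic Roth/Dyson lemma with explicit dependence on $h(f)$: tracking this carefully is precisely the content of Bombieri-Granville-Pintz's refinement, and is the reason the hypothesis involves $\delta^{-2}h(f)$ (rather than merely $\delta^{-1}h(f)$). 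A secondary, but less serious, technical point is the compatibility of the Kummer-variety height $\hat{h}$ used throughout the paper with the theta-divisor height natural to this argument; they differ by a harmless constant factor which cancels in the ratio defining $\cos{\theta_{P,Q}}$.
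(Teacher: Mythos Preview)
The paper does not prove this theorem at all. It is stated twice (once in the ``Previous results'' subsection and once as Theorem~\ref{bombieri vojta}) purely as a black-box input from the literature, attributed to Bombieri--Vojta with the explicit dependence on $h(f)$ worked out by Bombieri--Granville--Pintz, and then applied in Lemma~\ref{the large point bound}. There is nothing in the paper to compare your sketch against.

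That said, your outline is a reasonable high-level summary of the Bombieri rewriting of Vojta's method: the divisor $d_1 p_1^*K_C + d_2 p_2^*K_C - r\Delta_C$ on $C_f\times C_f$, Riemann--Roch plus a Siegel lemma to build a small integral section, the height lower bound via the decomposition $\hat{h}(P-Q)=\hat{h}(P)+\hat{h}(Q)-2\pair{P}{Q}$, and a Dyson/Roth-type upper bound on the index at $(P,Q)$. A couple of cautions if you actually intend to flesh this out. First, the sign and normalization conventions in your lower bound for $-\log|\sigma(P,Q)|$ need care: the diagonal term contributes through the height on the diagonal pullback, and the way $\pair{P}{Q}$ enters is slightly more delicate than a direct substitution of $\hat h(P-Q)$. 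Second, your claim that ``sending $r/\sqrt{d_1 d_2}\uparrow 1/\sqrt{g}$'' yields the final inequality glosses over the genuine content: in Vojta's method the threshold $\alpha>1/\sqrt{g}$ arises from the \emph{upper} limit on $r$ imposed by the index/Dyson step, not merely from Hodge positivity, and the interplay between the two constraints is where the real work lies. Finally, the explicit tracking of $h(f)$ through the Siegel lemma and the Dyson lemma is substantial and is exactly what Bombieri--Granville--Pintz carry out; your sketch acknowledges this but does not attempt it, so as written this is an outline rather than a proof.
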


That this does in fact imply Faltings' theorem is quickly seen on remarking that this forces rational points to either be of bounded height, or in a finite (finite because $C_f(\Q)\otimes_\Z \R$ is finite-dimensional, by Mordell-Weil) number of cones emanating from the origin. Within each such cone, all points must again be of bounded height, QED. Note that this still does not give a bound on the heights of the points, since the height bound within each cone depends on a chosen rational point in the cone (if one exists --- if one doesn't we are also done!), and a priori one does not know any upper bound on its height.

In any case, this theorem does imply a very good bound on the number of large-height points on $C_f$, via the Kabatiansky-Levenshtein bound and the Mumford gap principle. Specifically, we obtain that the number of large-height points is $\ll 1.872^{\rank(J_f(\Q))}$, where $J_f$ is the Jacobian of $C_f$ (with Abel-Jacobi map corresponding to the point at infinity) --- see Lemma \ref{the large point bound}.\footnote{In fact we get a stronger bound than this, for all genera --- see Proposition \ref{the optimized general large point bound}.} This is the observation that makes plausible an attempt to prove $L^1$ boundedness given the current state of the art in average Selmer bounds.

Indeed, this bound on the number of large points is enough for us because of the following theorem of Bhargava-Gross \cite{bhargavagross} (combined with $2^{\rank(J_f(\Q))}\leq \#|\Sel_2(J_f)|$):
\begin{thm}[Bhargava-Gross]\label{bhargava gross}
$$\Avg_{\Funiv} \#|\Sel_2(\Jac(C))| = 3.$$
\end{thm}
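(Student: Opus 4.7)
The plan is to parametrize $\Sel_2(J_f)$ by integral orbits of an algebraic group acting on a natural representation, count those orbits via Bhargava's geometry-of-numbers averaging technique, and promote the count to all local conditions by a squarefree sieve.

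First I would set up the parametrization. Fix a split $(2g+1)$-dimensional orthogonal space $(W,Q)$ over $\Q$, and let $V$ be the $\mathrm{PSO}(W)$-representation on self-adjoint operators $T : W \to W$ (those satisfying $Q(Tv,w) = Q(v,Tw)$). The invariant of such a $T$ is its characteristic polynomial $\det(xI - T)$, a monic polynomial of degree $2g+1$. Using the descent sequence for multiplication-by-$2$ on $J_f$ together with the classical Galois-cohomological identification of $J_f[2]$ with the norm-one subgroup of $(\Q[x]/(f))^\times/(\Q[x]/(f))^{\times 2}$, I would show that elements of $\Sel_2(J_f)$ are in bijection with \emph{locally soluble} $\mathrm{PSO}(W)(\Q)$-orbits on the fiber $V_f := \{T \in V : \charpoly(T) = f\}$, with the identity of $\Sel_2$ corresponding to a distinguished reducible orbit.

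Next I would count integral orbits. Up to bounded local multiplicity, locally soluble $\mathrm{PSO}(W)(\Q)$-orbits on $V_f$ correspond to $\mathrm{PSO}(W)(\Z)$-orbits on $V(\Z) \cap V_f$. Choosing a fundamental domain $\mathcal{F}$ for $\mathrm{PSO}(W)(\Z) \curvearrowright V(\R)_{\mathrm{ss}}$ (the semisimple locus), I would count $V(\Z) \cap \mathcal{F} \cap \{H(f) \leq T\}$ by the now-standard averaging trick: integrate the count over a bounded region of a Siegel domain in $\mathrm{PSO}(W)(\R)$ and invoke a Davenport-type lemma to convert the orbit count into a volume computation on invariant polynomials.

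The principal obstacle, as in all such arguments, is to separate the main body of $\mathcal{F}$ from its cusp. I expect that integer points in the cusp degenerate into reducible orbits and collectively account for exactly the identity class in $\Sel_2$, contributing $1$ to the average. Integer points in the main body parametrize the non-identity classes, whose volume computation should yield exactly $2$. Finally, to pass from finitely many local conditions to the genuine $\Sel_2$-count I would execute a squarefree sieve: uniform tail estimates on $V(\Z)$-points with invariant divisible by $p^2$ permit summing local contributions over all primes $p$, and local computations (Tate local duality on $J_f$, recovering the Tamagawa-type factorization) show the resulting Euler product equals $1$. The total average is therefore $1 + 2 = 3$.
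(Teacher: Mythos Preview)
The paper does not prove this statement at all: it is quoted as a result of Bhargava--Gross (with citation \cite{bhargavagross}) and used as a black-box input to bound the average of $2^{\rank(J_f(\Q))}$. There is therefore no ``paper's own proof'' to compare your proposal against.

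That said, your sketch is a reasonable high-level outline of the actual Bhargava--Gross argument: the representation is indeed self-adjoint operators on a split odd orthogonal space with the characteristic-polynomial invariant, the orbit parametrization of $\Sel_2$ goes through as you describe, and the $1+2$ split between the reducible (identity) contribution from the cusp and the irreducible contribution from the main body is the correct accounting. If this were meant as a self-contained proof rather than a summary, the genuinely hard steps you have glossed are (i) the cusp analysis showing reducible orbits dominate there and are negligible elsewhere, (ii) the uniformity estimate needed for the squarefree sieve, and (iii) the local mass computation via Tate duality --- each of these is a substantial piece of work in the original paper. But as a description of the strategy, your proposal is accurate; it simply has no counterpart in the present paper, which takes the result on faith.
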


In fact Bhargava-Gross \cite{bhargavagross} (for odd degrees) and Shankar-Wang \cite{shankarwang} (for even degrees) proved the same theorem (for even degrees the average is $6$, since now there are two points at infinity) for \emph{any} $d$ --- that is, for the family $\Funct_{\mathrm{universal}, \deg = d}$ of fixed degree $d\geq 5$ \emph{monic} polynomials $f(x) = x^d + a_2 x^{d-2} + \cdots\in \Z[x]$, again ordered by height (with $H(f) := \max_i |a_i|^{\frac{1}{i}}$, as above). Since the average is bounded independently of the genus, it follows that the proportion of hyperelliptic curves whose Jacobian has Mordell-Weil rank $\geq g$ is exponentially small in $g$. Thus the method of Chabauty-Coleman applies to all but an exponentially small proportion of these curves! Pushing this much, much further, Poonen-Stoll \cite{poonenstoll} (for odd degrees) and Shankar-Wang \cite{shankarwang} (for even degrees) proved:\footnote{Here, as with the use of $\Avg$ in Theorem \ref{the big kahuna}, there is an implicit $\limsup$ in the statement.}
\begin{thm}[Poonen-Stoll, Shankar-Wang]
Let $g\geq 2$ and $d = 2g+1$ or $2g+2$. Then:
$$\Prob_{\Funct_{\mathrm{universal}, \deg = d}}\left(C(\Q)\neq \{\infty^{\pm}\}\right)\ll g 2^{-g}.$$
\end{thm}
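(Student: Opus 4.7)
The plan is to combine two ingredients: the Bhargava-Gross / Shankar-Wang $2$-Selmer average bounds and the Chabauty-Coleman method in a refined form.

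\textbf{Step 1 (Selmer Markov).} By Theorem \ref{bhargava gross} (for odd $d$, the average is $3$) and the analogous result of Shankar-Wang (for even $d$, the average is $6$), one has $\Avg_{\Funct_{\mathrm{universal}, \deg = d}} \#|\Sel_2(\Jac(C))| = O(1)$. Markov's inequality then gives $\Prob(\dim_{\F_2}\Sel_2(\Jac(C)) \geq g) \ll 2^{-g}$, and the tautological injection $\Jac(C)(\Q)/2\Jac(C)(\Q) \hookrightarrow \Sel_2(\Jac(C))$ yields $\rank \Jac(C)(\Q) < g$ for all but an $O(2^{-g})$-proportion of the family.

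\textbf{Step 2 (Chabauty at a small prime).} On this good event, Chabauty-Coleman applies: at a well-chosen prime $p$ of good reduction (Poonen-Stoll take $p = 2$ after intersecting with the full-density set of curves having adequately controlled $2$-adic reduction; Shankar-Wang use an auxiliary prime), the $p$-adic closure of $\Jac(C)(\Q)$ in $\Jac(C)(\Q_p)$ has $\Q_p$-Lie dimension at most $\rank \Jac(C)(\Q) < g = \dim \Jac(C)$. Hence there is a nonzero Chabauty differential $\omega \in H^0(C_{\Q_p}, \Omega^1)$ whose Coleman antiderivative vanishes on $C(\Q)$.

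\textbf{Step 3 (Residue-disk analysis).} On each residue disk of $C(\Q_p)$, the Coleman antiderivative is a convergent $p$-adic power series, and a Newton-polygon argument bounds the number of its zeros --- and hence the number of rational points --- in that disk. The residue disk(s) of the point(s) at infinity trivially contain their titular rational points; the task reduces to showing that no \emph{other} residue disk contains a rational point. This in turn becomes a pointwise condition on the reduction of $C$ modulo $p$ together with the image of $\Jac(C)(\Q)$ in $\Jac(C)(\F_p)$.

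\textbf{Step 4 (Equidistribution to finish).} The key probabilistic input is that the local $p$-adic data (the mod-$p$ reduction of $C$ together with the mod-$p$ image of $2$-Selmer) is sufficiently equidistributed across the family, so that the event ``some non-$\infty$ residue class satisfies the Chabauty vanishing condition'' has measure $\ll g \cdot 2^{-g}$. The $2^{-g}$ already appears in Step 1; the extra factor of $g$ is (essentially) the number of residue classes that must be simultaneously excluded.

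The main obstacle is Step 4: it requires a refinement of the Bhargava-Gross-Shankar-Wang average Selmer computation that is sensitive to local conditions at the chosen Chabauty prime --- essentially an equidistribution statement for the image of $2$-Selmer inside $\Jac(C)(\Q_p)/2\Jac(C)(\Q_p)$. This is the technical heart of the Poonen-Stoll and Shankar-Wang arguments; without it the average Selmer bound would control only the typical rank, not the typical location of rational points in $\Jac(C)(\Q_p)$.
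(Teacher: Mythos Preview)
The paper does not prove this theorem; it is stated purely as a cited result from \cite{poonenstoll} and \cite{shankarwang}, included only to give context for what is known in $L^0$. There is therefore no proof in the paper to compare your proposal against.

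That said, your outline is a reasonable high-level sketch of the Poonen--Stoll strategy, but a few points are imprecise. First, the argument in \cite{poonenstoll} does not quite run classical Chabauty--Coleman residue-disk-by-residue-disk as you describe in Steps~2--3; rather, the key mechanism is that a nontrivial point $P\in C(\Q)$ produces a nonzero class $[P]-[\infty]\in J(\Q)/2J(\Q)\hookrightarrow \Sel_2(J)$, and the crucial equidistribution statement (your Step~4) says that the local image of a random Selmer element in $J(\Q_2)/2J(\Q_2)$ is equidistributed. Since the image of $C(\Q_2)$ under $P\mapsto [P]-[\infty]$ occupies only a small (roughly $g$ out of $2^g$) fraction of the relevant cosets, a union bound gives the $g\,2^{-g}$. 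So the $2^{-g}$ does not come from the rank bound of Step~1 with the $g$ tacked on afterward; both factors arise together from the equidistribution-plus-image-counting step. Your Step~1 Markov bound is correct but is not really the source of the exponential decay in the actual argument.
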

Thus we have very strong bounds in $L^0$ --- improving exponentially quickly with the genus --- in all degrees.

Let us now outline the argument.

\subsection{Outline of the argument.}

Throughout $\delta > 0$ will be a (small, but still $\gg 1$) parameter, eventually taken to be $\ll 1$ --- e.g., $\delta = 10^{-10^{10}}$ will work (certainly nothing remotely this small is actually necessary, of course).

Just as in the case of integral points on elliptic curves, we break rational points on these curves into three types: \emph{small}, \emph{medium}, and \emph{large} points. By ``large'', we will mean that a point satisfies the lower bound of the Bombieri-Granville-Pintz refinement of Bombieri-Vojta \cite{bombierivojta} --- that is, for $P\in C_f(\Q)$, it will mean that $h(P) > \delta^{-1} h(f)$. The large points are bounded by an appeal to the Bombieri-Vojta proof of Faltings, which eventually will give us a bound of $\ll 1.872^{\rank(J_f(\Q))}$. We note for now and for the rest of the paper that any bound below $2^{\rank(J_f(\Q))}$ is enough to imply the theorem, since $2^{\rank(J_f(\Q))}\leq \#|\Sel_2(J_f)|$, and the latter has bounded average, by Bhargava-Gross.

Thus the problem is immediately reduced to the small and medium points. Next easiest to handle are the small points, which, in much the same way as the case of integral points on elliptic curves, we handle by switching the order of summation --- i.e. (roughly), counting how many \emph{curves} a given point can lie on, and then summing over possible points. Here we use a technique which might be called 'attraction' --- for example, if $x^5 + a$ and $x^5 + b$ are squares, say $y^2$ and $y'^2$, respectively (with $y\neq y' > 0$), and $x\in \Z$ satisfies $|x|\geq \delta^{-1} \max(|a|,|b|)^{\frac{1}{5}}$, it follows that $1\leq |y - y'| = \frac{|a-b|}{y+y'}\ll x^{\frac{5}{2}}$, whence there are $O(x^{\frac{5}{2}})$ many $c$'s such that $x^5 + c$ is a square and $|c|\ll \delta x^5$. (This is, of course, evident by other means, namely e.g.\ counting the square roots.) This sort of argument, along with some congruences that must be satisfied modulo powers of the denominator of our point (since we must deal with rational points), will be applied coefficient-by-coefficient to get a good bound on small points.

Finally, what remain are the medium points. In the case of integral points on elliptic curves, we used an explicit gap principle due to Helfgott and Silverman, which we described as an analogue of the Mumford gap principle on higher-genus curves. Here we simply use the Mumford gap principle, except of course we must be very precise with the dependence of the error terms on the curve (which is always the difficulty in these sorts of explicit questions).

Thankfully, due to work of Cassels and Flynn, given a curve of the form $C_f$, we get explicit embeddings $J_f\inj \P^{15}$ and $K_f\inj \P^3$, where $J_f := \Jac(C_f)$ as usual and $K_f := J_f/\{\pm 1\}$ again. Moreover we get an explicit multiplication-by-$2$ map $2: K_f\to K_f$ as well as an explicit addition law $J_f\times J_f\to J_f$. Finally the Abel-Jacobi map $C_f\inj J_f$ corresponding to the point at infinity is extremely simple (the composition with $J_f\surj K_f\inj \P^3$ factors through a Veronese embedding $C_f\to \P^1\inj \P^2\subseteq \P^3$). All this makes the explicit analysis of the \emph{na{\" i}ve} height of the sum (and difference) of two points quite possible.

Next we use work of Stoll \cite{stolli,stollii} to transfer the bounds we get to bounds on the \emph{canonical} height. We will need to take a detour into explicitly calculating the canonical local 'height' at $\infty$, and for good enough bounds on this we will need to introduce extra partitioning, but let us ignore this for the sake of the outline. In the end we will balance the upper bound that arises from analyzing the sum of two points (which is good enough in some cases) and that arising from the analysis of the difference (which is good enough in, in fact, all the other cases!) to obtain a sufficiently strong gap principle to provide an admissible bound on the number of medium points. The difficulty here will be that we can only explicitly count small points up to a certain threshold, while the gap principle worsens as the height of the points decreases.

The explicit gap principle will then imply that the number of medium-sized points $P\in C_f(\Q)$ with $\hat{h}(P)\in [A, (1+\delta) A], h(P)\in [B, (1+\delta) B]$, and $B\geq \const\cdot h(f)$ (for $\const$ a suitably chosen constant --- we will see that it will depend on whether or not $x(P)$ is much larger than $H(f)$, though let us ignore this), is at most $\ll 1.645^{\rank(J_f(\Q))}$. Indeed, any two points with heights in such an interval will be forced to 'repulse', i.e.\ to have large angle between them. Then, on projecting the points to the unit sphere in $C_f(\Q)\otimes_\Z \R$, it will follow that we are tasked with upper bounding the size of a spherical code with given minimal angle, which is precisely the content of the Kabatianksy-Levenshtein bound. This will complete the argument.

As a final remark, there is of course the question of generalizing the argument to higher degrees. Our analysis of small and large points carries over immediately to polynomials of arbitrary degree $d\geq 5$ (and, for the large point analysis, even to general genus $g\geq 2$ curves), but where I see no way forward is the analysis of medium points. Specifically, the existence of explicit equations embedding $J_f$ and $K_f$ into projective spaces, as well as explicit formulas for the addition law on $J_f$ and for duplication on $K_f$, \emph{as well} as explicit analysis of local height functions for the canonical height (especially as compared to those for the na{\" i}ve height) is absolutely crucial to the techniques and thus the method seems resistant to easy generalization. It seems the relevant facts about local heights may exist for genus $3$ hyperelliptic curves due to recent work of Stoll \cite{stollgenusthree}, but I cannot say yet whether this technique goes through. For yet higher degrees it seems significantly different (i.e., not so explicit) techniques are necessary.

In any case, this completes the outline of the argument. Our next step will be to collect, for reference, the definition of the final partition used in the argument.

\begin{remark}
The following section is redundant and simply collects the definitions of the various parts of the partition, and so can be skipped and used as a reference (I have included it only because the notation gets quite intricate --- the paper is independent of the section, for example).
\end{remark}

\section{The partition}

\subsection{Small, medium-sized, and large points.}
We begin by defining ``small'', ``medium-sized'', and ``large'' points. It turns out our height bounds will depend on whether or not our points have unusually large $x$-coordinates or not, principally because points with very large $x$-coordinate are very close to $\infty$ in the Archimedean topology, and so are very easy to handle with explicit formulas. In any case, let:
\begin{align*}
\I_f^\up &:= \{P\in C_f(\Q) : |x(P)|\geq \delta^{-\delta^{-1}} H(f), h(P) < (c_\up - \delta) h(f)\},\\
\I_f^\down &:= \{P\in C_f(\Q) : |x(P)| < \delta^{-\delta^{-1}} H(f), h(P) < (c_\down - \delta) h(f)\},\\
\I_f &:= \I_f^\up \cup \I_f^\down,\\
\II_f^\up &:= \{P\in C_f(\Q) : |x(P)|\geq \delta^{-\delta^{-1}} H(f), P\not\in \I_f, h(P) < \delta^{-\delta^{-1}} h(f)\},\\
\II_f^\down &:= \{P\in C_f(\Q) : |x(P)|\leq \delta^{\delta^{-1}} H(f), P\not\in \I_f, h(P) < \delta^{-\delta^{-1}} h(f)\},\\
\II_f^\bullet &:= \{P\in C_f(\Q) : \delta^{\delta^{-1}} H(f) < |x(P)| < \delta^{-\delta^{-1}} H(f), P\not\in \I_f, h(P) < \delta^{-\delta^{-1}} h(f)\},\\
\II_f &:= \II_f^\up\cup \II_f^\bullet\cup \II_f^\down,\\
\III_f &:= C_f(\Q) - (\I_f\cup \II_f),\\
c_\up &:= \frac{25}{3},\\
c_\down &:= 8.
\end{align*}

\subsection{Ensuring that lifts to $\C^2$ via $\C^2\surj J_f(\C)$ are close, and closeness to $\alpha_*$ versus $\beta_*$.}
Next, we will refine this partition for medium-sized and large points. We will first partition points so that two points in the same part have very close lifts to a fundamental domain for the lattice $\Z^2 + \tau_f\cdot \Z^2$ in $\C^2$, where $\tau_f$ is a Riemann matrix of the Jacobian of $y^2 = f(x)$ (in the Siegel fundamental domain), and we have fixed an isomorphism $\Psi_f: \C^2/(\Z^2 + \tau_f\cdot \Z^2)\simeq J_f(\C)$. (See Section \ref{lower bound on h hat of P-Q} for details.)

Write $$\Gunct := \left[-\frac{1}{2}, \frac{1}{2}\right]^{\times 2} + \tau_f\cdot \left[-\frac{1}{2},\frac{1}{2}\right]^{\times 2}.$$ Write $$\Gunct^{(i_1, \ldots, i_4)} := \left(\left[\frac{i_1}{2N}, \frac{i_1+1}{2N}\right]\times \left[\frac{i_2}{2N}, \frac{i_2+1}{2N}\right]\right) + \tau_f\cdot \left(\left[\frac{i_3}{2N}, \frac{i_3+1}{2N}\right]\times \left[\frac{i_4}{2N}, \frac{i_4+1}{2N}\right]\right),$$ where $N\asymp \delta^{-1}$ (thus this is a partition into $O(1)$ parts, since $\delta\gg 1$). Note that $$\Gunct = \bigcup_{i_1 = -N}^N\bigcup_{i_2 = -N}^N\bigcup_{i_3 = -N}^N\bigcup_{i_4 = -N}^N \Gunct^{(i_1,i_2,i_3,i_4)}.$$ Now let $$Z: J_f(\C)\to \Gunct$$ be a set-theoretic section (observe that the map $\Gunct\to \C^2/(\Z^2 + \tau_f\cdot \Z^2)\simeq J_f(\C)$ is surjective). Next let $$\II_f^{(i_1,i_2,i_3,i_4)} := \II_f\cap Z^{-1}(\Gunct^{(i_1,i_2,i_3,i_4)}).$$ (Similarly with decorations such as $\up, \down, \bullet$ added, and for $\III_f^{(i_1, \ldots, i_4)}$.) Thus if $P,Q\in \II_f^{(i_1,i_2,i_3,i_4)}$, we have that $$Z(P)-Z(Q) = A + \tau_f\cdot B$$ with $$||A||, ||B||\ll \delta.$$ Finally, recall (via Lemma \ref{producing alpha star and beta star}) that we may, and will, choose two roots $\alpha_*, \beta_*$ of $f$ such that $$|\alpha_*|, |\beta_*|, |\alpha_* - \beta_*|\gg H(f).$$ So let $$\II_f^{\alpha_*} := \{P\in \II_f : |x(P) - \alpha_*|\leq |x(P) - \beta_*|\}$$ and, similarly, $$\II_f^{\beta_*} := \{P\in \II_f : |x(P) - \beta_*|\leq |x(P) - \alpha_*|\}.$$ That is, $\II_f^{\alpha_*}$ is the set of points of $\II_f$ whose $x$-coordinates are closest to $\alpha_*$, and similarly for $\II_f^{\beta_*}$. We similarly define $\III_f^{\alpha_*}$ and $\III_f^{\beta_*}$. Finally, for $\rho\in \{\alpha_*, \beta_*\}$, define $$\II_f^{(i_1,i_2,i_3,i_4), \rho} := \II_f^{(i_1,i_2,i_3,i_4)}\cap \II_f^\rho,$$ and similarly for $\III_f$, and all other decorations.

\subsection{Ensuring that the canonical and naive heights are multiplicatively very close.}
Finally, we refine the partition once more, in order to ensure that two points in the same part have very (multiplicatively) close canonical \emph{and} naive heights.

Lemma \ref{the heights are comparable} furnishes us with constants $\mu, \nu$ with $\mu\asymp 1, \nu\asymp \delta^{-\delta^{-1}}$ such that, for all $P\in \II_f$, we have that both $\hat{h}(P)\in [\mu^{-1} h_K(P), \mu h_K(P)], h_K(P)\in [\nu^{-1} h(f), \nu h(f)]$.

Note that $$[\mu^{-1}, \mu]\subseteq \Cup_{i=-O(\delta^{-1})}^{O(\delta^{-1})} [(1+\delta)^i, (1+\delta)^{i+1}],$$ and similarly for $[\nu^{-1}, \nu]$ (except with the bounds on the union changed to $O(\delta^{-2}\log{\delta^{-1}})$). Define the following partition of $\II_f$ into $\delta^{-O(1)}$ many pieces: $$\II_f^{[i,j]} := \{P\in \II_f : \hat{h}(P)\in [(1+\delta)^i h_K(P), (1+\delta)^{i+1} h_K(P)]\text{ and } h_K(P)\in [(1+\delta)^j h(f), (1+\delta)^{j+1} h(f)]\},$$ and similarly with all other decorations added --- e.g., $$\II_f^{\up, (i_1, i_2, i_3, i_4), \rho, [i,j]} := \II_f^{\up, (i_1, i_2, i_3, i_4), \rho}\cap \II_f^{[i,j]}.$$ We also define $\III_f^{[[i]]}$, etc.\ (thus also e.g.\ $\III_f^{\bullet, (i_1, \ldots, i_4), \rho, [[i]]}$) in a similar way, except without the second condition --- that is, we only impose that $\hat{h}(P)\in [(1+\delta)^i h_K(P), (1+\delta)^{i+1} h_K(P)]$.

Thus $$\II_f = \bigcup_{\rho\in \{\alpha_*, \beta_*\}}\bigcup_{?\in \{\up,\bullet,\down\}}\bigcup_{i_1 = 0}^{O(\delta^{-1})}\cdots \bigcup_{i_4 = 0}^{O(\delta^{-1})}\bigcup_{i=-O(\delta^{-1})}^{O(\delta^{-1})}\bigcup_{j=-\delta^{-O(1)}}^{\delta^{-O(1)}} \II_f^{?,(i_1,\ldots,i_4),\rho,[i,j]},$$ and, since the partition is into $\delta^{-O(1)} = O(1)$ parts, it suffices to bound the sizes of each part individually. Similarly for $\III_f$.

Now let us prove the theorem.	

\section{Proof of Theorem \ref{the big kahuna}}
The first thing to note is that $|\Funct^{\leq T}_{\mathrm{universal}}|\asymp T^{14}$. The second thing to note is that if $\left(\frac{a}{b}, \frac{c}{d}\right)\in C_f(\Q)$ is in lowest terms, then since (by clearing denominators of the defining equation) $d^2\vert c^2 b^5$ and $b^5\vert d^2 \cdot (a^5 + (\in b\Z))$, it follows that $d^2 = b^5$ --- i.e., that all rational points on $C_f$ are of the form $\left(\frac{a}{e^2}, \frac{b}{e^5}\right)$ in lowest terms.

Now, given $f\in \Funct^{\leq T}_{\mathrm{universal}}$, write $C_f(\Q) =: \I_f \cup \II_f \cup \III_f$, with:\footnote{In the general degree $d$ case, we have that $|\Funct^{\leq T}_{\mathrm{universal}}|\asymp T^{\frac{d(d+1)}{2} - 1}$, that $c_\up = \frac{d(d+1) - 5}{3}$, and that $c_\down = \frac{d(d+1)}{3} - 2$, via precisely the same methods. (These values of $c_\up$ and $c_\down$ are easily improved upon in the general case, though I do not know precisely how far one can go.)}
\begin{align*}
\I_f^\up &:= \{P\in C_f(\Q) : |x(P)|\geq \delta^{-\delta^{-1}} H(f), h(P) < (c_\up - \delta) h(f)\},\\
\I_f^\down &:= \{P\in C_f(\Q) : |x(P)| < \delta^{-\delta^{-1}} H(f), h(P) < (c_\down - \delta) h(f)\},\\
\I_f &:= \I_f^\up \cup \I_f^\down,\\
\II_f^\up &:= \{P\in C_f(\Q) : |x(P)|\geq \delta^{-\delta^{-1}} H(f), P\not\in \I_f, h(P) < \delta^{-\delta^{-1}} h(f)\},\\
\II_f^\down &:= \{P\in C_f(\Q) : |x(P)|\leq \delta^{\delta^{-1}} H(f), P\not\in \I_f, h(P) < \delta^{-\delta^{-1}} h(f)\},\\
\II_f^\bullet &:= \{P\in C_f(\Q) : \delta^{\delta^{-1}} H(f) < |x(P)| < \delta^{-\delta^{-1}} H(f), P\not\in \I_f, h(P) < \delta^{-\delta^{-1}} h(f)\},\\
\II_f &:= \II_f^\up\cup \II_f^\bullet\cup \II_f^\down,\\
\III_f &:= C_f(\Q) - (\I_f\cup \II_f),\\
c_\up &:= \frac{25}{3},\\
c_\down &:= 8.
\end{align*}

We will call points in $\I_f$ \emph{small}, points in $\II_f$ \emph{medium}-sized, and points in $\III_f$ \emph{large}. In words, the decorations $\up, \bullet, \down$ indicate the size of the $x$-coordinates of such points, and we have broken into: large points, and small/medium-sized points with surprisingly large/surprisingly small/otherwise (the second occurring only in the case of medium points) $x$-coordinates. Let us first handle the small points.

\subsection{Small points.}
We first turn to those small points with large $x$-coordinate, i.e.\ the points in $\I_f^\up$.
\begin{lem}\label{big lookin small points}
$$\sum_{f\in \FunivT} \#|\I_f^\up|\ll T^{14 - \Omega(\delta)}.$$
\end{lem}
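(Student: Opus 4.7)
The plan is to swap the order of summation: writing each $P \in \I_f^\up$ as $(a/e^2, b/e^5)$ in lowest terms (with $(a,e) = 1$ and $e \geq 1$, as guaranteed by the $d^2 = b^5$ observation at the start of the proof of Theorem~\ref{the big kahuna}), we get
\[
\sum_{f\in\FunivT} \#|\I_f^\up| = \sum_{(a,e)} \#\bigl\{(f, b) : f\in\FunivT,\ P = (a/e^2, b/e^5) \in \I_f^\up\bigr\}.
\]
The point $P$ lies on $C_f$ iff
\[
b^2 = a^5 + a_2 a^3 e^4 + a_3 a^2 e^6 + a_4 a e^8 + a_5 e^{10},
\]
a linear equation in $(a_2, a_3, a_4, a_5)$ with coefficient vector $\mathbf{v} := (a^3 e^4, a^2 e^6, a e^8, e^{10})$. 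The defining conditions of $\I_f^\up$ give $|a_i| \leq T^i$, $|a| \leq T^{c_\up - \delta}$, and the sharper bound $|a_i| \leq (\delta^{\delta^{-1}} |a|/e^2)^i$ coming from $|x(P)|\geq \delta^{-\delta^{-1}}H(f)$.

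I will bound the inner count as (number of valid $b$) times (number of $(a_2,a_3,a_4,a_5)$ per valid $b$). The equation forces $|b^2 - a^5| \ll H(f)^2 |a|^3 e^4$ (the leading non-trivial term, since $H(f) e^2/|a|\leq \delta^{\delta^{-1}}$ makes the successive terms geometrically decreasing); so $b$ lies within $O(H(f)^2 |a|^{1/2} e^4)$ of $\pm a^{5/2}$, and the congruence $b^2 \equiv a^5 \pmod{e^4}$ (forced by $a_5\in\Z$) cuts this to $\ll 2^{\omega(e)} H(f)^2 |a|^{1/2}$ valid $b$. For the lattice-point count on the affine hyperplane $\mathbf{v}\cdot(a_2,a_3,a_4,a_5) = b^2 - a^5$ in the box $\prod_i [-H(f)^i, H(f)^i]$: since $\gcd(a,e) = 1$ one computes $\gcd(\mathbf{v}) = e^4$, and the first coordinate $a^3 e^4$ dominates ($\|\mathbf{v}\|_2 \asymp a^3 e^4$ since $|a|\gg e^2$). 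Hence the kernel sublattice of $\Z^4$ has covolume $\|\mathbf{v}\|_2/\gcd(\mathbf{v}) \asymp |a|^3$ in its $3$-dimensional ambient, while the $3$-dim slice of the box has volume $\asymp H(f)^{14}/H(f)^2 = H(f)^{12}$ (where $H(f)^2$ is the box-width in the $\mathbf{v}$-direction), so the integer-point count is $\ll H(f)^{12}/|a|^3$.

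Multiplying and applying $H(f) \leq T$ gives $\ll 2^{\omega(e)} T^{14}/|a|^{5/2}$ pairs $(f,b)$ per $(a,e)$. Summing $|a|^{-5/2}$ over $|a|\in[\delta^{-\delta^{-1}} T e^2,\, T^{c_\up-\delta}]$ produces $\ll \delta^{3\delta^{-1}/2}/(T^{3/2}e^3)$ (the convergent sum being dominated by the lower endpoint), and summing over $e$ with $\sum_{e\geq 1} 2^{\omega(e)}/e^3 \ll 1$ yields
\[
\sum_{f\in\FunivT} \#|\I_f^\up| \ll T^{25/2}\,\delta^{3\delta^{-1}/2} \ll T^{14-\Omega(\delta)},
\]
the last inequality since $25/2 < 14$ gives plenty of slack. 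The main obstacle will be the lattice-point count on the hyperplane, which rests on carefully extracting $\gcd(\mathbf{v}) = e^4$ from $\gcd(a,e) = 1$ and on identifying the box-width in the $\mathbf{v}$-direction as $\asymp H(f)^2$; a secondary technicality is the regime $|a|/e^2 < T\delta^{-\delta^{-1}}$ where the constraint $|a_i| \leq (\delta^{\delta^{-1}}|a|/e^2)^i$ binds rather than $|a_i| \leq T^i$, but replacing $H(f)$ by $\delta^{\delta^{-1}}|a|/e^2$ in the identical calculation produces the same final bound.
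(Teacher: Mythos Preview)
There is a genuine gap in your lattice-point count. Your assertion that the number of $(a_2,a_3,a_4,a_5)$ in the box $\prod_i[-T^i,T^i]$ on the affine hyperplane $\mathbf v\cdot(a_2,\ldots,a_5)=b^2-a^5$ is $\ll T^{12}/|a|^3$ drops the ``$+1$'' error: a lattice of covolume $\asymp|a|^3$ can still place one point in an arbitrarily small region. In fact, using the explicit basis $(e^2,-a,0,0),(0,e^2,-a,0),(0,0,e^2,-a)$ for the kernel lattice one reads off the correct bound
\[
\#\{(a_2,\ldots,a_5)\}\ \ll\ \Bigl(1+\tfrac{T^3}{|a|}\Bigr)\Bigl(1+\tfrac{T^4}{|a|}\Bigr)\Bigl(1+\tfrac{T^5}{|a|}\Bigr),
\]
which is \emph{not} $\ll T^{12}/|a|^3$ once $|a|\gg T^3$. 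Since $c_\up=25/3>5$, the range $|a|\in[T^5,T^{c_\up-\delta}]$ is present, and there your main term $T^{12}/|a|^3$ is $<1$ while the true count can be $1$.

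Once you restore the ``$+1$'', your bound on the number of $b$'s becomes fatal. Because you do not fix $a_2$ before counting $b$, you only get the congruence $b^2\equiv a^5\pmod{e^4}$ and an interval of length $\asymp T^2|a|^{1/2}e^4$, hence $\ll 2^{\omega(e)}T^2|a|^{1/2}$ valid $b$'s. Multiplying by the $+1$ from the lattice count and summing over $|a|\le T^{c_\up-\delta}$ and $e\ll |a|^{1/2}T^{-1/2}$ already gives a term of size $T^{3/2+2c_\up+\eps}\approx T^{18.2}$, far exceeding $T^{14}$. The paper's proof circumvents exactly this by fixing $a_2$ \emph{before} counting $t$: this upgrades the congruence to $t^2\equiv s^5+a_2s^3d^4\pmod{d^6}$ and shrinks the interval to length $\asymp T^3|s|^{-1/2}d^6$, so the $t$-count drops to $\ll T^\eps(1+T^3/|s|^{1/2})$. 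Combined with the coordinate-by-coordinate bounds $(1+T^4/|s|)(1+T^5/|s|)$ for $a_3,a_4$ (and $a_5$ determined), this is precisely what makes the final sum close on $T^{14-\Omega(\delta)}$; your claimed $T^{25/2}$ is in fact stronger than what the argument supports.
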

\begin{proof}
Certainly
\begingroup
\addtolength{\jot}{-0.2in}
\begin{align*}
\sum_{f\in \FunivT, H(f)\asymp T} \#|\I_f^\up|\leq \#|\{&(s,d,a_2,t,a_3,a_4,a_5) : t^2 = s^5 + a_2 d^4s^3 + \cdots + a_5 d^{10}, \\&|a_i|\leq T^i, \Delta_f\neq 0, (s,d) = 1, (t,d) = 1, T\ll |s|\leq T^{c_\up - \delta}, d^2\leq \delta^{\delta^{-1}} |s| T^{-1}\}|,
\end{align*}
\endgroup
and it suffices to prove the claimed bound for this sum (via a dyadic partition of size $\log{T}$ --- note that we have restricted $H(f)\asymp T$, rather than just $\leq T$).

We will say that an integer $z$ \emph{extends} the tuple $(\tilde{s}, \tilde{d}, \ldots)$ if there is an element $(s,d,a_2,t,a_3,a_4,a_5)$ in the above set agreeing in the respective entries --- i.e., $s = \tilde{s}, d = \tilde{d}, \ldots$ --- and with value $z$ in the next entry. (The tuple can be length one, or even empty, of course.) For example, given an element $(s,d,a_2,t,a_3,a_4,a_5)$ of the above set, $a_5$ extends $(s,d,a_2,t,a_3,a_4)$, $a_4$ extends $(s,d,a_2,t,a_3)$, and so on.

Given a tuple $(s,d,a_2,t,a_3,a_4)$, of course there is at most one $a_5$ that extends this tuple --- indeed, the defining equation lets us solve for $a_5 d^{10}$, and hence for $a_5$. Next, given $(s,d,a_2,t,a_3)$, if both $a_4$ and $a_4'$ extend $(s, \ldots, a_3)$ to $(s, \ldots, a_3, a_4, a_5)$ and $(s, \ldots, a_3, a_4', a_5')$, respectively, then, on taking differences of the respective defining equations, we find that $$0 = (a_4 - a_4') d^8 s + (a_5 - a_5') d^{10},$$ and so $$|a_4 - a_4'|\ll \frac{T^5 d^2}{|s|}$$ since $|a_5|, |a_5'|\ll T^5$. Moreover we also have that $0\equiv (a_4 - a_4') s d^8\pmod{d^{10}}$, i.e.\ that (since $(s,d) = 1$) $$a_4\equiv a_4'\pmod{d^2}.$$ Hence the number of $a_4$ extending the tuple $(s,d,a_2,t,a_3)$ is $$\ll 1 + \frac{T^5}{|s|}.$$ Similarly, if $a_3$ and $a_3'$ extend $(s,d,a_2,t)$, then, with similar notation as before, $$0 = (a_3 - a_3') s^2 d^6 + (a_4 - a_4') s d^8 + (a_5 - a_5') d^{10},$$ whence (here we use that $|s|\gg T d^2$) $$|a_3 - a_3'|\ll \frac{T^4 d^2}{|s|},$$ and, again, on reducing the equation modulo $d^8$ it follows that $$a_3\equiv a_3'\pmod{d^2}.$$ Thus the number of $a_3$ extending $(s,d,a_2,t)$ is $$\ll 1 + \frac{T^4}{|s|}.$$

Next, if both $t, t' > 0$ extend $(s,d,a_2)$, then $$(t - t')(t + t') = (a_3 - a_3') s^2 d^6 + (a_4 - a_4') s d^8 + (a_5 - a_5') d^{10},$$ and so $$|t - t'|\ll \frac{T^3 |s|^2 d^6}{t + t'}.$$ Since $t^2 = s^5 + a_2 s^3 d^4 + a_3 s^2 d^6 + a_4 s d^8 + a_5 d^{10}$ and $|s|\geq \delta^{-\delta^{-1}} T d^2$, it follows that, once $\delta\ll 1$ (i.e.\ once $\delta$ is sufficiently small), $t\asymp |s|^{\frac{5}{2}}$. Thus we have found that $$|t - t'|\ll T^3 |s|^{-\frac{1}{2}} d^6.$$ Moreover, since $$t^2\equiv s^5 + a_2 s^3 d^4\pmod{d^6},$$ and there are $\ll_\eps d^\eps$ many such square-roots of $s^5 + a_2 s^3 d^4$ modulo $d^6$ (there are $\ll 2^{\omega(d)}$, to be precise, since $(s,d) = 1$), it follows that any such $t$ falls into $\ll_\eps d^\eps\ll_\eps T^\eps$ congruence classes modulo $d^6$ and inside one of two (depending on sign) intervals of length $\ll T^3 |s|^{-\frac{1}{2}} d^6$, whence the number of $t$ extending $(s,d,a_2)$ is $$\ll_\eps T^\eps \left(1 + \frac{T^3}{|s|^{\frac{1}{2}}}\right).$$

Finally, since $|a_2|\ll T^2$, of course there are only $\ll T^2$ many $a_2$ extending $(s,d)$.

Hence, in sum, we have shown that (on multiplying these bounds together and summing over the possible $s$ and $d$)
\begin{align*}
\sum_{f\in \FunivT} \#|\I_f^\up|&\ll_\eps T^{2+\eps}\sum_{T\ll |s|\ll T^{c_\up - \delta}} \sum_{d\ll |s|^{\frac{1}{2}} T^{-\frac{1}{2}}} \left(1 + \frac{T^3}{|s|^{\frac{1}{2}}}\right)\left(1 + \frac{T^4}{|s|}\right)\left(1 + \frac{T^5}{|s|}\right)
\\&\ll_\eps T^{2+\eps} \sum_{T\ll |s|\ll T^{c_\up - \delta}} |s|^{\frac{1}{2}} T^{-\frac{1}{2}} + T^{\frac{5}{2}} + \frac{T^{\frac{9}{2}}}{|s|^{\frac{1}{2}}} + \frac{T^{\frac{15}{2}}}{|s|} + \frac{T^{\frac{17}{2}}}{|s|^{\frac{3}{2}}} + \frac{T^{\frac{23}{2}}}{|s|^2}
\\&\ll_\eps T^\eps\left(T^{\frac{3}{2}(c_\up - \delta) + \frac{3}{2}} + T^{(c_\up - \delta) + \frac{9}{2}} + T^{\frac{1}{2}(c_\up - \delta) + \frac{13}{2}} + T^{\frac{25}{2}}\right),
\end{align*}
and this is admissible (remember that the size of the family is $T^{14}$) since $c_\up = \frac{25}{3}$.
\end{proof}

Now we turn to those small points that do \emph{not} have such large $x$-coordinates. The ideas for this case are much the same as the ideas for the previous one.
\begin{lem}\label{the truly little guys}
$$\sum_{f\in \FunivT} \#|\I_f^\down|\ll T^{14 - \Omega(\delta)}.$$
\end{lem}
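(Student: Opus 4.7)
The strategy is directly analogous to that of the previous lemma: after a dyadic partition over $H(f) \asymp T$, we parameterize points in $\I_f^\down$ as tuples $(s, d, a_2, t, a_3, a_4, a_5)$ with $(s,d) = (t,d) = 1$, $|a_i| \leq T^i$, $d \leq T^{4 - \delta/2}$, and $|s| \leq \min(\delta^{-\delta^{-1}} T d^2, T^{8-\delta})$, satisfying
\[
t^2 = s^5 + a_2 s^3 d^4 + a_3 s^2 d^6 + a_4 s d^8 + a_5 d^{10},
\]
and extend one coordinate at a time via attraction in the same order as in the previous proof. The crucial difference is that we are now in the regime $|s| \leq \delta^{-\delta^{-1}} T d^2$ rather than $|s| \gg T d^2$, so the effective direction of attraction is \emph{congruence modulo $d^2$} (with $d^2$ playing the role that $|s|$ did previously), not confinement to a short interval.

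Running through the attraction analyses, mutatis mutandis, in this regime yields: $a_5$ extending $(s,d,a_2,t,a_3,a_4)$ is unique; $a_4$ extending $(s,d,a_2,t,a_3)$ satisfies $(a_4 - a_4')s = -(a_5 - a_5')d^2$, whence $(s,d) = 1$ gives $a_4 \equiv a_4' \pmod{d^2}$ and hence $\ll 1 + T^4/d^2$ extensions; analogously $a_3$ extending $(s,d,a_2,t)$ admits $\ll 1 + T^3/d^2$ extensions; $t$ extending $(s,d,a_2)$ satisfies $t^2 \equiv s^5 + a_2 s^3 d^4 \pmod{d^6}$, so lies in $\ll_\eps d^\eps$ classes modulo $d^6$, while the condition $|x(P)| < \delta^{-\delta^{-1}} H(f)$ yields the a priori bound $|t| \ll_\delta T^{5/2} d^5$, together giving $\ll_\eps T^\eps(1 + T^{5/2}/d)$ choices; and $a_2$ extending $(s,d)$ is bounded trivially by $\ll T^2$.

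Multiplying these counts and summing over $(s, d)$ with the prescribed ranges, split the sum at $d = T^{7/2 - \delta/2}$. For $d \geq T^{7/2 - \delta/2}$ every extension factor collapses to $\ll 1$ (since $T^4/d^2, T^{5/2}/d \ll 1$ in this range), and using $|s| \leq T^{8 - \delta}$ and the $d$ range $\leq T^{4-\delta/2}$ the contribution is
\[
\ll_\eps T^{2+\eps} \cdot T^{8 - \delta} \cdot T^{4 - \delta/2} = T^{14 - 3\delta/2 + \eps},
\]
which is the promised saving. For $d < T^{7/2 - \delta/2}$ we have $|s| \leq T d^2$, and expanding $(1 + T^{5/2}/d)(1 + T^3/d^2)(1 + T^4/d^2) \cdot T^3 d^2$ term-by-term (the $T^3 d^2$ coming from the $a_2$ and $s$ counts) and summing in $d$ gives a maximum contribution of $\ll T^{27/2 - 3\delta/2 + \eps}$, comfortably smaller than $T^{14 - \Omega(\delta)}$.

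The only potentially delicate step is the $t$-extension: in the $\I_f^\up$ case one had the strong attraction $|t - t'| \ll T^3 |s|^{-1/2} d^6$ coming from $t \asymp |s|^{5/2}$, but here there is no useful lower bound on $t$, so one must settle for the a priori upper bound $|t| \ll T^{5/2} d^5$ combined with the mod-$d^6$ congruence. What saves the argument is that in the decisive range $d \geq T^{7/2 - \delta/2}$ one has $T^{5/2} d^5 \ll d^6$, so the upper bound alone already forces $\ll_\eps T^\eps$ choices of $t$, and the remaining coordinates contribute at most $O(1)$ each.
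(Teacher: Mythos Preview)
Your proof is correct and follows essentially the same route as the paper's: the same tuple parameterization, the same order of extension with the same counts ($a_5$ unique; $a_4, a_3$ via the $\pmod{d^2}$ congruence giving $1 + T^4/d^2$ and $1 + T^3/d^2$; $t$ via the $\pmod{d^6}$ square-root congruence together with the crude bound $|t|\ll T^{5/2}d^5$; $a_2$ trivially), and the same split of the $(s,d)$-sum at $d = T^{(c_\down-1-\delta)/2} = T^{7/2-\delta/2}$. Your dominant small-$d$ term $T^{27/2-3\delta/2+\eps}$ and large-$d$ term $T^{14-3\delta/2+\eps}$ agree with the paper's leading contribution $T^{3(c_\down-\delta)/2+2+\eps}$.
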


\begin{proof}
In the same way as Lemma \ref{big lookin small points}, we write
\begingroup
\addtolength{\jot}{-0.2in}
\begin{align*}
\sum_{f\in \FunivT, H(f)\asymp T} \#|\I_f^\down|\leq \#|\{&(s,d,a_2,t,a_3,a_4,a_5) : t^2 = s^5 + a_2 d^4s^3 + \cdots + a_5 d^{10}, \\&|a_i|\leq T^i, \Delta_f\neq 0, (s,d) = 1, (t,d) = 1, |s|\leq T^{c_\down - \delta}, d^2 > \delta^{\delta^{-1}} |s| T^{-1}\}|.
\end{align*}
\endgroup

And, again, we argue by iteratively bounding the number of integers extending a tuple, where our definition of \emph{extending} is much the same as before (except the ambient set has changed). First, the number of $a_5$ extending $(s,d,a_2,t,a_3,a_4)$ is $\leq 1$, again because we can solve for $a_5 d^{10}$ and hence $a_5$ in the defining equation.

Now, if both $a_4$ and $a_4'$ extend $(s,d,a_2,t,a_3)$ to $(s,d,a_2,t,a_3,a_4,a_5)$ and $(s,d,a_2,t,a_3,a_4',a_5')$, then, on taking differences of the defining equations, we find that $$0 = (a_4 - a_4') s d^8 + (a_5 - a_5') d^{10},$$ and so $0\equiv (a_4 - a_4') s d^8\pmod{d^{10}}$ --- i.e., $$a_4\equiv a_4'\pmod{d^2},$$ in much the same way as before. This time we simply use that $|a_4|\ll T^4$ to get that the number of $a_4$ extending $(s,d,a_2,t,a_3)$ is $$\ll 1 + \frac{T^4}{d^2}$$ since if there is at least one solution, any other must lie in an interval of length $T^4$ intersected with a congruence class modulo $d^2$.

In the same way, if $a_3$ and $a_3'$ extend $(s,d,a_2,t)$, we find that $$a_3\equiv a_3'\pmod{d^2},$$ and since $|a_3|\ll T^3$ it follows that the number of $a_3$ extending $(s,d,a_2,t)$ is $$\ll 1 + \frac{T^3}{d^2}.$$

Now if $t$ extends $(s,d,a_2)$, then since $$t^2\equiv s^5 + a_2 s^3 d^4\pmod{d^6}$$ and there are $\ll_\eps d^\eps\ll_\eps T^\eps$ many square roots of an element of $(\Z/d^6)^\times$, we find that $t$ must fall in one of $\ll_\eps T^\eps$ congruence classes modulo $d^6$. Also, since $$t^2\ll T^5 d^{10}$$ (since $d^2\gg T^{-1} |s|$) via the defining equation, it follows that $$|t|\ll T^{\frac{5}{2}} d^5.$$ Hence the number of $t$ extending $(s,d,a_2)$ is $$\ll_\eps T^\eps \left(1 + \frac{T^{\frac{5}{2}}}{d}\right).$$

Finally, $|a_2|\ll T^2$ implies the number of $a_2$ extending $(s,d)$ is of course $\ll T^2$.

In sum, we have found that (here we use that we will choose $\delta\asymp 1$ in the end)
\begin{align*}
\sum_{f\in \FunivT} \#|\I_f^\down|&\ll_\eps T^{2+\eps}\sum_{d\leq T^{\frac{c_\down - \delta}{2}}}\sum_{|s|\ll \min(T^{c_\down - \delta}, \delta^{-\delta^{-1}} T d^2)} \left(1 + \frac{T^{\frac{5}{2}}}{d}\right)\left(1 + \frac{T^3}{d^2}\right)\left(1 + \frac{T^4}{d^2}\right)
\\&\ll_\eps T^{2+\eps} \sum_{T^{\frac{c_\down - 1 - \delta}{2}}\ll d\leq T^{\frac{c_\down - \delta}{2}}}\sum_{|s|\ll T^{c_\down - \delta}} \left(1 + \frac{T^{\frac{5}{2}}}{d}\right)\left(1 + \frac{T^3}{d^2}\right)\left(1 + \frac{T^4}{d^2}\right)
\\&\quad+ T^{2+\eps} \sum_{d\ll T^{\frac{c_\down - 1 - \delta}{2}}}\sum_{|s|\ll T d^2} \left(1 + \frac{T^{\frac{5}{2}}}{d}\right)\left(1 + \frac{T^3}{d^2}\right)\left(1 + \frac{T^4}{d^2}\right)
\\&\ll_\eps T^\eps \left(T^{\frac{3(c_\down - \delta)}{2} + 2} + T^{c_\down + \frac{9}{2} - \delta} + T^{\frac{19}{2}} + T^{\frac{c_\down - 1 - \delta}{2} + 7} + T^{10} + T^{\frac{25}{2}}\right),
\end{align*}
and this is admissible since $c_\down = 8$.
\end{proof}

This completes the small point counting.

\subsection{Medium points.}
We recall the notation $C_f: y^2 = f(x), J_f := \Jac(C_f), K_f := J_f/\{\pm 1\}$ for the curve, Jacobian, and Kummer variety associated to $f$. Write $\infty\in C_f(\Q)$ for the marked rational Weierstrass point on $C_f\subseteq \P^2$. Write $j: C_f\to K_f$ for the Abel-Jacobi map associated to $\infty$, and write $\kappa: C_f\to K_f$ for $j: C_f\to J_f\surj K_f$, i.e.\ $j$ composed with the canonical projection $J_f\to K_f$. We will embed $K_f\subseteq \P^3$ as in Cassels-Flynn \cite{casselsflynn}, which realizes the map $C_f\to \P^3$ as the projection $x: C_f\to \P^1$ composed with a Veronese embedding $\P^1\to \P^2\subseteq \P^3$, with image a rational normal curve in a hyperplane. Explicitly, $C_f\to K_f\to \P^3$ is the map $(x,y)\mapsto [0,1,x,x^2]$. We define, for $P\in C_f(\Qbar)$, $h(P) := h(x(P))$, the logarithmic Weil height of the $x$-coordinate of $P$. We define $h_K$ to be the pullback of the logarithmic Weil height on $\P^3$ to $K_f$ via the Cassels-Flynn embedding, and, for $P\in K_f(\Qbar)$, $\hat{h}(P) := \lim_{n\to\infty} \frac{h_K(2^n P)}{4^n}$, the canonical height on $K_f$ (or $J_f$, on pulling back via the projection). We will omit $j$ and $\kappa$ in expressions like $h_K(P+Q)$ or $h_K(P)$ for $P,Q\in C_f(\Q)$ --- of course these mean $h_K(j(P)+j(Q))$ (with the inner expression projected to $K_f$) and $h_K(\kappa(P))$, respectively, but since there will always be a unique sensible interpretation there will be no ambiguity in dropping the various embeddings.

Note that from the simple fact that, for $P\in C_f(\C)$, $\kappa(P) = [0, 1, x(P), x(P)^2]$, it follows that $h_K(\kappa(P)) = 2 h(P)$ for all $P\in C_f(\Qbar)$. Thus e.g.\ our small point counting allows us to focus only on those $P\in C_f(\Q)$ with $h_K(\kappa(P))\geq (16 - 2\delta) h(f)$, since $c_\up\geq c_\down = 8$.

Having set all this notation, let us turn to bounding the number of medium points on these curves. To do this we will establish an explicit Mumford gap principle via the explicit addition law on $J_f\inj \P^{15}$ provided by Flynn \cite{flynnsite} on his website, and then use the usual sphere packing argument to conclude the section.

\subsubsection{Upper bounds on $\hat{h}(P+Q)$.}

First let us deal with the gap principle. We will first prove upper bounds on expressions of the form $\hat{h}(P+Q)$. We split into two cases, depending on whether or not the given points have unusually (Archimedeanly) large $x$-coordinate or not.
\begin{lem}\label{the upper bound for sums of normal points}
Let $P\neq \pm Q\in C_f(\Q)$ with $|x(P)|, |x(Q)|\ll \delta^{-\delta^{-1}} H(f)$ and $h(P)\geq h(Q)$. Then: $$\hat{h}(P+Q)\leq 3h_K(P) + (5 + O(\delta)) h(f).$$
\end{lem}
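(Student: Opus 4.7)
The plan is to bound the naive height $h_K(j(P) + j(Q))$ directly via the Cassels-Flynn explicit Kummer-surface formulas, then transfer to $\hat{h}$ via Stoll's effective bound on $\hat{h} - h_K$ from \cite{stolli, stollii}.

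First, I would record the Cassels-Flynn coordinates of $j(P) + j(Q) \in K_f \subseteq \P^3$, for $P = (x_1, y_1), Q = (x_2, y_2) \in C_f(\Q)$ with $x_1 \neq x_2$ (forced by $P \neq \pm Q$):
$$\bigl[\,1 \,:\, x_1 + x_2 \,:\, x_1 x_2 \,:\, (G_f(x_1, x_2) - 2 y_1 y_2)/(x_1 - x_2)^2\,\bigr],$$
where $G_f(x_1, x_2) := 2 a_5 + a_4(x_1 + x_2) + 2 a_3 x_1 x_2 + a_2 x_1 x_2 (x_1 + x_2) + (x_1 x_2)^2 (x_1 + x_2)$ is the Cassels-Flynn symmetric biquadratic polynomial. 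Writing $P = (p_1/q_1^2, r_1/q_1^5), Q = (p_2/q_2^2, r_2/q_2^5)$ in lowest terms (as forced by the $y^2 = x^5 + \cdots$ model) and multiplying all four coordinates through by $(q_1 q_2)^2 (p_1 q_2^2 - p_2 q_1^2)^2 = (q_1 q_2)^6 (x_1 - x_2)^2$ produces an integer tuple $(N_1, N_2, N_3, N_4)$ representing the same projective point in $\P^3(\Q)$.

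Setting $A_i := H(x_i) = \max(|p_i|, q_i^2)$, I observe $(q_1 q_2)^2 \leq A_1 A_2$ and $|p_1 q_2^2 \pm p_2 q_1^2| \leq 2 A_1 A_2$, which gives $|N_1|, |N_2|, |N_3| \ll (A_1 A_2)^3$. A term-by-term bound on $(q_1 q_2)^6 G_f(x_1, x_2)$ using $|a_i| \leq H(f)^i$ yields $\ll H(f)^5 (A_1 A_2)^3$; and the hypothesis $|x_i| \leq \delta^{-\delta^{-1}} H(f)$ forces $|y_i| \leq \sqrt{|f(x_i)|} \ll \delta^{-O(\delta^{-1})} H(f)^{5/2}$, whence $|2 (q_1 q_2) r_1 r_2| \ll \delta^{-O(\delta^{-1})} H(f)^5 (A_1 A_2)^3$. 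Hence $|N_4| \ll \delta^{-O(\delta^{-1})} H(f)^5 (A_1 A_2)^3$. Taking logarithms, and using $h(P) \geq h(Q)$ (so $h(x_1) + h(x_2) \leq 2 h(x_1) = h_K(P)$, since $h_K(\kappa(P)) = 2 h(x(P))$), I get
$$h_K(j(P) + j(Q)) \,\leq\, \log \max_i |N_i| \,\leq\, 3 h_K(P) + 5 h(f) + O(\delta^{-1} \log \delta^{-1}).$$

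Finally, I would apply Stoll's effective bound $\hat{h}(R) - h_K(R) \ll h(f)$ on $K_f$ to $R = j(P) + j(Q)$ and absorb both the Stoll term and the additive $O(\delta^{-1} \log \delta^{-1})$ error into $O(\delta) h(f)$ (valid once $h(f) \gg_\delta 1$; otherwise only finitely many curves are involved and the bound is trivial). The main obstacle is the precise bookkeeping needed to pin down the constant $5$: both the top-degree term $2 a_5$ in $G_f$ and the bound $|y_1 y_2| \ll H(f)^5$ naturally contribute $H(f)^5 (A_1 A_2)^3$ to $|N_4|$, so Stoll's bound on $\hat{h} - h_K$ must be sharp enough for its $h(f)$-contribution to fit into the $O(\delta)$ slack --- this is where the specific explicit form of Stoll's effective inequality in genus $2$ from \cite{stollii} becomes essential. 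The coefficient $3$ in front of $h_K(P)$, by contrast, arises robustly from the cubic scaling factor $(A_1 A_2)^3$ and the inequality $h(x_1) + h(x_2) \leq 2 h(x_1)$.
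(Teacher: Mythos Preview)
Your bound on the na\"ive height $h_K(P+Q)\leq 3h_K(P)+5h(f)+O_\delta(1)$ is fine and matches the paper's first step. The gap is in the passage from $h_K$ to $\hat h$. You invoke Stoll's bound $\hat h(R)-h_K(R)\ll h(f)$ and then hope the implicit constant can be absorbed into the $O(\delta)$ slack. It cannot: Stoll's finite-place estimate $\sum_p|\hat\lambda_p-\lambda_p|\leq \tfrac{1}{3}\log|\Delta_f|+O(1)$ already contributes up to $\tfrac{20}{3}h(f)$, and the Archimedean telescoping contributes several more units of $h(f)$. So your argument yields $\hat h(P+Q)\leq 3h_K(P)+(5+C)h(f)$ with a fixed $C>0$, not $5+O(\delta)$. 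You flag this yourself as the ``main obstacle,'' but no version of the Stoll inequality makes $\hat h-h_K=o(h(f))$ uniformly --- the discrepancy is genuinely of size $\asymp h(f)$ in general.

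The paper's device to rescue the constant $5$ is an iterated-doubling argument exploiting weighted homogeneity. One assigns degree $(i,0)$ to $a_i$ and degree $(i,1)$ to the Kummer coordinate $k_i$; the explicit duplication polynomials $\delta_i$ are then homogeneous of degree $(12+i,4)$, so $(2^N R)_i$ has degree $(4^{N+1}-4+i,\,4^N)$. Feeding in the initial bounds $|\kappa(P+Q)_j|\ll H(f)^{5\cdot\mathbbm{1}_{j=4}}H_K(P)^3$ and maximising over monomials shows that $\tfrac{1}{4^N}h_K(2^N(P+Q))\leq 3h_K(P)+5h(f)+O_N(1)$ with the \emph{same} constant $5$ for every $N$. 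Only then is the telescoping series $\hat h-\tfrac{1}{4^N}h_K(2^N\cdot)$ invoked, and Stoll's bound together with the trivial Archimedean estimate give that this tail is $O(4^{-N}h(f))$. Choosing $N\asymp\log(\delta^{-1})$ makes it $O(\delta\,h(f))$. In short, the missing idea is to push Stoll's estimate out to generation $N$ rather than apply it at generation $0$; the weighted-degree structure of the $\delta_i$ is exactly what keeps the coefficient $5$ stable under iteration.
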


\begin{proof}
Write $P =: (X,Y) =: \left(\frac{S}{D^2}, \frac{U}{D^5}\right), Q =: (x,y) =: \left(\frac{s}{d^2}, \frac{u}{d^5}\right)$, with $(S,D) = (s,d) = 1$. By Flynn's explicit formulas, we find that: $$\kappa(P+Q) = [(X-x)^2, (X-x)^2 (X+x), (X-x)^2 X x, 2a_5 + a_4(X+x) + 2a_3 Xx + a_2 Xx (X+x) + X^2 x^2 (X+x) - 2Yy].$$ Clearing denominators, we find that:
\begin{align*}
\kappa(P+Q) = [&D^2d^2 (Sd^2-sD^2)^2, (Sd^2 - sD^2)^2 (Sd^2 + sD^2), Ss (Sd^2 - sD^2)^2, \\&2a_5 D^6d^6 + a_4 D^4d^4 (Sd^2 + sD^2), 2a_3 D^4d^4 Ss + a_2 D^2d^2 Ss (Sd^2 + sD^2) + S^2s^2 (Sd^2 + sD^2) - 2DdUu].
\end{align*}

It therefore follows that, with these affine coordinates,
\begin{align*}
|\kappa(P+Q)_1|&\ll H_K(P)^3,\\
|\kappa(P+Q)_2|&\ll H_K(P)^3,\\
|\kappa(P+Q)_3|&\ll H_K(P)^3,\\
|\kappa(P+Q)_4|&\ll H(f)^5 H_K(P)^3,
\end{align*}
since $|S|,|s|,D^2,d^2\ll H_K(P)^{\frac{1}{2}}$ and $|X|, |x|\ll H(f)$ by hypothesis.

Next, for $R = [k_1, k_2, k_3, k_4]\in K_f$, we have that $2R = [\delta_1(R), \delta_2(R), \delta_3(R), \delta_4(R)]$, where:
\begin{align*}
\delta_1(k_1,\ldots,k_4) &:= 
4 a_2^2 a_5 k_1^4 + 8 a_4^2 k_1^3 k_2 - 32 a_3 a_5 k_1^3 k_2 - 
 8 a_2 a_5 k_1^2 k_2^2 + 4 a_5 k_2^4 - 16 a_2 a_5 k_1^3 k_3 \\&- 
 4 a_2 a_4 k_1^2 k_2 k_3 - 16 a_5 k_1 k_2^2 k_3 + 4 a_4 k_2^3 k_3 + 
 16 a_5 k_1^2 k_3^2 - 8 a_4 k_1 k_2 k_3^2 + 4 a_3 k_2^2 k_3^2 + 4 a_2 k_2 k_3^3 \\&+ 
 4 a_2 a_4 k_1^3 k_4 - 32 a_5 k_1^2 k_2 k_4 - 4 a_4 k_1 k_2^2 k_4 - 
 8 a_4 k_1^2 k_3 k_4 - 8 a_3 k_1 k_2 k_3 k_4 - 4 a_2 k_1 k_3^2 k_4 + 8 k_3^3 k_4 \\&+ 
 4 a_3 k_1^2 k_4^2 - 4 k_2 k_3 k_4^2 + 4 k_1 k_4^3,\\ 
\delta_2(k_1,\ldots,k_4) &:= 
a_2 a_4^2 k_1^4 - 4 a_2 a_3 a_5 k_1^4 + 16 a_5^2 k_1^4 - 4 a_2^2 a_5 k_1^3 k_2 \\&+ 
 16 a_4 a_5 k_1^3 k_2 + 4 a_4^2 k_1^2 k_2^2 - 4 a_2 a_5 k_1 k_2^3 - 
 6 a_2^2 a_4 k_1^3 k_3 + 16 a_4^2 k_1^3 k_3 - 32 a_3 a_5 k_1^3 k_3 \\&+ 
 16 a_3 a_4 k_1^2 k_2 k_3 - 20 a_2 a_5 k_1^2 k_2 k_3 - 8 a_2 a_4 k_1 k_2^2 k_3 + 
 8 a_5 k_2^3 k_3 + 5 a_2^3 k_1^2 k_3^2 + 16 a_3^2 k_1^2 k_3^2 \\&- 
 14 a_2 a_4 k_1^2 k_3^2 - 12 a_2 a_3 k_1 k_2 k_3^2 + 32 a_5 k_1 k_2 k_3^2 + 
 4 a_4 k_2^2 k_3^2 - 6 a_2^2 k_1 k_3^3 + 16 a_4 k_1 k_3^3 + a_2 k_3^4 \\&+ 
 4 a_2 a_5 k_1^3 k_4 + 2 a_2 a_4 k_1^2 k_2 k_4 + 8 a_5 k_1 k_2^2 k_4 + 
 4 a_4 k_2^3 k_4 - 12 a_2 a_3 k_1^2 k_3 k_4 - 16 a_5 k_1^2 k_3 k_4 \\&- 
 10 a_2^2 k_1 k_2 k_3 k_4 + 16 a_4 k_1 k_2 k_3 k_4 + 8 a_3 k_2^2 k_3 k_4 + 
 16 a_3 k_1 k_3^2 k_4 + 2 a_2 k_2 k_3^2 k_4 + 4 a_4 k_1^2 k_4^2 \\&+ 
 8 a_3 k_1 k_2 k_4^2 + 5 a_2 k_2^2 k_4^2 - 8 a_2 k_1 k_3 k_4^2 + 4 k_3^2 k_4^2 + 
 4 k_2 k_4^3,\\ 
\delta_3(k_1,\ldots,k_4) &:= 
4 a_3 a_4^2 k_1^4 - 16 a_3^2 a_5 k_1^4 + 8 a_2 a_4 a_5 k_1^4 + 
 4 a_2 a_4^2 k_1^3 k_2 - 16 a_2 a_3 a_5 k_1^3 k_2 - 32 a_5^2 k_1^3 k_2 \\&- 
 8 a_4 a_5 k_1^2 k_2^2 + 4 a_4^2 k_1 k_2^3 - 16 a_3 a_5 k_1 k_2^3 - 
 8 a_2^2 a_5 k_1^3 k_3 - 16 a_4 a_5 k_1^3 k_3 - 8 a_4^2 k_1^2 k_2 k_3 \\&- 
 24 a_2 a_5 k_1 k_2^2 k_3 - 8 a_3 a_4 k_1^2 k_3^2 + 24 a_2 a_5 k_1^2 k_3^2 - 
 4 a_2 a_4 k_1 k_2 k_3^2 + 12 a_5 k_2^2 k_3^2 - 16 a_5 k_1 k_3^3 \\&+ 
 8 a_4 k_2 k_3^3 + 4 a_3 k_3^4 + 8 a_4^2 k_1^3 k_4 - 32 a_3 a_5 k_1^3 k_4 - 
 24 a_2 a_5 k_1^2 k_2 k_4 - 8 a_5 k_2^3 k_4 - 4 a_2 a_4 k_1^2 k_3 k_4 \\&- 
 24 a_5 k_1 k_2 k_3 k_4 - 4 a_4 k_2^2 k_3 k_4 - 8 a_4 k_1 k_3^2 k_4 + 
 4 a_2 k_3^3 k_4 - 12 a_5 k_1^2 k_4^2 - 4 a_4 k_1 k_2 k_4^2 + 4 k_3 k_4^3,\\ 
\delta_4(k_1,\ldots,k_4) &:= 
a_2^2 a_4^2 k_1^4 - 2 a_4^3 k_1^4 - 4 a_2^2 a_3 a_5 k_1^4 + 8 a_3 a_4 a_5 k_1^4 \\&- 
 16 a_2 a_5^2 k_1^4 - 8 a_3 a_4^2 k_1^3 k_2 + 32 a_3^2 a_5 k_1^3 k_2 - 
 16 a_2 a_4 a_5 k_1^3 k_2 - 2 a_2 a_4^2 k_1^2 k_2^2 + 8 a_2 a_3 a_5 k_1^2 k_2^2 \\&+ 
 16 a_5^2 k_1^2 k_2^2 + a_4^2 k_2^4 - 4 a_3 a_5 k_2^4 - 4 a_2 a_4^2 k_1^3 k_3 + 
 16 a_2 a_3 a_5 k_1^3 k_3 + 32 a_5^2 k_1^3 k_3 + 8 a_2^2 a_5 k_1^2 k_2 k_3 \\&+ 
 16 a_4 a_5 k_1^2 k_2 k_3 - 8 a_2 a_5 k_2^3 k_3 + 12 a_4^2 k_1^2 k_3^2 - 
 32 a_3 a_5 k_1^2 k_3^2 - 8 a_2 a_5 k_1 k_2 k_3^2 - 2 a_2 a_4 k_2^2 k_3^2 \\&- 
 4 a_2 a_4 k_1 k_3^3 - 8 a_5 k_2 k_3^3 + a_2^2 k_3^4 - 2 a_4 k_3^4 - 
 8 a_2^2 a_5 k_1^3 k_4 - 12 a_4^2 k_1^2 k_2 k_4 + 48 a_3 a_5 k_1^2 k_2 k_4 \\&+ 
 8 a_2 a_5 k_1 k_2^2 k_4 + 16 a_2 a_5 k_1^2 k_3 k_4 + 4 a_2 a_4 k_1 k_2 k_3 k_4 - 
 16 a_5 k_2^2 k_3 k_4 - 8 a_5 k_1 k_3^2 k_4 - 12 a_4 k_2 k_3^2 k_4 \\&- 
 8 a_3 k_3^3 k_4 - 2 a_2 a_4 k_1^2 k_4^2 + 8 a_5 k_1 k_2 k_4^2 - 
 2 a_2 k_3^2 k_4^2 + k_4^4.
\end{align*}

In particular, $(2R)_i = \delta_i(R)$ is a homogenous polynomial (with $\Z$ coefficients) of degree $(12+i, 4)$, where we give the $a_i$ degree $(i,0)$ and the $k_i$ degree $(i,1)$. Hence $(2^N R)_i$ is homogeneous of degree $(4^{N+1} - 4 + i, 4^N)$ under this grading, with integral coefficients (thought of as a polynomial in $\Z[a_2,\ldots,a_5,k_1,\ldots,k_4]$) of size $O_N(1)$.

It therefore follows that:
\begin{align*}
|\kappa(2^N (P+Q))_i|&\ll_N \max_{\alpha_1 + \alpha_2 + \alpha_3 + \alpha_4 = 4^N} H(f)^{4^{N+1} - 4 + i - \alpha_1 - 2\alpha_2 - 3\alpha_3 - 4\alpha_4} H_K(P)^{3\cdot 4^N} H(f)^{5\alpha_4}
\\&\ll H(f)^{4^{N+1} + 4^N} H_K(P)^{3\cdot 4^N},
\end{align*}
which is to say that (since $H_K(R)\leq \max_i |R_i|$ when all the coordinates $R_i\in \Z$) $$\frac{1}{4^N} h_K(2^N(P+Q))\leq 3h_K(P) + 5h(f) + O_N(1).$$

Now since we'd like a result about the canonical height, observe that:
\begin{align*}
\hat{h}(P+Q) - \frac{1}{4^N} h_K(2^N(P+Q)) &= \sum_{n\geq N} 4^{-n-1}\left(h_K(2^{n+1}(P+Q)) - 4 h_K(2^n (P+Q))\right)
\\&= \sum_{n\geq N} 4^{-n-1}\left(\lambda_\infty(2^{n+1}(P+Q)) - 4\lambda_\infty(2^n(P+Q))\right) \\&\quad+ \sum_p\sum_{n\geq N} 4^{-n-1}\left(\lambda_p(2^{n+1}(P+Q)) - 4\lambda_p(2^n(P+Q))\right),
\end{align*}
where $\lambda_v(R) := \max_i \log{|R_i|_v}$ are the local na{\" i}ve heights.

Now, by Stoll \cite{stollii} (see Corollary \ref{stoll's bound}), we have that $$|\lambda_p(2R) - 4\lambda_p(R)|\ll v_p(\Delta_f)\log{p}$$ (and the difference is $0$ if $v_p(\Delta_f)\leq 1$), whence this expression simplifies to
\begin{align*}
\hat{h}(P+Q) - \frac{1}{4^N} h_K(2^N(P+Q)) = \sum_{n\geq N} 4^{-n-1}\left(\lambda_\infty(2^{n+1}(P+Q)) - 4\lambda_\infty(2^n(P+Q))\right) + O(4^{-N}\log{|\Delta_f|}).
\end{align*}

Moreover, certainly (by examining the explicit expressions for $2R$ for $R\in K_f(\Q)$) $$\lambda_\infty(2R)\leq 4\lambda_\infty(R) + O(h(f)).$$ Thus we find that the full expression is $$\ll 4^{-N} h(f),$$ since $$|\Delta_f|\ll H(f)^{20}.$$

Hence we have that
\begin{align*}
\hat{h}(P+Q)&\leq \frac{1}{4^N} h_K(2^{N}(P+Q)) + O(4^{-N} h(f))
\\&\leq 3 h_K(P) + (5 + O(4^{-N})) h(f) + O_N(1).
\end{align*}
Taking $N\asymp \log{(\delta^{-1})}$ (which is $\asymp 1$) gives the result.
\end{proof}

Now let us handle the case of points with large $x$-coordinate.
\begin{lem}\label{the upper bound for sums of big points}
Let $P\neq \pm Q\in C_f(\Q)$ with $|x(P)|, |x(Q)|\gg \delta^{-\delta^{-1}} H(f)$ and $h(P)\geq h(Q)$. Then: $$\hat{h}(P+Q)\leq 3h_K(P) - h(f) + O(\delta h(f)).$$
\end{lem}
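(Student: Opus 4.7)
The plan is to run the same $2^N$-iteration argument as Lemma \ref{the upper bound for sums of normal points}, but with strictly sharper bounds on each coordinate of $\kappa(P+Q)$ that exploit the large-$x$ hypothesis. Write $P = (S/D^2, U/D^5), Q = (s/d^2, u/d^5)$ in lowest terms, and set $M := H_K(P)^3$ and $\eta := \delta^{\delta^{-1}}$. The hypothesis $|x(P)|, |x(Q)| \gg \delta^{-\delta^{-1}} H(f)$ says $D^2 \leq |S|\eta/H(f)$ and $d^2 \leq |s|\eta/H(f)$, while $h(P) \geq h(Q)$ forces $|S| \geq |s|$ (since in this regime $h(P) = \log|S|$). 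Also $|U| \ll |S|^{5/2}$ and $|u| \ll |s|^{5/2}$, since the $S^5$ term dominates $U^2 = S^5 + a_2 S^3 D^4 + \cdots$ by a factor of $\eta^{-1}$.

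Substituting these inequalities into Flynn's integer coordinates $R_1, R_2, R_3, R_4$ of $\kappa(P+Q)$ (as written down in the previous lemma), one verifies term-by-term the key sharpened estimate
$$|R_j| \ll M \cdot \left(\frac{\eta}{H(f)}\right)^{5-j} \qquad (j = 1, 2, 3, 4).$$
This is the crucial improvement over the previous lemma (where only $|R_4| \ll H(f)^5 M$ was available): now each coordinate decays in $H(f)$ at a rate perfectly calibrated to its typical $a$-degree.

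Next, apply the $2$-map $N$ times. Since $\delta_i$ is bihomogeneous of bidegree $(12+i, 4)$ under $\deg a_i = (i,0), \deg k_j = (j, 1)$, a monomial $\prod a_i^{e_i} \prod k_j^{\alpha_j}$ in $(2^N R)_i$ satisfies $\sum i e_i = 4^{N+1} - 4 + i - \sum j \alpha_j$ and $\sum \alpha_j = 4^N$. Plugging in the bounds, the $H(f)$-contribution from $\prod a_i^{e_i}$ is $H(f)^{\sum i e_i}$ while that from $\prod R_j^{\alpha_j}$ is $H(f)^{-(5 \cdot 4^N - \sum j \alpha_j)}$. Adding these, the $\sum j \alpha_j$ terms cancel exactly, so the total $H(f)$-exponent equals $-4^N - 4 + i$, \emph{independent of the monomial}. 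Together with $\eta^{5 \cdot 4^N - \sum j\alpha_j} \leq \eta^{4^N}$ (since $\eta < 1$), this gives $\max_i|(2^N R)_i| \ll_N H(f)^{-4^N + O(1)} M^{4^N} \eta^{4^N}$, hence $h_K(2^N R)/4^N \leq 3 h_K(P) - h(f) + \log\eta + O_N(4^{-N})$.

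Finally, transfer to the canonical height exactly as in Lemma \ref{the upper bound for sums of normal points}: the telescope $\hat{h}(P+Q) - h_K(2^N R)/4^N$ is bounded by $O(4^{-N} h(f))$ via Stoll's Corollary \ref{stoll's bound} (non-Archimedean part) and the trivial $\lambda_\infty(2R) \leq 4\lambda_\infty(R) + O(h(f))$ (Archimedean part). Taking $N \asymp \log\delta^{-1}$ and noting that $\log\eta = -\delta^{-1}\log\delta^{-1} < 0$ can simply be discarded yields $\hat{h}(P+Q) \leq 3 h_K(P) - h(f) + O(\delta h(f))$. The main obstacle is the combinatorial identity showing that every monomial of $(2^N R)_i$ contributes the \emph{same} $H(f)$-exponent once the refined bounds are used; without this cancellation, the worst-case monomial analysis would lose an unacceptable $O(h(f))$ factor rather than saving the clean $-h(f)$.
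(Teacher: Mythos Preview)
Your proof is correct and follows essentially the same strategy as the paper: bound the coordinates of $\kappa(P+Q)$ using the large-$x$ hypothesis, iterate via the bihomogeneity of the $\delta_i$, and transfer to $\hat{h}$ by the same Stoll/telescoping argument. The only organizational difference is that the paper works with the non-cleared rational coordinates, bounding $|\kappa(P+Q)_i|\ll \max(|X|,|x|)^{i+1}$, iterating to $|\kappa(2^N(P+Q))_i|\ll_N \max(|X|,|x|)^{5\cdot 4^N}H(f)^{-4+i}$, and then clearing denominators by $D^{6\cdot 4^N}d^{6\cdot 4^N}$ at the end before rewriting via $h_K(P)=4\log D+2\log|X|$; you instead stay with the integer coordinates throughout and encode the same saving as the decay $|R_j|\ll M(\eta/H(f))^{5-j}$. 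Your observation that the $\sum j\alpha_j$ contributions from the $a$-weight and from the $R_j$-bounds cancel exactly is precisely the mechanism behind the paper's step ``since $\max(|X|,|x|)\gg H(f)$'', just packaged differently.
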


\begin{proof}
The proof follows in the same way as the previous Lemma, except now from $$\kappa(P+Q) = [(X-x)^2, (X-x)^2 (X+x), (X-x)^2 X x, 2a_5 + a_4(X+x) + 2a_3 Xx + a_2 Xx (X+x) + X^2 x^2 (X+x) - 2Yy]$$ we see that $$|\kappa(P+Q)_i|\ll \max(|X|, |x|)^{i+1},$$ without a factor of $H(f)^5$ for $i=4$ (as we had last time), since both $|x|$ and $|X|$ are so large. Thus, in the same way as the previous Lemma, we find that $$|\kappa(2^N(P+Q))_i|\ll_N \max_{\alpha_1 + \cdots + \alpha_4 = 4^N} H(f)^{4^{N+1} - 4 + i - \alpha_1 - 2\alpha_2 - 3\alpha_3 - 4\alpha_4}\cdot \max(|X|, |x|)^{2\alpha_1 + 3\alpha_2 + 4\alpha_3 + 5\alpha_4 + 4^N}.$$ Now since $\max(|X|, |x|)\gg H(f)$, we find that therefore $$|\kappa(2^N(P+Q))_i|\ll_N \max(|X|, |x|)^{5\cdot 4^N}\cdot H(f)^{-4+i}.$$

Now since $D^6 d^6 \kappa(P+Q)_i\in \Z$, we see that $D^{6\cdot 4^N} d^{6\cdot 4^N} \kappa(2^N(P+Q))_i\in \Z$, and so $$\frac{1}{4^N} h_K(2^N(P+Q))\leq 6\log{D} + 6\log{d} + 5\log{\max(|X|, |x|)} + O_N(1) + O(4^{-N} h(f)).$$ Since $h(P) = 2\log{D} + \log{|X|}$, we see that $h_K(P) = 4\log{D} + 2\log{|X|}$, and similarly for $h_K(Q)$. Thus we find that $$\frac{1}{4^N} h_K(2^N(P+Q))\leq 3 h_K(P) - \log{\max(|X|, |x|)} + O_N(1) + O(4^{-N} h(f))\leq 3 h_K(P) - h(f) + O_N(1) + O(4^{-N} h(f)).$$ The rest of the argument (to turn this into a bound on the canonical height) is the same.
\end{proof}

This completes the upper bounding of $\hat{h}(P+Q)$ required for the gap principle. We will also need lower bounds on $\hat{h}(P-Q)$, which will require a detailed study of what we will call $\hat{\lambda}_\infty$ --- we were able to get away with not dealing with it because it is much easier to upper bound the sizes of $\delta_i(k_1,\ldots,k_4)$ than to lower bound them.

\subsubsection{Definition of local 'heights' and Stoll's bound.}

To begin with, let us define the local canonical 'height' functions.

\begin{defn}
Let $$\hat{\lambda}_v(\bullet) := \lambda_v(\bullet) + \sum_{n\geq 0} 4^{-n-1} \left(\lambda_v(2^{n+1}\bullet) - \lambda_v(2^n\bullet)\right),$$ the \emph{local canonical ``height''} at a place $v$.
\end{defn}
(Here $\lambda_v(\bullet) := \max(\log{|\bullet|_v}, 0)$.) Note that this sum converges by the inequality given in (7.1) (i.e., the treatment of the Archimedean case --- at finite primes Stoll's explicit bounds on the local height difference have already been mentioned) in \cite{stolli}. Indeed, evidently one has the upper bound $\lambda_v(2P) - \lambda_v(P)\leq O(h(f))$, and the corresponding lower bound (and thus a two-sided bound uniform in $P$) is given by (7.1). For completeness, note that the uniformity in $P$ follows from e.g.\ Formulas 10.2 and 10.3 in \cite{stolli} --- the bound only depends on the roots of $f$. It follows therefore that each term in the sum is bounded in absolute value by a constant depending only on $f$ (in fact, the constant is $\ll h(f)$ as well), whence convergence.

We have written the word height in quotes because these $\hat{\lambda}_v$ are not functions on $K_f\subseteq \P^3$, but rather on a lift (via the canonical projection $\A^4 - \{0\}\surj \P^3$) that we might (and will) call $\tilde{K}_f$, the cone on $K_f$ in $\A^4$ --- i.e.\ the subvariety of $\A^4$ defined by the same defining quartic. That is, these functions $\hat{\lambda}_v$ \emph{do} change under scaling homogeneous coordinates, but they at least do so in a controlled fashion --- indeed, so that $\sum_v \hat{\lambda}_v = \hat{h}$ remains invariant.

Now we may state Stoll's \cite{stollii} bound on the local height differences at finite primes.
\begin{thm}(Stoll, \cite{stollii})\label{stoll's theorem}
$$|\hat{\lambda}_p - \lambda_p|\leq \begin{cases} \frac{1}{3} v_p(2^4 \Delta_f) \log{p} & v_p(\Delta_f)\geq 2\\ 0 & v_p(\Delta_f)\leq 1.\end{cases}$$
\end{thm}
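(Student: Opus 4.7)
The plan is to reduce the statement to a uniform pointwise estimate on the ``doubling discrepancy'' $\lambda_p(2P) - 4\lambda_p(P)$ and then exploit Flynn's explicit doubling formulas $\delta_i$. Set $\mu_p := \lambda_p - \hat\lambda_p$ on the cone $\tilde K_f(\Q_p)$. The functional equation $\hat\lambda_p(2P) = 4\hat\lambda_p(P)$, combined with the definition of $\hat\lambda_p$ as a telescoping limit, yields
$$\mu_p(P) = -\sum_{n \geq 0} 4^{-n-1}\bigl(\lambda_p(2^{n+1}P) - 4\lambda_p(2^n P)\bigr),$$
valid once $\mu_p$ is known to be bounded (which itself follows from a uniform bound on the summands applied to $2^n P$ in place of $P$). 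Thus it suffices to establish
$$\bigl|\lambda_p(2P) - 4\lambda_p(P)\bigr| \leq v_p(2^4 \Delta_f)\, \log p$$
uniformly on the cone, vanishing identically when $v_p(\Delta_f) \leq 1$; summing the geometric series $\sum_{n\geq 0} 4^{-n-1} = \tfrac{1}{3}$ then supplies the $\tfrac{1}{3}$ in the statement.

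To prove the pointwise bound, I would normalize the $p$-adic cone representative of $P$ so that $(k_1, \ldots, k_4) \in \Z_p^4$ with $\min_i v_p(k_i) = 0$, forcing $\lambda_p(P) = 0$. The discrepancy $\lambda_p(2P) - 4\lambda_p(P)$ is then controlled by the common $p$-adic factor $e := \min_i v_p(\delta_i(P))$ of the doubled coordinates, and the task reduces to showing $e \leq v_p(2^4 \Delta_f)$, with $e = 0$ whenever $v_p(\Delta_f) \leq 1$. The key algebraic input is that the $\delta_i$ define an everywhere-defined morphism $[2]: K_f \to K_f$ on the smooth Kummer variety, so over $\overline{\Q}_p$ with $p \nmid \Delta_f$ the $\delta_1, \ldots, \delta_4$ have no common zero on $\tilde K_f \setminus \{0\}$. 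Making this effective via a Gr\"obner-basis-style computation in $\Z[a_2, \ldots, a_5, k_1, \ldots, k_4]$ should yield, for each $i$ and each large enough $M$, an identity of the form
$$\sum_j B_{ij}^{(M)}(a_*, k_*)\, \delta_j(k_*) \equiv (2^4 \Delta_f)^{c} \cdot k_i^{M} \pmod{\text{defining quartic of } \tilde K_f},$$
with $B_{ij}^{(M)} \in \Z[a_2, \ldots, a_5, k_1, \ldots, k_4]$ and some fixed exponent $c$. Evaluating at $P$ and taking $v_p$ of both sides bounds $e$ by $c \cdot v_p(2^4 \Delta_f)$, while the sharper vanishing for $v_p(\Delta_f) \leq 1$ follows from the fact that $K_f$ then has good or semistable reduction at $p$, so the doubling map extends integrally over $\Spec \Z_p$ and no common factor can appear.

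The main obstacle is arranging the exponent $c$ to come out to exactly $1$ (so that one obtains the sharp $\tfrac{1}{3}$ rather than some harmless larger multiple), and correctly handling $p = 2$ separately: Flynn's formulas for the $\delta_i$ genuinely carry $2$-adic denominators when rewritten in a more symmetric form, and this is the origin of the $2^4$ inside $v_p(2^4 \Delta_f)$. This sharp bookkeeping --- essentially a direct, finite computation with the explicit $\delta_i$ listed earlier --- is precisely the content of Stoll \cite{stollii}, which my argument would ultimately reproduce or cite.
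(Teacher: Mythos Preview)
The paper does not prove this theorem at all: it is stated as a citation from Stoll \cite{stollii} and used as a black box. So there is no ``paper's own proof'' to compare against --- the paper simply invokes the result. Your sketch is a reasonable outline of how Stoll's argument actually proceeds (telescoping plus a uniform bound on $\lambda_p(2P)-4\lambda_p(P)$ via the explicit $\delta_i$), and you correctly flag that the real work lies in obtaining the sharp exponent $c=1$ and the vanishing for $v_p(\Delta_f)\leq 1$; that case analysis is precisely what \cite{stollii} carries out and is not something a generic Gr\"obner-style identity will deliver without the finer reduction-type bookkeeping.
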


Note that this will be all we use to handle the local heights at finite places. Note also that it immediately follows that:
\begin{cor}\label{stoll's bound}
$$\sum_p |\hat{\lambda}_p - \lambda_p|\leq \frac{1}{3}\log{|\Delta_f|} + O(1).$$
\end{cor}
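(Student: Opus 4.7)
The plan is to deduce the corollary as a direct summation of the bound furnished by Theorem \ref{stoll's theorem}. Since the bound asserted there is identically zero whenever $v_p(\Delta_f)\leq 1$, the sum on the left is supported on the finite set of primes $p$ with $v_p(\Delta_f)\geq 2$, so there are no convergence issues and we may simply add the pointwise inequalities.

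Concretely, I would write
\[
\sum_p |\hat\lambda_p - \lambda_p| \;\leq\; \sum_{p:\, v_p(\Delta_f)\geq 2} \tfrac{1}{3}\, v_p(2^4\Delta_f)\log p \;=\; \tfrac{1}{3}\sum_{p:\, v_p(\Delta_f)\geq 2} \bigl(v_p(\Delta_f) + 4 v_p(2)\bigr)\log p,
\]
and then split the right-hand side into the $v_p(\Delta_f)$ piece and the $4 v_p(2)$ piece. The first piece is at most $\tfrac{1}{3}\sum_p v_p(\Delta_f)\log p = \tfrac{1}{3}\log|\Delta_f|$ by the factorization of $|\Delta_f|$ as a product of prime powers. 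The second piece is supported only at $p=2$ (since $v_p(2)=0$ otherwise), and so contributes at most $\tfrac{4}{3}\log 2 = O(1)$, whether or not $2\mid \Delta_f$.

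Combining the two contributions yields the claimed bound $\sum_p |\hat\lambda_p - \lambda_p|\leq \tfrac{1}{3}\log|\Delta_f| + O(1)$. There is no real obstacle here — the entire content of the corollary is in Stoll's theorem cited above; the corollary is just a bookkeeping step that replaces the pointwise local bound by a single global bound in terms of $\log|\Delta_f|$, which is the shape in which it will be applied in the subsequent estimates on $\hat h(P\pm Q)$.
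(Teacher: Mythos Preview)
Your proof is correct and is exactly the immediate summation the paper has in mind: the paper states the corollary as following directly from Stoll's theorem without further argument, and your computation spells out precisely that deduction. (As a minor remark, since $\Delta_f = 2^8\disc(f)$ by definition, one always has $v_2(\Delta_f)\geq 8\geq 2$, so the $p=2$ term is always present---but this does not affect your bound.)
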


\subsubsection{Analysis of $\hat{\lambda}_\infty$ and the partition at $\infty$.}

Thus we are left with studying $\hat{\lambda}_\infty$. Following Pazuki \cite{pazuki}, we will relate $\hat{\lambda}_\infty$ to a Riemann theta function plus the logarithm of a certain linear form (which is implicit in his normalization of Kummer coordinates), and then prove that the theta function term is harmless and may be ignored. Thus $\hat{\lambda}_\infty$ will be related to a linear form involving roots of our original quintic polynomial $f$, except that we will be able to choose which roots we consider. (This corresponds to translating our original point by a suitable two-torsion point.) We will then prove that there is at least one choice for which the resulting height is as desired, completing the argument.

So let $$\tau_f =: \mat{\tau_1}{\tau_{12}}{\tau_{12}}{\tau_2}\in \Sym^2(\C^2)$$ be the Riemann matrix corresponding to $J_f$ in the Siegel fundamental domain $\mathcal{F}_2$ --- that is, so that $\tau_f$ is a symmetric $2\times 2$ complex matrix whose imaginary part is positive-definite, and so that $||\Re{\tau}||\ll 1$, $$\Im{\tau_2}\geq \Im{\tau_1}\geq 2\Im{\tau_{12}} > 0,$$ and $$\Im{\tau_1}\geq \frac{\sqrt{3}}{2}.$$

Let $$\Psi_f: \C^2/(\Z^2 + \tau_f \Z^2)\simeq J_f(\C)$$ be a complex uniformization. Let $(\vec{a}, \vec{b})$ be a theta characteristic (i.e., simply $\vec{a}, \vec{b}\in \frac{1}{2}\Z^2/\Z^2$). Let $\theta_{\vec{a}, \vec{b}}: \C^2\to \C^2$ via: $$\theta_{\vec{a}, \vec{b}}(Z) := \sum_{\vec{n}\in \Z^2} e\left(\frac{1}{2}\pair{\vec{n} + \vec{a}}{\tau_f\cdot (\vec{n} + \vec{a})} + \pair{\vec{n} + \vec{a}}{Z + \vec{b}}\right),$$ where $e(z) := e^{2\pi i z}$. This is the \emph{Riemann theta function} associated to the theta characteristic $(\vec{a}, \vec{b})$.

Notice that $\theta_{\vec{a}, \vec{b}}$ is an even function if and only if $(2\vec{a})\cdot (2\vec{b})\equiv 0\pmod{2}$ --- indeed, one easily sees (via the change of variable $\vec{n}\mapsto -\vec{n} - 2\vec{a}$) that $\theta_{\vec{a},\vec{b}}(-Z) = (-1)^{4\pair{\vec{a}}{\vec{b}}} \theta_{\vec{a},\vec{b}}(Z)$. Characteristics $(\vec{a},\vec{b})$ for which $(2\vec{a})\cdot (2\vec{b})\equiv 0\pmod{2}$ are called \emph{even}, and those for which this fails are called \emph{odd}. Evidently there are exactly ten even theta characteristics, namely $\vec{a} = \vec{b} = 0$, the six more with either $\vec{a} = 0$ or $\vec{b} = 0$, the two with $\{\vec{a}, \vec{b}\} = \{(\frac{1}{2},0), (0,\frac{1}{2})\}$, and $\vec{a} = \vec{b} = (\frac{1}{2},\frac{1}{2})$. The remaining six characteristics are odd.

Given a root $\alpha$ of $f$, we will write $R_\alpha := (\alpha, 0)\in C_f(\Qbar)$ for the corresponding point on $C_f$, whose image in $J_f$ is two-torsion. If $\alpha = \infty$ we will interpret $R_\alpha = \infty$. Note that $$J_f(\C)[2] = \{0\}\cup \{[R_\alpha] - [\infty] : f(\alpha) = 0\}\cup \{[R_\alpha] + [R_\beta] - 2[\infty] : \alpha\neq \beta, f(\alpha) = f(\beta) = 0\}.$$ For ease of notation we will write $$Q_{\alpha,\beta} := [R_\alpha] + [R_\beta] - 2[\infty]\in J_f(\Qbar).$$

Let now $$\Theta := \im(j) = \im(C_f\to J_f) = \{[P] - [\infty] : P\in C_f(\C)\}\subseteq J_f(\C),$$ the \emph{theta divisor} of $C_f$ in $J_f$. Regarding this theta divisor and these $R_\alpha$ we have the following famous theorem of Riemann \cite{riemann}:
\begin{thm}[Riemann]
Let $(\vec{a}, \vec{b})$ be a theta characteristic. Then: there exists a unique $Q\in J_f(\C)[2]$ such that $\div_0(\theta_{\vec{a},\vec{b}}) = \Theta + Q$, where $\div_0$ is the divisor of zeroes. Moreover, the odd theta characteristics are exactly those that correspond to a point in the image of $C_f$ --- i.e., either $0$ or one of the form $[R_\alpha] - [\infty]$.\footnote{Indeed these are the only two-torsion points in the image of $C_f$, since otherwise the existence of a nontrivial linear equivalence $R_\alpha + R_\beta\sim P + \infty$ would imply that there is a nonconstant meromorphic function $f$ on $C_f(\C)$ with $\div_\infty(f)\leq P+\infty$, which is contradicted by Riemann-Roch ($h^0(\infty-P) = 0$ since otherwise $C_f$ would have a degree $1$ map to $\P^1$, thus have genus $0$).}
\end{thm}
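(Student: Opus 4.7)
My plan is to reduce to the classical Riemann vanishing theorem via the quasi-periodicity of theta functions, then establish the parity statement by a bijection-plus-counting argument. First I would verify that, although $\theta_{\vec{a},\vec{b}}$ does not descend to a function on $J_f(\C)=\C^2/(\Z^2+\tau_f\Z^2)$, its zero locus does: a direct computation with the defining series shows that $\theta_{\vec{a},\vec{b}}(Z+m+\tau_f n)$ differs from $\theta_{\vec{a},\vec{b}}(Z)$ by a nowhere-vanishing multiplier of the form $e(\text{affine-linear in }Z)$, so the divisor of zeroes is $(\Z^2+\tau_f\Z^2)$-invariant and descends through $\Psi_f$.

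Second, I would reduce to the case $(\vec{a},\vec{b})=(0,0)$ using the standard identity
$$\theta_{\vec{a},\vec{b}}(Z) = e\!\left(\tfrac{1}{2}\pair{\vec{a}}{\tau_f\vec{a}} + \pair{\vec{a}}{Z+\vec{b}}\right)\theta_{0,0}(Z+\vec{b}+\tau_f\vec{a}),$$
proved by reindexing $\vec{n}\mapsto \vec{n}+\vec{a}$ back to $\Z^2$. Hence $\div_0(\theta_{\vec{a},\vec{b}})$ is the translate of $\div_0(\theta_{0,0})$ by $-(\vec{b}+\tau_f\vec{a})$. The classical Riemann vanishing theorem identifies $\div_0(\theta_{0,0})=\Theta+\kappa$, where $\kappa\in J_f(\C)$ is the Riemann constant; crucially for our setup (hyperelliptic curve with the Weierstrass point $\infty$ as Abel-Jacobi basepoint), $\kappa$ lies in $J_f(\C)[2]$. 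Since $(2\vec{a},2\vec{b})\in\Z^2\times\Z^2$ makes $\vec{b}+\tau_f\vec{a}$ two-torsion in $J_f(\C)$, the point $Q:=\kappa-\vec{b}-\tau_f\vec{a}\pmod{\text{lattice}}$ is two-torsion. Uniqueness of $Q$ is immediate: $\Theta$ is a principal polarization, so $h^0(J_f,\O(\Theta))=1$ and translates of $\Theta$ are distinguished by their difference in $J_f(\C)$.

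For the parity statement, the assignment $(\vec{a},\vec{b})\mapsto Q$ is a bijection between the $16$ theta characteristics and the $16$ two-torsion points of $J_f(\C)$ (both are torsors for $\tfrac{1}{2}\Z^4/\Z^4$). By the computation $\theta_{\vec{a},\vec{b}}(-Z)=(-1)^{4\pair{\vec{a}}{\vec{b}}}\theta_{\vec{a},\vec{b}}(Z)$ (change variables $\vec{n}\mapsto -\vec{n}-2\vec{a}$ in the defining sum), a characteristic is odd iff $\theta_{\vec{a},\vec{b}}(0)=0$, which, given $\div_0(\theta_{\vec{a},\vec{b}})=\Theta+Q$, is equivalent to $-Q\in\Theta$; since $Q=-Q$ in $J_f[2]$, this is equivalent to $Q\in\Theta\cap J_f(\C)[2]$. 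The footnoted Riemann-Roch argument rules out the $Q_{\alpha,\beta}$, so $\Theta\cap J_f(\C)[2]=\{0\}\cup\{[R_\alpha]-[\infty]:f(\alpha)=0\}$, which has exactly $6$ elements (the five finite roots of $f$ plus $0$). Since there are also exactly $6$ odd theta characteristics, the bijection forces them to correspond precisely with these $6$ points.

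The main obstacle I anticipate is the input that the Riemann constant $\kappa$ is two-torsion; in general $\kappa$ is only a theta characteristic (i.e., a square root of the canonical class), and its two-torsion here is a feature of the hyperelliptic-plus-Weierstrass-basepoint hypothesis. This is, however, a standard fact in the theta-function literature (e.g., Mumford's \emph{Tata Lectures on Theta}), and modulo it the argument is a clean combination of quasi-periodicity, classical Riemann vanishing, and a cardinality match.
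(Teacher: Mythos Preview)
The paper does not prove this statement at all: it is stated as a classical result attributed to Riemann (with a citation to \cite{riemann}) and invoked as a black box. There is therefore no ``paper's own proof'' to compare your proposal against.

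That said, your sketch is a correct and standard outline of how the classical result is established, and it is tailored appropriately to the hyperelliptic setting of the paper. A few remarks:

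\textbf{The Riemann constant.} Your identification of the key input --- that the Riemann constant $\kappa$ is two-torsion when the Abel--Jacobi basepoint is a Weierstrass point --- is exactly right, and your caveat about it is well-placed. For completeness: one has $2\kappa$ equal to (minus) the image of the canonical class, and on a hyperelliptic curve of genus $g$ the canonical class is linearly equivalent to $(2g-2)[\infty]$, which maps to $0$ under Abel--Jacobi based at $\infty$. So $\kappa\in J_f(\C)[2]$ follows directly.

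\textbf{A minor logical slip.} You write ``a characteristic is odd iff $\theta_{\vec{a},\vec{b}}(0)=0$,'' but at that point in the argument only the forward implication is available (odd functions vanish at the origin); the converse would require knowing a priori that no even thetanull vanishes. Your counting argument at the end is what actually closes the loop: odd $\Rightarrow Q\in\Theta\cap J_f(\C)[2]$, both sides have cardinality $6$, and the assignment $(\vec{a},\vec{b})\mapsto Q$ is injective, hence bijective onto that set. So the conclusion is correct, but the ``iff'' should be replaced by ``implies'' and the equivalence recovered from the cardinality match. (The paper, incidentally, remarks just after stating the theorem that the nonvanishing of even thetanulls follows from $0\notin\Theta+Q_{\alpha,\beta}$, which is essentially the same observation read backwards.)
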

Here we have used the abuse of notation $\Theta + Q := \{([P] - [\infty]) + Q: P\in C_f(\C)\}$ for the translate of the theta divisor by the two-torsion point $Q$. (Note in particular that, for an even theta characteristic $(\vec{a},\vec{b})$, $\theta_{\vec{a},\vec{b}}(0)\neq 0$ since $0\not\in \Theta + Q_{\alpha,\beta}$! Indeed, otherwise we would have some $P$ for which $h^0(P+\infty) = 2$, which would be a contradiction by Riemann-Roch.)

Of course by considering cardinalities we see that the six odd characteristics are in bijection with the five roots of $f$ plus the point at infinity, and the ten even characteristics are likewise in bijection with the ten pairs of finite roots of $f$.

Note also that a natural question is which characteristic $(\vec{a}, \vec{b})$ has $\div_0(\theta_{\vec{a},\vec{b}}) = \Theta$ --- i.e., which characteristic corresponds to $0\in J_f(\C)$. The answer is given by a theorem of Mumford (see e.g.\ Theorem 5.3 in \cite{mumfordtataii}).\footnote{This is not strictly necessary for our arguments, though it is convenient to use the form of $\chi_\infty$ below.}
\begin{thm}[Mumford]
$$\div_0(\theta_{\left(\frac{1}{2}, \frac{1}{2}\right), \left(0, \frac{1}{2}\right)}) = \Theta.$$
\end{thm}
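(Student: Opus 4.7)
The plan is to reduce the statement to a two-torsion identification using Riemann's vanishing theorem (just stated) together with the translation law for Riemann theta functions with characteristics, and then to pin down the \emph{Riemann constant} $\kappa$ in the hyperelliptic model with basepoint at the Weierstrass point $\infty$.

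First I would record the translation identity
$$\theta_{\vec a, \vec b}(Z) = e\!\left(\tfrac{1}{2}\pair{\vec a}{\tau_f \vec a} + \pair{\vec a}{Z+\vec b}\right)\theta_{0,0}(Z + \vec b + \tau_f \vec a),$$
which follows from the definition of $\theta_{\vec a, \vec b}$ by reindexing $\vec n \mapsto \vec n - \vec a$; the exponential prefactor is nowhere vanishing, so $\div_0(\theta_{\vec a, \vec b}) = \div_0(\theta_{0,0}) - \Psi_f(\vec b + \tau_f \vec a)$ as divisors on $J_f(\C)$. Applying Riemann's theorem to $\theta_{0,0}$ produces the unique $\kappa \in J_f(\C)$ with $\div_0(\theta_{0,0}) = \Theta + \kappa$; because $\infty$ is a Weierstrass point, $2[\infty]$ is a canonical divisor and a standard argument (writing $\kappa$ as a Riemann vector and using the hyperelliptic involution) forces $\kappa \in J_f(\C)[2]$. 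Combining, the claim reduces to the identity
$$\kappa = \Psi_f\!\left(\left(0, \tfrac{1}{2}\right) + \tau_f\!\left(\tfrac{1}{2}, \tfrac{1}{2}\right)\right) \text{ in } J_f(\C)[2].$$

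To verify this I would compute $\kappa$ directly. Fixing a standard symplectic basis $(A_i, B_i)_{i=1,2}$ of $H_1(C_f(\C), \Z)$ adapted to a linear ordering of the six Weierstrass points $R_1, \ldots, R_5, \infty$ --- with $A_i$ encircling the pair $(R_{2i-1}, R_{2i})$ and $B_i$ threading that pair to $\infty$ --- the two-torsion points $j(R_\alpha)\in J_f(\C)$ become explicit half-periods in $\tfrac{1}{2}\Lambda/\Lambda$, $\Lambda := \Z^2 + \tau_f \Z^2$. The Riemann constant then admits the classical closed form $\kappa \equiv \sum_i j(R_{\sigma(i)}) \pmod{\Lambda}$ for a specific set of $g=2$ Weierstrass points $\sigma$ determined by the basis (the standard residue computation for hyperelliptic curves with Weierstrass basepoint). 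Either a direct evaluation in this basis, or equivalently quoting the hyperelliptic table in Theorem 5.3 of \cite{mumfordtataii}, yields precisely the claimed characteristic.

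The main obstacle is purely bookkeeping. Different normalizations of the theta function (signs, factors of $\pi$, sign of $\vec n$ in the sum) and different orderings of the Weierstrass points give characteristics that differ by the action of $\mathrm{Sp}_4(\Z/2)$ on the sixteen two-torsion points, and mismatched conventions produce the ``wrong'' member of the orbit. The cleanest way to avoid re-deriving every sign here is to adopt wholesale the conventions of \cite{mumfordtataii} and invoke Theorem 5.3 there after matching the symplectic basis above with Mumford's; the underlying computation is short but tedious.
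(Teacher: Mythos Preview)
The paper does not give its own proof of this statement: it is stated as a theorem of Mumford and attributed directly to Theorem~5.3 of \cite{mumfordtataii}, with the footnote remark that it ``is not strictly necessary for our arguments, though it is convenient to use the form of $\chi_\infty$ below.'' Your proposal is a reasonable sketch of the standard argument (translation law plus Riemann's vanishing theorem plus identification of the Riemann constant for a hyperelliptic curve with Weierstrass basepoint), and you end up invoking exactly the same reference the paper does. So there is nothing to compare: the paper simply quotes the result, and your outline is consistent with that citation.

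One small caution on your bookkeeping remark: you are right that the specific characteristic depends on the choice of symplectic basis and ordering of Weierstrass points, and the paper is implicitly fixing the conventions of \cite{mumfordtataii}. If you were to actually carry out the computation rather than cite it, you would need to verify that the symplectic basis you describe matches the one underlying the paper's uniformization $\Psi_f$, since the paper never spells this out.
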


Henceforth we will write $$\chi_\infty := \left(\left(\frac{1}{2}, \frac{1}{2}\right), \left(0, \frac{1}{2}\right)\right),$$ call our characteristics $\chi$, and, for $\chi = (\vec{a}, \vec{b})$, we will write $\chi_a := \vec{a}$ and $\chi_b := \vec{b}$. We will also write $\chi_\rho$ for the odd characteristic corresponding to the root $\rho$ of $f$. We will further write $P_\rho := j(R_\rho)$, and $\tilde{P}_\rho := (0, 1, \rho, \rho^2)\in \tilde{K}_f(\C)$ for a lift of $P_\rho$, regarded as a point of $K_f(\C)$, to $\A^4$. Let also $\ell_\rho$ be the following linear form: $$\ell_\rho(w,x,y,z) := \rho^2 w - \rho x + y.$$

Let us identify a lift of $P_\rho$ along our uniformization $\Psi_f: \C^2/(\Z^2 + \tau_f\cdot \Z^2)\simeq J_f(\C)$. Write, for $\chi$ a theta characteristic, $$\tilde{\chi} := \chi_b + \tau_f\cdot \chi_a\in \C^2.$$
\begin{lem}
$$P_\rho = \Psi_f(\tilde{\chi}_\infty + \tilde{\chi}_\rho).$$
\end{lem}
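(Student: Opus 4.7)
The plan is to pull both $\theta_{\chi_\infty}$ and $\theta_{\chi_\rho}$ back to the standard theta function $\theta := \theta_{0,0}$ via an elementary functional equation, and then read off $\Psi_f^{-1}(P_\rho)$ by comparing the two resulting descriptions of $\{\theta = 0\}$ in $\C^2/(\Z^2 + \tau_f\Z^2)$. Expanding the series defining $\theta_{\vec a,\vec b}(Z)$ and factoring out the terms depending only on $\vec a$ yields
$$\theta_{\vec a,\vec b}(Z) \;=\; e\!\left(\tfrac12\pair{\vec a}{\tau_f\vec a}+\pair{\vec a}{Z+\vec b}\right)\,\theta(Z+\tilde\chi),$$
and since the prefactor is nowhere vanishing, the zero locus of $\theta_\chi$ is precisely the translate of $\{\theta=0\}$ by $-\tilde\chi$.

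Applying this to $\chi = \chi_\infty$ and invoking Mumford's theorem gives $\{\theta=0\} = \Psi_f^{-1}(\Theta) + \tilde\chi_\infty$; applying it instead to $\chi = \chi_\rho$ and invoking Riemann's theorem (which, by the convention defining $\chi_\rho$, yields $\mathrm{div}_0(\theta_{\chi_\rho}) = \Theta + P_\rho$) gives $\{\theta=0\} = \Psi_f^{-1}(\Theta+P_\rho) + \tilde\chi_\rho = \Psi_f^{-1}(\Theta) + \Psi_f^{-1}(P_\rho) + \tilde\chi_\rho$. Equating these two descriptions, and using that distinct translates of $\Psi_f^{-1}(\Theta)$ are distinct subsets of $\C^2/(\Z^2 + \tau_f\Z^2)$ --- i.e.\ the translation stabilizer of $\Theta$ on the ppav $J_f$ is trivial --- we cancel $\Psi_f^{-1}(\Theta)$ and obtain
$$\Psi_f^{-1}(P_\rho) \;\equiv\; \tilde\chi_\infty - \tilde\chi_\rho \pmod{\Z^2 + \tau_f\Z^2}.$$

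To finish, observe that $\chi_\rho$ is a half-integer characteristic, so $2\tilde\chi_\rho = 2\chi_{\rho,b} + \tau_f\cdot 2\chi_{\rho,a} \in \Z^2 + \tau_f\Z^2$, whence $-\tilde\chi_\rho \equiv \tilde\chi_\rho$ modulo the lattice. Therefore $P_\rho = \Psi_f(\tilde\chi_\infty - \tilde\chi_\rho) = \Psi_f(\tilde\chi_\infty + \tilde\chi_\rho)$, as claimed. The most delicate step is the cancellation of $\Psi_f^{-1}(\Theta)$ in the set-theoretic equation of zero loci; this rests on the standard fact that the translation stabilizer of a principal theta divisor is trivial on a ppav (equivalently, the polarization $a\mapsto [\Theta+a]-[\Theta]\in \mathrm{Pic}^0$ is an isomorphism). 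Apart from that one appeal --- and being careful to respect the lattice ambiguity throughout --- the argument is pure bookkeeping around the theta functional equation together with the Mumford and Riemann identifications of the relevant divisors.
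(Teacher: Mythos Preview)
Your proof is correct and takes a genuinely different route from the paper's. The paper argues pointwise: it first observes that $\tilde\chi_\infty + \tilde\chi_\rho$ is a half-lattice point (hence maps to $J_f[2]$), then uses the identity $\theta_\chi(\tilde\eta) = \theta_{\chi+\eta}(0)\cdot e(\cdots)$ with $\chi\in\{\chi_\infty,\chi_\rho\}$ and $\eta = \chi_\infty+\chi_\rho$ to see that both $\theta_{\chi_\infty}$ and $\theta_{\chi_\rho}$ vanish there (since $\theta_{\chi_\infty}(0)=\theta_{\chi_\rho}(0)=0$ by oddness). Since the intersection $J_f[2]\cap\Theta\cap(\Theta+P_\rho)$ equals $\{0,P_\rho\}$, it remains only to rule out $0$, which is done by noting $\chi_\infty\neq\chi_\rho$.

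You instead work globally with divisors: the functional equation $\theta_\chi(Z)=e(\cdots)\,\theta(Z+\tilde\chi)$ lets you read off $\{\theta=0\}$ in two ways and then cancel $\Psi_f^{-1}(\Theta)$ from both sides. This is cleaner and more conceptual, but it trades the paper's elementary case-check for the (admittedly standard) fact that the theta divisor on a ppav has trivial translation stabilizer. The paper's approach avoids invoking that fact by exploiting the finiteness of $J_f[2]\cap\Theta\cap(\Theta+P_\rho)$ instead; yours avoids having to identify that intersection. Both are short, and either is perfectly acceptable here.
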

\begin{proof}
Since $K_f(\C)[2]\cap \div_0(\theta_{\chi_\rho})\cap \div_0(\theta_{\chi_\infty}) = \{0, P_\rho\}$, it suffices to show that both these theta functions vanish at this point, since were $\tilde{\chi}_\infty + \tilde{\chi}_\rho\in \Z^2 + \tau_f\cdot \Z^2$, it would follow first that $(\chi_\infty)_a = (\chi_\rho)_a$, and then that $(\chi_\infty)_b = (\chi_\rho)_b$, a contradiction. But, by explicit calculation, for $\chi, \eta$ theta characteristics, $$\theta_\chi(\tilde{\eta}) = \theta_{\chi + \eta}(0)\cdot e\left(-\frac{1}{2}\pair{\eta_a}{\tau_f\cdot \eta_a} - \pair{\eta_a}{\chi_b + \eta_b}\right).$$ Taking $\eta = \chi_\infty + \chi_\rho$ and $\chi = \chi_\rho$ or $\chi_\infty$ gives us the vanishing of both theta functions, as desired.
\end{proof}

We note here that, as is e.g.\ easily verified case-by-case, for $\chi, \chi', \chi''$ distinct odd characteristics, $\chi + \chi' + \chi''$ is even.

Next let us analyze the vanishing locus of $\ell_\rho$.

\begin{lem}
$$\div_0(\ell_\rho)\cap K_f(\C) = 2\div_0(\theta_{\chi_\rho}) = 2(\Theta + P_\rho)\pmod{\pm 1}.$$
\end{lem}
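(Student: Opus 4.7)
The plan is to show that the pullback $s := \ell_\rho \circ \kappa$ to $J_f$ (along the canonical projection $J_f\surj K_f\inj \P^3$) has divisor of zeros equal to $2(\Theta + P_\rho)$ as a divisor on $J_f$, and then descend to $K_f = J_f/\{\pm 1\}$.

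First I would record that, by Cassels-Flynn, the embedding $K_f\inj \P^3$ is the one induced by the $[-1]$-symmetric part of the complete linear system $|2\Theta|$. Consequently $s$ is a section of $\mathcal{O}_{J_f}(2\Theta)$ whose divisor is $[-1]$-symmetric, so $\div_0(s)\sim 2\Theta$.

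Second, I would verify by direct computation that $\ell_\rho|_{K_f}$ vanishes identically on the image of $\Theta + P_\rho$. A class in $\Theta + P_\rho$ is of the form $[P] + [R_\rho] - 2[\infty]$ for some $P\in C_f(\C)$, and specializing Cassels-Flynn's formulas for the Kummer coordinates of $[P_1]+[P_2]-2[\infty]$ to $P_2 = R_\rho = (\rho,0)$ yields $(k_1,k_2,k_3,k_4) = (1,\,x(P)+\rho,\,\rho\cdot x(P),\,\star)$. Then
$$\ell_\rho(1, x(P)+\rho, \rho\cdot x(P), \star) = \rho^2 - \rho(x(P)+\rho) + \rho\cdot x(P) = 0,$$
as required.

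Third, I would upgrade this set-theoretic vanishing to an inequality of divisors. The divisor $\Theta + P_\rho$ is $[-1]$-invariant on $J_f$ (since $\Theta$ is symmetric and $2P_\rho = 0$, whence $-P_\rho = P_\rho$), and it is not contained in the finite ramification locus $J_f[2]$ of the projection $\pi:J_f\surj K_f$. Away from this finite set, $\pi^*\pi(\Theta + P_\rho) = 2(\Theta + P_\rho)$ as divisors (the ``2'' appearing because $\pi$ is a degree-$2$ cover étale there). Since $s = \pi^*(\ell_\rho\vert_{K_f})$ and $\ell_\rho\vert_{K_f}$ vanishes on $\pi(\Theta + P_\rho)$ by the previous step, we conclude $\div_0(s)\geq 2(\Theta + P_\rho)$.

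Finally, I intersect with $\Theta$. Translations act trivially on cohomology, so $(\Theta + P_\rho)\cdot \Theta = \Theta^2 = 2$, giving $2(\Theta+P_\rho)\cdot \Theta = 4$; on the other hand, $\div_0(s)\sim 2\Theta$ has intersection $2\Theta\cdot \Theta = 4$ with $\Theta$. Hence the effective divisor $\div_0(s) - 2(\Theta + P_\rho)$ has intersection zero with the ample class $\Theta$, forcing it to be zero. Thus $\div_0(s) = 2(\Theta + P_\rho)$ on $J_f$; pushing down via $\pi$ gives exactly the stated identity on $K_f$. The main obstacle is the second step: invoking the correct Cassels-Flynn formula for Kummer coordinates of an unordered pair $\{P_1,P_2\}$ and carrying out the specialization cleanly, after which the remaining steps are essentially formal consequences of intersection theory on the abelian surface $J_f$.
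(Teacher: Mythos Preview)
Your second step (the Cassels--Flynn computation showing $\ell_\rho$ vanishes on the image of $\Theta + P_\rho$) is correct and is essentially the same observation the paper makes. The problem is in your third step: the claim that $\pi^*\pi(\Theta + P_\rho) = 2(\Theta + P_\rho)$ is false. Since $\Theta + P_\rho$ is an \emph{irreducible} $[-1]$-invariant divisor and $\pi$ is \'etale in codimension one, the map $\Theta + P_\rho \to \pi(\Theta + P_\rho)$ is itself generically of degree $2$, so $\pi^*\bigl(\pi(\Theta + P_\rho)\bigr) = \Theta + P_\rho$ with multiplicity \emph{one}. (You may be thinking of the projection formula $\pi_*\pi^* C = 2C$, which goes the other way.) Consequently you only get $\div_0(s)\geq \Theta + P_\rho$, and then your intersection count in step four yields $(\div_0(s) - (\Theta + P_\rho))\cdot \Theta = 4 - 2 = 2 \neq 0$, so the argument does not close.

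The paper's fix is short and worth noting: once you know $s$ vanishes on $\Theta + P_\rho$, observe that the space of sections of $\O_{J_f}(2\Theta)$ vanishing along $\Theta + P_\rho$ is $H^0\bigl(\O_{J_f}(2\Theta - (\Theta + P_\rho))\bigr)$, which is one-dimensional (it is a translate of $\O(\Theta)$ for a principal polarization). Hence $s$ is determined up to scalar by this vanishing, and since $\theta_{\chi_\rho}^2$ is another such section (by Riemann's theorem), they are proportional and $\div_0(s) = \div_0(\theta_{\chi_\rho}^2) = 2(\Theta + P_\rho)$. If you want to salvage your intersection-theoretic route instead, you would need to argue separately that the residual divisor $E := \div_0(s) - (\Theta + P_\rho)$, which is an effective $[-1]$-symmetric divisor numerically equivalent to $\Theta$ (hence a translate $\Theta + a$ with $a\in J_f[2]$), must have $a = P_\rho$; this can be done by checking nonvanishing of $\ell_\rho$ at the points $P_\gamma$ for $\gamma\neq \rho$, but at that point you are essentially reproducing the paper's uniqueness argument.
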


\begin{proof}
Notice that, from the addition law, if $P\in K_f(\C)$ is not $\infty$ or $P_\rho$, then $$\ell_\rho(\kappa(P + P_\rho)_1, \ldots, \kappa(P + P_\rho)_4) = 0.$$ The statement is also true for $P = \infty$ and $P = P_\rho$ by construction. This determines $\ell_\rho$ up to constants (and thus determines its zero divisor uniquely, since e.g.\ $\ell_\rho(P_\gamma)\neq 0$ for $\gamma\neq \rho$ a root of $f$) inside $\kappa^*\O_{\P^3}(1)(K_f)$. Now since the embedding is via $\mathcal{L}_\Theta^{\otimes 2}$, and since $\theta_{\chi_\rho}^2$, a section of this bundle, also satisfies these criteria (up to scaling by a nonzero constant), the conclusion follows from Riemann's theorem. Alternatively, one could see this by explicit computation.
\end{proof}

Now we have enough information to study $\hat{\lambda}_\infty$. For notational ease we will write $$\Xi_\chi(Z):= \theta_\chi(Z)\cdot e^{-\pi\pair{\Im{Z}}{(\Im{\tau_f})^{-1}\cdot \Im{Z}}}.$$ Note that $|\Xi_\chi|$ is invariant under translation by $\Z^2 + \tau_f\cdot \Z^2$, and thus descends to a function on $J_f(\C) - \Supp(\Theta + P_\rho)$, and even further to $K_f(\C) - \Supp(\Theta + P_\rho)$ as well, thanks to the absolute value.

\begin{lem}\label{the formula for the canonical local height}
Let $\rho$ be a root of $f$. Let $\ell_\rho(w,x,y,z) := \rho^2 w - \rho x + y$.\footnote{For the same theorem for other theta characteristics $\chi$, one has to modify the linear form $\ell_\chi$ --- for $\chi_\infty$, $$\ell_\infty(w,x,y,z) := w,$$ and, for $\chi_{\alpha,\beta}$, $$\ell_{\alpha,\beta}(w,x,y,z) := \frac{2a_5 + a_4(\alpha + \beta) + 2a_3\alpha\beta + a_2\alpha\beta(\alpha+\beta) + \alpha^2\beta^2(\alpha+\beta)}{(\alpha-\beta)^2}\cdot w + \alpha\beta\cdot x - (\alpha + \beta)\cdot y + z.$$ The modification of the rest of the theorem (replacing $P_\rho$ with $P_\alpha + P_\beta$, etc.) is straightforward.} Then there exists a constant $c_\rho\in \R$ such that the following formula holds. Let $R\in K_f(\Q) - \Supp(\Theta + P_\rho)$. Let $\tilde{R}\in \tilde{K}_f\subseteq \A^4$ be a lift of $R$ under the canonical projection $\A^4 - \{0\}\surj \P^3$. Let $Z\in [-\frac{1}{2}, \frac{1}{2}]^{\times 2} + \tau_f\cdot [-\frac{1}{2}, \frac{1}{2}]^{\times 2}\subseteq \C^2$ (a fundamental domain of $\Z^2 + \tau_f\cdot \Z^2$) be such that $\Psi_f(Z)\pmod{\pm 1} = R$ under the map $\C^2 / (\Z^2 + \tau_f\cdot \Z^2)\simeq J_f(\C)\surj K_f(\C)$. Then: $$\hat{\lambda}_\infty(\tilde{R}) = -\log{\left|\Xi_{\chi_\rho}(Z)^2\right|} + \log{|\ell_{\rho}(\tilde{R})|} + c_\rho.$$
\end{lem}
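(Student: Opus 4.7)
My plan is to compare the two sides as continuous functions on $K_f(\C)$ and invoke a Tate-style uniqueness argument. Both sides will be shown to transform identically under scaling of the lift $\tilde R$, lattice translation of $Z$, and doubling $R\mapsto 2R$; their difference will extend to a bounded continuous function on the compact space $K_f(\C)$ and satisfy $D\circ[2] - 4D \equiv \text{const}$, forcing $D$ to be a constant $c_\rho$.

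\textbf{Well-definedness.} First I would check that the RHS is well-defined on $K_f(\C) - \Supp(\Theta + P_\rho)$. The classical quasi-periodicity
$$\theta_{\chi_\rho}(Z + n + \tau_f\cdot m) = (\text{unit modulus factor})\cdot e^{-\pi i\pair{m}{\tau_f\cdot m} - 2\pi i\pair{m}{Z}}\theta_{\chi_\rho}(Z)$$
has absolute-value log equal to $\pi\pair{m}{\Im\tau_f\cdot m} + 2\pi\pair{m}{\Im Z}$, exactly compensated by the change in the prefactor $e^{-\pi\pair{\Im Z}{(\Im\tau_f)^{-1}\cdot \Im Z}}$; hence $|\Xi_{\chi_\rho}|$ descends to $J_f(\C)$. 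Since $\chi_\rho$ is odd, $\theta_{\chi_\rho}(-Z) = -\theta_{\chi_\rho}(Z)$, so $|\Xi_{\chi_\rho}|$ is even and further descends to $K_f(\C)$. Moreover $\log|\ell_\rho(\tilde R)|$ and $\hat\lambda_\infty(\tilde R)$ both shift by $\log|\lambda|$ under $\tilde R\mapsto \lambda\tilde R$ (after passing to a large enough scale that the $\log^+$'s in the iterates of $\lambda_\infty$ are just logs), so $D(R) := \hat\lambda_\infty(\tilde R) - \bigl(-\log|\Xi_{\chi_\rho}(Z)^2| + \log|\ell_\rho(\tilde R)|\bigr)$ is independent of the lift, giving a well-defined function on $K_f(\C) - \Supp(\Theta + P_\rho)$.

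\textbf{Singularity cancellation and doubling.} By Riemann's theorem $\theta_{\chi_\rho}^2$ has vanishing divisor $2(\Theta + P_\rho)$ on $J_f(\C)$, and by the previous lemma $\ell_\rho\circ\kappa$ has the same vanishing divisor $2(\Theta + P_\rho)\pmod{\pm 1}$ on $K_f$; so the two logarithmic singularities in the RHS cancel exactly, extending $D$ continuously across $\Theta + P_\rho$ to all of the compact $K_f(\C)$ (hence $D$ is bounded). For the doubling behavior, the telescoping definition of $\hat\lambda_\infty$ yields the exact identity $\hat\lambda_\infty(\widetilde{2R}) = 4\hat\lambda_\infty(\tilde R)$ under the lift $\widetilde{2R}_i := \delta_i(\tilde R)$; on the RHS, the classical Riemann duplication identity writes $\theta_{\chi_\rho}(2Z)^2$ as a degree-$4$ polynomial in the even theta functions $\theta_\eta(Z)^2$ with theta-constant coefficients, and pulling this identity through the Cassels-Flynn embedding --- whose four projective coordinates form a basis of even sections of $\mathcal{L}_\Theta^{\otimes 2}$, up to a common theta-constant factor --- identifies this RHS with precisely the $\delta_i(\tilde R)$; the exponential prefactors $e^{-\pi\pair{\Im Z}{(\Im\tau_f)^{-1}\Im Z}}$ combine multiplicatively under $Z\mapsto 2Z$. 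One concludes that $D\circ[2] - 4D$ is a specific constant, say $-3c_\rho$, whence $E := D - c_\rho$ satisfies $E\circ[2] = 4E$ with $E$ bounded on compact $K_f(\C)$; iterating $\sup|E| = 4^{-n}\sup|E\circ[2^n]| \leq 4^{-n}\sup|E|$ forces $E\equiv 0$, i.e., $D\equiv c_\rho$.

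\textbf{Main obstacle.} The heart of the proof is the explicit matching of the Cassels-Flynn duplication polynomials $\delta_i$ with the Riemann duplication identity for $\theta_{\chi_\rho}^2$, including all theta-constant normalizations. This is classical in the analytic theory of principally polarized abelian surfaces but intricate in the explicit Cassels-Flynn coordinate system; rather than redo it from scratch, one can invoke Pazuki's \cite{pazuki} explicit formulation in the genus-two Kummer setting, which packages precisely this identification and from which the statement of the lemma can be read off directly.
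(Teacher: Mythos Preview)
Your overall strategy---a Tate-style uniqueness argument showing the difference $D$ satisfies $D\circ[2]-4D=\text{const}$ and is bounded, hence constant---is exactly the paper's approach. The structure is right.

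However, you have overcomplicated the central step, and in doing so missed the paper's main simplification. You propose to establish $D\circ[2]-4D=\text{const}$ by explicitly matching the Cassels--Flynn duplication polynomials $\delta_i$ against Riemann's duplication formula for $\theta_{\chi_\rho}(2Z)^2$, and you flag this as the ``main obstacle,'' deferring to Pazuki. The paper avoids this computation entirely: it simply observes that both
\[
\frac{\ell_\rho(2\tilde R)}{\ell_\rho(\tilde R)^4}\quad\text{and}\quad\frac{\theta_{\chi_\rho}(2Z)^2}{\theta_{\chi_\rho}(Z)^8}
\]
are meromorphic functions on $K_f$ with the \emph{same} divisor, namely $[2]^*(2(\Theta+P_\rho)) - 4\cdot 2(\Theta+P_\rho)$. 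Two meromorphic functions on a projective variety with the same divisor differ by a nonzero multiplicative constant, so their logs differ by an additive constant---done. No explicit duplication-formula matching is needed; the previous lemma identifying $\div_0(\ell_\rho)\cap K_f = 2(\Theta+P_\rho)$ together with Riemann's theorem for $\theta_{\chi_\rho}$ already gives both divisors.

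A secondary difference: you extend $D$ continuously to all of $K_f(\C)$ via singularity cancellation and then use compactness to get boundedness. The paper instead works on the dense $[2]$-backward-orbit complement of $\Theta+P_\rho$ and uses a dynamical argument (find $n_i$ with $2^{n_i}P$ convergent in the good set). Your route here is arguably cleaner, but note that justifying the continuous extension rigorously again requires matching the orders of vanishing of $\ell_\rho$ and $\theta_{\chi_\rho}^2$ along $\Theta+P_\rho$---i.e., the same divisor comparison that resolves your ``main obstacle.'' So once you adopt the divisor argument, both steps collapse to one line each.
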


\begin{proof}
Note that $\hat{\lambda}_\infty(\tilde{R}) - \log{|\ell_\rho(\tilde{R})|}$ is invariant under scaling, whence it descends to a function on $K_f(\C) - \Supp(\Theta + P_\rho)$. Now note that $$\left[\hat{\lambda}_\infty(2\tilde{R}) - \log{|\ell_\rho(2\tilde{R})|}\right] - 4\left[\hat{\lambda}_\infty(\tilde{R}) - \log{|\ell_\rho(\tilde{R})|}\right] = -\log{\left|\frac{\ell_\rho(2\tilde{R})}{\ell_\rho(\tilde{R})^4}\right|}$$ by definition of $\hat{\lambda}_\infty$.

Now since $$\left[-\log{\left|\Xi_{\chi_\rho}(2Z)^2\right|}\right] - 4\left[-\log{\left|\Xi_{\chi_\rho}(Z)^2\right|}\right] = -\log{\left|\frac{\theta_{\chi_\rho}(2Z)^2}{\theta_{\chi_\rho}(Z)^8}\right|},$$ and in both cases the right-hand sides are of the form $-\log{|G|}$ with $G$ a function on $K_f$ with $\div_0(G) = [2\cdot]^*(2(\Theta + P_\rho)) - 4\cdot (2(\Theta + P_\rho))$, and both $\hat{\lambda}_\infty(\tilde{R}) - \log{|\ell_\rho(\tilde{R})|}$ and $-\log{\left|\Xi_{\chi_\rho}(Z)^2\right|}$ are defined on $K_f(\C) - \Supp(\Theta + P_\rho)$, it follows that the two functions must differ by a constant.\footnote{For a more explicit way to see this, see the attached Mathematica document. The point is that, via Yoshitomi's \cite{yoshitomi} formulas (stated in Grant \cite{grant} and originally from H.F.\ Baker's 1907 book, \cite{hfbaker}), one can express the quotient of theta functions considered above in terms of the $x$- and $y$-coordinates of the corresponding points in the corresponding divisor in the Jacobian, and now one is comparing two rational functions of $x$- and $y$-coordinates. The Mathematica document does this in the case of $\chi = \chi_\infty$ --- the other cases are obtained by translating by the corresponding two-torsion point.} Indeed, since e.g.\ by the Baire category theorem $K_f(\C) - \bigcup_{k\in \Z} [2^k\cdot]^{-1}(\Supp(\Theta + P_\rho))$ is topologically dense (in the Archimedean topology), for $P$ any element of this set we have seen that, writing $F(P) := \hat{\lambda}_\infty(\tilde{P}) - \log{|\ell_\rho(\tilde{P})|} + \log{\left|\Xi_{\chi_\rho}(\Psi_f^{-1}(P))^2\right|}$, $$F(2^{n+1}P) - 4 F(2^n P)$$ is independent of $P$ and $n$, since we have seen that this difference must be constant (since two meromorphic functions with the same divisor of zeroes must differ by a multiplicative constant).

Hence $F(P) = \frac{1}{4^n} F(2^n P) + O_F(4^{-n})$ for all $P$ in this set. By choosing a sequence $n_i$ such that $2^{n_i} P$ converges to a point in $K_f(\C) - \bigcup_{k\in \Z} [2^k\cdot]^{-1}(\Supp(\Theta + P_\rho))$ in the Archimedean topology (one exists since otherwise $P$ must be a torsion point, else its multiples would be dense in a nontrivial abelian subvariety of $K_f(\C)$, whence in particular there would be a subsequence converging to some $2^N P$ with $N$ sufficiently large. But all two-power torsion points are excluded, and any other torsion points will become, after multiplying by a suitably high power of $2$, odd order. But an odd order torsion point has periodic orbit under the multiplication by $2$ map.), we see that $F(P) = 0$, and so, by continuity, we see that in fact $F = 0$ on the whole of $K_f(\C) - \Supp(\Theta + P_\rho)$, as desired.
\end{proof}

Let us apply this to the case of $\chi_\rho$. We find then that:
\begin{cor}\label{first lower bound on c beta}
Let $\alpha\neq \beta$ be roots of $f$. Then: $$c_\beta = 2\log{|\theta_{\chi_\beta + \chi_\infty + \chi_\alpha}(0)|} + \log{\frac{|f'(\alpha)|^{\frac{1}{2}}}{|\alpha - \beta|}}.$$
\end{cor}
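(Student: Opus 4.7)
The plan is to apply the identity of Lemma \ref{the formula for the canonical local height} with $\rho = \beta$ at the two-torsion point $R = P_\alpha$ (for $\alpha \neq \beta$ a root of $f$), using the explicit lift $\tilde{P}_\alpha = (0, 1, \alpha, \alpha^2) \in \tilde{K}_f$ and its preimage $Z = \tilde{\chi}_\infty + \tilde{\chi}_\alpha \in \C^2$ identified in the proof of that lemma. Then $c_\beta$ is determined by evaluating, in closed form, the three remaining terms $\hat{\lambda}_\infty(\tilde{P}_\alpha)$, $\log|\ell_\beta(\tilde{P}_\alpha)|$, and $-\log|\Xi_{\chi_\beta}(Z)^2|$.

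The first two are direct. Plugging into the definition of $\ell_\beta$ gives $\ell_\beta(\tilde{P}_\alpha) = \alpha - \beta$, so that term is $\log|\alpha - \beta|$. For the theta-function term, apply the translation identity $\theta_{\chi_\beta}(\tilde{\eta}) = \theta_{\chi_\beta + \eta}(0)\cdot e\!\left(-\tfrac{1}{2}\pair{\eta_a}{\tau_f\cdot \eta_a} - \pair{\eta_a}{(\chi_\beta)_b + \eta_b}\right)$ with $\eta := \chi_\infty + \chi_\alpha$ (the identity already invoked in the proof of the previous lemma). Since $\eta_a, \eta_b, (\chi_\beta)_b$ are all real, the factor $e(-\pair{\eta_a}{(\chi_\beta)_b + \eta_b})$ has modulus $1$, while $|e(-\tfrac{1}{2}\pair{\eta_a}{\tau_f\cdot \eta_a})| = e^{\pi\pair{\eta_a}{\Im\tau_f\cdot \eta_a}}$. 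Since $\Im Z = \Im\tau_f\cdot \eta_a$ and $\Im\tau_f$ is symmetric, one has $\pair{\Im Z}{(\Im\tau_f)^{-1}\Im Z} = \pair{\eta_a}{\Im\tau_f\cdot \eta_a}$, so the weight factor $e^{-\pi\pair{\Im Z}{(\Im\tau_f)^{-1}\Im Z}}$ in $\Xi_{\chi_\beta}$ exactly cancels the exponential, yielding $|\Xi_{\chi_\beta}(Z)| = |\theta_{\chi_\beta + \chi_\infty + \chi_\alpha}(0)|$.

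The main computation is $\hat{\lambda}_\infty(\tilde{P}_\alpha)$, which I would extract directly from the explicit duplication polynomials $\delta_1, \ldots, \delta_4$ listed in the proof of Lemma \ref{the upper bound for sums of normal points}. Since $2 P_\alpha = \infty$ on $K_f$, one expects the first three coordinates of $\delta(\tilde{P}_\alpha)$ to vanish: substituting $(0, 1, \alpha, \alpha^2)$ into $\delta_1, \delta_2, \delta_3$ gives (before using $f(\alpha) = 0$) the expressions $4 f(\alpha)$, $8\alpha f(\alpha)$, and $4\alpha^2 f(\alpha)$ respectively, which all vanish. For $\delta_4$, the same substitution followed by the reductions $\alpha^k = \alpha^{k-5}\cdot(-a_2\alpha^3 - a_3\alpha^2 - a_4\alpha - a_5)$ for $5\leq k\leq 8$ yields exactly $f'(\alpha)^2$. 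Hence $2\tilde{P}_\alpha = (0, 0, 0, f'(\alpha)^2)$, and since the only $k_4^4$-pure monomial across all $\delta_i$ lies in $\delta_4$ (with coefficient $1$), we have $\delta(0, 0, 0, c) = (0, 0, 0, c^4)$, which on iteration gives $2^n\tilde{P}_\alpha = (0, 0, 0, f'(\alpha)^{2\cdot 4^{n-1}})$ for all $n\geq 1$. Therefore $\hat{\lambda}_\infty(\tilde{P}_\alpha) = \lim_{n\to\infty} 4^{-n}\lambda_\infty(2^n\tilde{P}_\alpha) = \tfrac{1}{2}\log|f'(\alpha)|$.

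Substituting these three values into the identity $\hat{\lambda}_\infty(\tilde{P}_\alpha) = -\log|\Xi_{\chi_\beta}(Z)^2| + \log|\ell_\beta(\tilde{P}_\alpha)| + c_\beta$ and solving for $c_\beta$ reproduces the claimed formula. The main obstacle is the lengthy but mechanical verification that $\delta_4(\tilde{P}_\alpha) = f'(\alpha)^2$ in $\Z[a_2, \ldots, a_5, \alpha]/(f(\alpha))$, together with the vanishing of $\delta_1, \delta_2, \delta_3$ at $\tilde{P}_\alpha$; everything else is either immediate or a mild adaptation of steps already carried out in the proof of Lemma \ref{the formula for the canonical local height}.
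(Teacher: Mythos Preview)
Your proof is correct and follows essentially the same approach as the paper: apply Lemma \ref{the formula for the canonical local height} with $\rho=\beta$ at $\tilde{P}_\alpha$, compute $\hat{\lambda}_\infty(\tilde{P}_\alpha)=\tfrac{1}{2}\log|f'(\alpha)|$ from the explicit duplication formulas giving $2\tilde{P}_\alpha=(0,0,0,f'(\alpha)^2)$, and use the translation identity for theta functions to reduce $|\Xi_{\chi_\beta}(Z)|$ to $|\theta_{\chi_\beta+\chi_\infty+\chi_\alpha}(0)|$. You have simply unpacked several steps (the cancellation of the exponential in $\Xi$, the vanishing of $\delta_1,\delta_2,\delta_3$ at $\tilde{P}_\alpha$ as multiples of $f(\alpha)$, and the iteration $\delta(0,0,0,c)=(0,0,0,c^4)$) that the paper leaves implicit.
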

\begin{proof}
Apply Lemma \ref{the formula for the canonical local height} to the point $P_\alpha$ and note that $$\hat{\lambda}_\infty(\tilde{P}_\alpha) = \frac{1}{4}\hat{\lambda}_\infty(2\tilde{P}_\alpha) = \frac{1}{2}\log{|f'(\alpha)|},$$ since $$(\delta_1(\tilde{P}_\alpha), \ldots, \delta_4(\tilde{P}_\alpha)) = (0,0,0,f'(\alpha)^2).$$ Finally, we use the already-used fact that, for $\chi, \eta$ theta characteristics, $$\theta_\chi(\tilde{\eta}) = \theta_{\chi + \eta}(0)\cdot e\left(-\frac{1}{2}\pair{\eta_a}{\tau_f\cdot \eta_a} - \pair{\eta_a}{\chi_b + \eta_b}\right).$$
\end{proof}

Let us note here that the (extremely nonobvious) constancy of the right-hand side in $\alpha$ amounts essentially to Thomae's formula for the theta constants of this curve(!).

Thus we have an expression for the canonical local height at infinity. We will only use a crude lower bound\footnote{In fact $\max_{\alpha\neq \beta: f(\alpha) = f(\beta) = 0} \frac{|f'(\alpha)|^{\frac{1}{2}}}{|\alpha - \beta|}\gg H(f)$. To see this, take $\alpha$ to be a root with maximal absolute value, and $\beta$ to be the root closest to $\alpha$. Since $5\alpha = \sum_{\rho\neq \alpha : f(\rho) = 0} \alpha - \rho$, it follows that $5|\alpha|\leq \sum_{\rho\neq \alpha : f(\rho) = 0} |\alpha - \rho|$. Since each $|\alpha - \rho|\leq 2|\alpha|$, at least three of the $\rho$ (namely, all the other roots besides $\alpha$ and $\beta$) must satisfy $|\alpha - \rho|\gg |\alpha|$. Since $|\alpha|\asymp H(f)$, it follows that $\frac{|f'(\alpha)|^{\frac{1}{2}}}{|\alpha - \beta|}\gg H(f)^{\frac{3}{2}}\cdot |\alpha - \beta|^{-\frac{1}{2}}$, and the claim follows.} for the last term, so let us get rid of it now:
\begin{lem}\label{the second lower bound on c beta}
Let $\alpha\neq \beta$ be roots of $f$. Then: $$c_\beta\geq 2\log{|\theta_{\chi_\beta + \chi_\infty + \chi_\alpha}(0)|} + \frac{1}{4}\log{|\Delta_f|} - 4 h(f) - O(1).$$
\end{lem}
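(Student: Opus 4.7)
The plan is to simply combine Corollary \ref{first lower bound on c beta} with elementary bounds on $|f'(\alpha)|$ coming from the definition of the discriminant. Subtracting the theta-constant term from both sides of the claim, it suffices to show that for any distinct roots $\alpha,\beta$ of $f$ we have
$$\log\frac{|f'(\alpha)|^{\frac{1}{2}}}{|\alpha-\beta|} \geq \frac{1}{4}\log|\Delta_f| - 4h(f) - O(1).$$
So the task is really just to lower bound $|f'(\alpha)|$ by (roughly) $|\disc(f)|^{1/4}$ and upper bound $|\alpha-\beta|$ by $H(f)$, up to bounded multiplicative error.

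The upper bound on $|\alpha-\beta|$ is standard: a routine absolute-value argument applied to $\gamma^5 = -(a_2\gamma^3+\cdots+a_5)$ for any root $\gamma$ of $f(x)=x^5+a_2x^3+\cdots+a_5$ shows $|\gamma|\leq 2H(f)$, so that $|\alpha-\beta|\leq 4H(f)$, and more generally $|\gamma-\gamma'|\leq 4H(f)$ for any two roots. For the lower bound on $|f'(\alpha)|$, I would use the factorization $\disc(f)=\prod_{i<j}(\gamma_i-\gamma_j)^2$ (for monic $f$) and partition the ten unordered pairs of roots into the four pairs containing $\alpha$ and the six pairs not containing $\alpha$. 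The product over the first four pairs of $|\gamma_i-\gamma_j|^2$ is exactly $|f'(\alpha)|^2$, and the product over the remaining six pairs of $|\gamma_i-\gamma_j|^2$ is bounded above by $(4H(f))^{12}$. Thus
$$|f'(\alpha)|^2 = \frac{|\disc(f)|}{\prod_{\{\gamma,\gamma'\}\not\ni \alpha}|\gamma-\gamma'|^2}\geq \frac{|\disc(f)|}{(4H(f))^{12}},$$
so $|f'(\alpha)|^{1/2}\geq |\disc(f)|^{1/4}/(4H(f))^3$.

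Combining, $\log\frac{|f'(\alpha)|^{1/2}}{|\alpha-\beta|} \geq \frac{1}{4}\log|\disc(f)| - 4\log(4H(f))$. Since $|\Delta_f|=2^8|\disc(f)|$, we have $\frac{1}{4}\log|\disc(f)|=\frac{1}{4}\log|\Delta_f| - 2\log 2$, and $4\log(4H(f))=4h(f)+8\log 2$. Absorbing the constants into the $O(1)$ and inserting via Corollary \ref{first lower bound on c beta} yields exactly the claimed bound. I expect no real obstacle here; the only care needed is in the constant bookkeeping, and in recognizing that the crude bound $|\gamma-\gamma'|\leq 4H(f)$ is all one can say pointwise in $\alpha$ (finer choices of $\alpha$, as hinted at in the footnote before the statement, would improve the constant, but this crude bound is all that is needed since the lemma is stated uniformly in $\alpha$).
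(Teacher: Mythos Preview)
Your proposal is correct and follows essentially the same route as the paper's proof: both start from Corollary \ref{first lower bound on c beta}, bound $|\alpha-\beta|\ll H(f)$ via the elementary root bound, and lower bound $|f'(\alpha)|$ by factoring the discriminant as $|f'(\alpha)|^2$ times the product of $|\rho-\rho'|^2$ over the six pairs not containing $\alpha$, each of which is $\ll H(f)^2$. Your version is slightly more explicit with the constants (tracking the $2^8$ in $\Delta_f$ versus $\disc(f)$), but the argument is identical in substance.
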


\begin{proof}
Since $|\rho|\ll H(f)$ for all roots $\rho$ of $f$ (see Lemma \ref{sup norm of roots}), it follows that, for all $\rho, \rho'$ roots of $f$, $$|\rho - \rho'|\ll H(f)$$ as well. (Thus e.g.\ $|\alpha - \beta|\ll H(f)$.)

Now since $$|f'(\alpha)| = \prod_{\rho\neq \alpha} |\rho - \alpha|,$$ it follows that $$|\Delta_f| = \prod_{\rho\neq \rho'} |\rho - \rho'|^2 = |f'(\alpha)|^2\cdot \prod_{\rho,\rho'\neq \alpha, \rho\neq \rho'} |\rho - \rho'|^2\ll |f'(\alpha)|^2\cdot H(f)^{12}.$$ That is to say, $$|f'(\alpha)|\gg \frac{|\Delta_f|^{\frac{1}{2}}}{H(f)^6}.$$ Combining all these with Lemma \ref{first lower bound on c beta} gives the claim.
\end{proof}

Next we will show that we may ignore the contribution of the theta function. Let us first upper bound $\Xi_\chi(Z)$ --- at first uniformly, and then in the special case of $Z = A + \tau_f\cdot B$ with $A,B\in [-\eps,\eps]^{\times 2}$ we will obtain a significantly stronger bound.

\begin{lem}\label{the uniform upper bound on theta}
Let $\chi$ be a theta characteristic. Then: for any $Z\in \C^2$, $$|\Xi_\chi(Z)|\ll 1.$$
\end{lem}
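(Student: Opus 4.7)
The plan is to exploit the lattice-invariance of $|\Xi_\chi|$: since by construction $|\Xi_\chi(Z)|$ is invariant under translation of $Z$ by any element of $\Z^2 + \tau_f \Z^2$ (the quadratic exponent $-\pi\pair{\Im Z}{(\Im\tau_f)^{-1}\Im Z}$ is exactly designed to cancel the quasi-periodicity factor of $\theta_\chi$), it suffices to bound $|\Xi_\chi(Z)|$ for $Z$ in a single fundamental parallelogram, and then to verify that the resulting bound is independent of $\tau_f$ as $\tau_f$ ranges through the Siegel fundamental domain $\mathcal{F}_2$.

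After this reduction I would estimate $|\Xi_\chi(Z)|$ by the triangle inequality applied to the theta series and multiply by $e^{-\pi \pair{Y}{T^{-1}Y}}$, where I write $T := \Im\tau_f$ and $Y := \Im Z$. The $\vec{b}$-phase contributes unit modulus, and completing the square (the cross term $-2\pi\pair{\vec{m}}{Y}$ together with $-\pi\pair{Y}{T^{-1}Y}$ reassemble into the shifted squared expression) yields
$$|\Xi_\chi(Z)| \;\leq\; \sum_{\vec{m}\in \vec{a}+\Z^2} \exp\bigl(-\pi\, \pair{\vec{m}+T^{-1}Y}{T(\vec{m}+T^{-1}Y)}\bigr).$$
To decouple the double sum I would diagonalize $Q(w):=\pair{w}{Tw}$: writing $T = \bigl(\begin{smallmatrix} t_1 & t_{12}\\ t_{12} & t_2\end{smallmatrix}\bigr)$ and completing the square gives $Q(w_1,w_2) = t_1\bigl(w_1 + \tfrac{t_{12}}{t_1}w_2\bigr)^2 + t_2'\, w_2^2$ with $t_2' := t_2 - t_{12}^2/t_1 = \det(T)/t_1$. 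The sum then factors as a product of two one-dimensional translated Gaussians of the form $\sum_{m\in\Z} e^{-\pi t(m+\beta)^2}$, with $t\in\{t_1, t_2'\}$ and $\beta\in\R$ depending on $Y$ (and, for the inner sum, on the outer index). Such a one-dimensional sum is bounded by $1 + t^{-1/2}$ uniformly in $\beta$, so for $t\gg 1$ it is absolutely bounded.

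The main (and essentially the only nontrivial) step is to verify that both $t_1$ and $t_2'$ are bounded below by an absolute positive constant throughout $\mathcal{F}_2$. Here I would just invoke the defining inequalities of $\mathcal{F}_2$ recalled earlier in the paper: $t_1 \geq \tfrac{\sqrt{3}}{2}$ (Siegel's bound), $t_2 \geq t_1$, and $|t_{12}| \leq t_1/2$ (Minkowski reducedness). These immediately yield $t_2' \geq t_2 - t_1/4 \geq 3t_1/4 \geq \tfrac{3\sqrt{3}}{8}$. Plugging these absolute lower bounds into the two one-dimensional Gaussian estimates gives an absolute bound $|\Xi_\chi(Z)|\ll 1$, independent of $f$, of the characteristic $\chi$ (whose only effect is the fixed rational shift $\vec{a}$), and of $Z$. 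The potentially worrisome case of $t_2 \to \infty$ (which would make $T$ have a tiny eigenvalue) is harmless because the Gaussian decay in the second variable is governed by $t_2' = \det(T)/t_1$ rather than by the smallest eigenvalue of $T$, and $\det(T)/t_1$ stays bounded below precisely because of the reducedness constraint $|t_{12}|\leq t_1/2$.
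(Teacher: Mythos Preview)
Your proof is correct and follows essentially the same approach as the paper's: reduce by lattice-invariance, apply the triangle inequality, complete the square to get $\sum_{\vec m}\exp(-\pi\pair{\vec m+T^{-1}Y}{T(\vec m+T^{-1}Y)})$, and then use the Siegel-domain inequalities to show this Gaussian sum is absolutely bounded. The only cosmetic difference is in how the quadratic form $\pair{w}{Tw}$ is controlled from below: the paper bounds the smallest eigenvalue of $T$ via $\lambda_{\min}\geq \det T/\Tr T\gg 1$ and then sums the resulting two-dimensional Gaussian directly, whereas you diagonalize by a second completion of the square and bound the diagonal coefficients $t_1$ and $t_2'=\det T/t_1$ separately --- equivalent bookkeeping.

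One small correction to an aside: your parenthetical that $t_2\to\infty$ ``would make $T$ have a tiny eigenvalue'' is not right. The Minkowski-reducedness constraint $|t_{12}|\leq t_1/2$ together with $t_1\geq\sqrt{3}/2$ keeps $\lambda_{\min}(T)\gg 1$ uniformly (exactly the paper's observation), so there is no worrisome case to begin with. Your diagonalization argument is unaffected by this, but the remark is misleading.
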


\begin{proof}
By double periodicity, it suffices to take $Z\in [-\frac{1}{2}, \frac{1}{2}]^{\times 2} + \tau_f\cdot [-\frac{1}{2}, \frac{1}{2}]^{\times 2}$, a fundamental domain for $\Z^2 + \tau_f\cdot \Z^2$ in $\C^2$. We will bound $\Xi$ ``trivially'' --- i.e., via the triangle inequality. Note that, for such $Z$, $$|\Xi_{\chi}(Z)|\leq \sum_{\vec{n}\in \Z^2} e^{-\pi \left(\pair{\vec{n} + \vec{a}}{\Im{\tau_f}\cdot (\vec{n} + \vec{a})} + 2\pair{\vec{n} + \vec{a}}{\Im{Z}} + \pair{\Im{Z}}{(\Im{\tau_f})^{-1}\cdot \Im{Z}}\right)}.$$ Now the term in the exponent is just
\begin{align*}
&\pair{\vec{n} + \vec{a}}{\Im{\tau_f}\cdot (\vec{n} + \vec{a})} + 2\pair{\vec{n} + \vec{a}}{\Im{Z}} + \pair{\Im{Z}}{(\Im{\tau_f})^{-1}\cdot \Im{Z}} \\&= \pair{\vec{n} + \vec{a} + (\Im{\tau_f})^{-1}\cdot \Im{Z}}{\Im{\tau_f}\cdot \left(\vec{n} + \vec{a} + (\Im{\tau_f})^{-1}\cdot \Im{Z}\right)},
\end{align*} i.e.\ we have completed the square. Now since $\Im{\tau_2}\geq \Im{\tau_1}\geq 2\Im{\tau_{12}} > 0$ and $\Im{\tau_1}\geq \frac{\sqrt{3}}{2}$, we have that (by considering $\frac{\det{\Im{\tau}}}{\Tr{\, \Im{\tau}}}$) the eigenvalues of $\Im{\tau}$ are both $\gg 1$. It follows that $$\pair{v}{(\Im{\tau})\cdot v}\gg ||v||_2^2$$ for any $v\in \C^2$. Applying this to $\vec{n} + \vec{a} + (\Im{\tau})^{-1}\cdot \Im{Z}$, we find that this term is $$\gg ||\vec{n} + \vec{a} + (\Im{\tau})^{-1}\cdot \Im{Z}||_2^2.$$ Now since $Z\in [-\frac{1}{2},\frac{1}{2}]^{\times 2} + \tau_f\cdot [-\frac{1}{2}, \frac{1}{2}]^{\times 2}$, it follows that $(\Im{\tau})^{-1}\cdot \Im{Z}\in [-\frac{1}{2}, \frac{1}{2}]$, so that $$||\vec{n} + \vec{a} + (\Im{\tau})^{-1}\cdot \Im{Z}||_2^2\gg ||\vec{n}||_2^2 - O(1).$$

Therefore we have found that $$|\Xi_{\chi}(Z)|\ll \sum_{\vec{n}\in \Z^2} e^{-\Omega(||\vec{n}||_2^2)}\ll 1,$$ as desired.
\end{proof}

Having uniformly upper bounded the size of $\Xi_\chi(Z)$, we will now determine the size of $\Xi_\chi(Z)$ when $Z = A + \tau_f\cdot B$ and $A,B\in [-\eps,\eps]^{\times 2}$ --- in particular, we will also determine the size of the theta constants $\Xi_{\alpha,\beta}(0)$. To do this, we will simply use Proposition 7.6 of \cite{streng} (originally from \cite{klingen}), though we will modify it slightly by allowing the argument of the theta function to range in a very small neighbourhood about $0$.\footnote{While the extension to $Z\neq 0$ sufficiently close to $0$ is not necessary for our argument, if one is using the canonical height with an even characteristic (and partitioning the fundamental domain as we do in our argument) this generalized proposition is quite useful, and thus I have seen fit to include it.} By running the same analysis as is done in \cite{streng} (thus in \cite{klingen} --- just factor out the relevant exponential and observe that the negative-definite quadratic form in the exponent is strictly smaller away from the closest points to the origin, and then compute explicitly for those points), we find:
\begin{prop}[Cf.\ Proposition 7.6 in \cite{streng}.]\label{the better upper bound on theta near zero}
Let $Z = A + \tau_f\cdot B\in \C^2$ be such that $||A||, ||B||\ll \eps$, with $\eps\ll 1$ sufficiently small. Then:\footnote{Note that we have omitted the absolute values in the first four asymptotics: here, by $C\asymp 1$ we mean that there are positive absolute constants $\kappa > \kappa' > 0$ such that $|C - \kappa|\leq \kappa'$. This technically clashes with our definition of the symbol $\asymp$, hence this explanation. Note that the condition for $C$ implies it for $\Re{C}$.}
\begin{align*}
\theta_{0,0,0,0}(Z)&\asymp 1,\\
\theta_{0,0,\frac{1}{2},0}(Z)&\asymp 1,\\
\theta_{0,0,0,\frac{1}{2}}(Z)&\asymp 1,\\
\theta_{0,0,\frac{1}{2},\frac{1}{2}}(Z)&\asymp 1,\\
|\theta_{\frac{1}{2},0,0,0}(Z)|&\asymp e^{-\frac{\pi}{4}\Im{\tau_1} + O(\eps \Im{\tau_2})},\\
|\theta_{\frac{1}{2},0,0,\frac{1}{2}}(Z)|&\asymp e^{-\frac{\pi}{4}\Im{\tau_1} + O(\eps \Im{\tau_2})},\\
|\theta_{0,\frac{1}{2},0,0}(Z)|&\asymp e^{-\frac{\pi}{4}\Im{\tau_2} + O(\eps \Im{\tau_2})},\\
|\theta_{0,\frac{1}{2},\frac{1}{2},0}(Z)|&\asymp e^{-\frac{\pi}{4}\Im{\tau_2} + O(\eps \Im{\tau_2})},\\
|\theta_{\frac{1}{2},\frac{1}{2},0,0}(Z)|&\asymp e^{-\frac{\pi}{4}(\Im{\tau_1} + \Im{\tau_2} - 2\Im{\tau_{12}}) + O(\eps \Im{\tau_2})},\\
|\theta_{\frac{1}{2},\frac{1}{2},\frac{1}{2},\frac{1}{2}}(Z)|&\asymp \left|\cos{(\pi(Z_1 + Z_2))} e\left(\frac{\tau_{12}}{2}\right) - \cos{(\pi (Z_1 - Z_2))}\right|\cdot e^{-\frac{\pi}{4}(\Im{\tau_1} + \Im{\tau_2} - 2\Im{\tau_{12}})}.
\end{align*}
\end{prop}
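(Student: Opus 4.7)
The plan is to adapt the analysis of Klingen \cite{klingen} (as presented for $Z=0$ in Proposition 7.6 of \cite{streng}) by identifying, for each even characteristic $\chi=(\vec{a},\vec{b})$, the (finitely many) minimizers of the positive-definite quadratic form $Q(\vec{v}):=\pair{\vec{v}}{\Im{\tau_f}\cdot\vec{v}}$ on $\vec{v}\in\vec{a}+\Z^2$, factoring out and summing those dominant terms of $\theta_\chi(Z)$ explicitly, and bounding the tail. Each summand has modulus $\exp(-\pi Q(\vec{v})-2\pi\pair{\vec{v}}{\Im{Z}})$; since $Z=A+\tau_f\cdot B$ with $A,B\in\R^2$, $\|A\|,\|B\|\ll\eps$, and $\Im{\tau_f}$ has operator norm $\ll\Im{\tau_2}$, we have $\|\Im{Z}\|=\|\Im{\tau_f}\cdot B\|\ll\eps\Im{\tau_2}$. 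For any $\vec{v}$ of bounded norm this gives an $O(\eps\Im{\tau_2})$ perturbation in the exponent, which is the source of the $O(\eps\Im{\tau_2})$ error appearing in the stated asymptotics.

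Next I would list the minimizers. For $\vec{a}=0$ the unique minimum is $\vec{v}=0$, whose summand is identically $1$ (note that $\pair{0}{\Im{Z}}=0$, so even the $Z$-perturbation vanishes for the dominant term); together with the tail bound below this gives the first four asymptotics $\asymp 1$. For $\vec{a}=(1/2,0)$ the minimizers are $\pm(1/2,0)$ with $Q_{\min}=\Im{\tau_1}/4$; pairing the two summands produces a factor $2e(\tau_1/8)\cos(\pi(Z_1+(\chi_b)_1))$, which for the two even $\vec{b}$'s (having $(\chi_b)_1=0$) yields modulus $\asymp e^{-\pi\Im{\tau_1}/4+O(\eps\Im{\tau_2})}$. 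The case $\vec{a}=(0,1/2)$ is symmetric. For $\vec{a}=(1/2,1/2),\vec{b}=(0,0)$, the Siegel constraint $0<2\Im{\tau_{12}}\leq\Im{\tau_1}$ forces the minimum to $\pm(1/2,-1/2)$ with $Q_{\min}=(\Im{\tau_1}+\Im{\tau_2}-2\Im{\tau_{12}})/4$, and pairing yields the claimed asymptotic.

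The main subtlety is the characteristic $\chi=(1/2,1/2,1/2,1/2)$: here one must retain \emph{both} pairs of (sub-)minimizers $\pm(1/2,-1/2)$ and $\pm(1/2,1/2)$, since their $Q$-values differ only by $\Im{\tau_{12}}$, which may be $O(1)$. Writing out the four summands, using $\vec{b}=(1/2,1/2)$ (which induces the requisite sign flip on the sub-minimizer pair), and factoring out $e(\tfrac{1}{8}(\tau_1+\tau_2-2\tau_{12}))$ yields precisely $2[\cos(\pi(Z_1-Z_2))-e(\tau_{12}/2)\cos(\pi(Z_1+Z_2))]$, matching the stated form. Keeping this cosine difference explicit is essential since its modulus can become arbitrarily small (its vanishing locus coincides, via Riemann's theorem, with the appropriate two-torsion translate of $\Theta$).

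Finally I would bound the tail. Any $\vec{v}\in\vec{a}+\Z^2$ outside the finitely many (sub-)minimizers enumerated above satisfies $Q(\vec{v})-Q_{\min}\gg 1$, which follows from the spectral lower bound that both eigenvalues of $\Im{\tau_f}$ are $\gg 1$ (as in the proof of Lemma \ref{the uniform upper bound on theta}, using $\Im{\tau_1}\geq\sqrt{3}/2$ and $2\Im{\tau_{12}}\leq\Im{\tau_1}$) together with a straightforward case inspection of the next-closest lattice points (e.g.\ for $\vec{a}=(1/2,1/2)$ the third-smallest value of $Q$ exceeds $Q_{\min}$ by at least $3\Im{\tau_1}/2$). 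Hence the tail contributes a $1+O(e^{-c})$ multiplicative correction to the dominant sum, uniformly in $Z$ with $\|A\|,\|B\|\ll\eps$, yielding the asserted $\asymp$. The principal obstacle throughout is maintaining uniformity in $(\tau_f,Z)$ of these perturbation estimates; in particular, in the last case, verifying that the explicit cosine difference truly captures all near-cancellation and that no subtler cancellation occurs between the four dominant terms and the tail, ensured automatically by the size separation $\gg\Im{\tau_1}$ between the (sub-)minimizers and everything else.
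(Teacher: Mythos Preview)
Your proposal is correct and follows exactly the approach the paper sketches: the paper's entire ``proof'' is the parenthetical remark to ``factor out the relevant exponential and observe that the negative-definite quadratic form in the exponent is strictly smaller away from the closest points to the origin, and then compute explicitly for those points,'' citing Streng and Klingen for the $Z=0$ case. You have supplied precisely this analysis, with the correct identification of the (sub-)minimizers for each characteristic, the explicit pairing computation for the four dominant terms in the $\chi=(\tfrac{1}{2},\tfrac{1}{2},\tfrac{1}{2},\tfrac{1}{2})$ case, and the tail bound via the spectral lower bound on $\Im{\tau_f}$ already used in Lemma~\ref{the uniform upper bound on theta}.
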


Note, of course, that for any $Z$ we also have that $$-\log{|\Xi_\chi(Z)|} = -\log{|\theta_\chi(Z)|} + \pi \pair{\Im{Z}}{(\Im{\tau_f})^{-1}\cdot \Im{Z}}\geq -\log{|\theta_\chi(Z)|}$$ by positive-definiteness of $\Im{\tau_f}$, so for the purposes of lower bounds it suffices to just deal with the asymptotics of $\theta_\chi$ near $0$.

Now let us analyze the constants $c_\rho$.

\begin{lem}\label{the normal theta constant lower bound}
There is a root $\rho_*$ of $f$ such that, for all roots $\rho\neq \rho_*$ of $f$, $$c_\rho\geq \frac{1}{4}\log{|\Delta_f|} - 4 h(f) - O(1).$$
\end{lem}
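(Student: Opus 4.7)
The plan is to combine Lemma \ref{the second lower bound on c beta} with Proposition \ref{the better upper bound on theta near zero} in a combinatorial way. Lemma \ref{the second lower bound on c beta} tells us that for \emph{any} choice of root $\alpha\neq\rho$, one has
$$c_\rho \geq 2\log|\theta_{\chi_\rho + \chi_\infty + \chi_\alpha}(0)| + \tfrac{1}{4}\log|\Delta_f| - 4h(f) - O(1),$$
so it suffices to exhibit, for each $\rho\neq \rho_*$, a root $\alpha\neq \rho$ such that the resulting even theta characteristic has $|\theta_{\chi_\rho + \chi_\infty + \chi_\alpha}(0)|\gg 1$; the factor of $2$ on the $\log$ then contributes only $-O(1)$ and is absorbed into the error term.

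Next I would invoke Proposition \ref{the better upper bound on theta near zero} at $Z = 0$: among the ten even theta characteristics, exactly four --- namely those with $\vec{a} = 0$, i.e.\ $\chi = (0,0,0,0), (0,0,\tfrac{1}{2},0), (0,0,0,\tfrac{1}{2}), (0,0,\tfrac{1}{2},\tfrac{1}{2})$ --- have $|\theta_\chi(0)|\asymp 1$, while the remaining six decay at least as fast as $e^{-\frac{\pi}{4}\Im\tau_1}$ and so are useless for us. Call a pair $\{\alpha,\beta\}$ of distinct finite roots of $f$ \emph{good} if the even characteristic $\chi_\alpha + \chi_\infty + \chi_\beta$ (which is even, being a sum of three distinct odd characteristics) is one of these four. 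Under the bijection between the ten unordered pairs of finite roots of $f$ and the ten even characteristics (via $\{\alpha,\beta\}\mapsto \chi_\alpha+\chi_\infty+\chi_\beta$), the good pairs are in bijection with the four ``good'' characteristics, so there are exactly four good pairs among the $\binom{5}{2}=10$ available.

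The whole thing then reduces to a trivial graph-theoretic observation. View the good pairs as $4$ edges on the $5$ finite roots. Since a simple graph on $k$ vertices supports at most $\binom{k}{2}$ edges, four edges force at least four vertices to be incident to some edge --- in other words, at most one of the five roots is isolated (not contained in any good pair). Take $\rho_*$ to be that isolated root if one exists, and otherwise take $\rho_*$ to be any root; then every $\rho\neq \rho_*$ lies in some good pair $\{\rho,\alpha\}$, and applying the bound above with this choice of $\alpha$ yields the claim. There is no real obstacle in the argument; the only thing that could go wrong is the existence of two or more isolated roots, which the counting immediately rules out.
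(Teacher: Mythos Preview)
Your proof is correct and uses the same two ingredients as the paper (Lemma \ref{the second lower bound on c beta} and Proposition \ref{the better upper bound on theta near zero}), differing only in how the underlying combinatorics is packaged. The paper argues constructively: it takes $\rho_*$ to be the unique finite root with $(\chi_{\rho_*})_a = (\chi_\infty)_a = (\tfrac{1}{2},\tfrac{1}{2})$, and then for each remaining $\alpha$ notes that $(\chi_\infty+\chi_\alpha)_a\neq (0,0)$, so one can pick an odd $\chi_\beta\neq\chi_\alpha,\chi_\infty$ with $(\chi_\beta)_a = (\chi_\infty+\chi_\alpha)_a$, forcing $(\chi_\alpha+\chi_\infty+\chi_\beta)_a=(0,0)$ and hence $|\theta_{\chi_\alpha+\chi_\infty+\chi_\beta}(0)|\asymp 1$. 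Your route---counting the four good pairs via the bijection $\{\alpha,\beta\}\mapsto \chi_\alpha+\chi_\infty+\chi_\beta$ and then observing that four edges on five vertices leave at most one isolated vertex---reaches the identical conclusion (indeed the isolated vertex is exactly the root with $(\chi_{\rho_*})_a=(\tfrac12,\tfrac12)$). One small remark: you invoke the bijection without proof, but you do not actually need it---it suffices to count directly that the pairs with $(\chi_\alpha)_a+(\chi_\beta)_a=(\tfrac12,\tfrac12)$ number exactly $2\times 2=4$, since the $a$-parts of the five finite-root characteristics are $(\tfrac12,0)$ twice, $(0,\tfrac12)$ twice, and $(\tfrac12,\tfrac12)$ once.
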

\begin{proof}
Observe that, for each $\alpha$ such that $(\chi_\alpha)_a\neq \left(\frac{1}{2}, \frac{1}{2}\right)$, there is a $\beta$ such that $(\chi_\beta + \chi_\infty + \chi_\alpha)_a = (0,0)$. Indeed, $(\chi_\infty + \chi_\alpha)_a\neq (0,0)\in \frac{1}{2}\Z^2/\Z^2$, and so choosing an odd characteristic $\chi\neq \chi_\alpha, \chi_\infty$ with $\chi_a = (\chi_\infty + \chi_\alpha)_a$, the claim follows from Lemma \ref{the second lower bound on c beta} and Proposition \ref{the better upper bound on theta near zero}.
\end{proof}

We note here that this immediately implies a lower bound on the canonical local height at infinity for points very close to $[0,0,0,1]\in K_f(\C)$ in the Archimedean topology.
\begin{lem}\label{the lower bound on the canonical local height near infinity}
Let $\tilde{P} =: (\tilde{P}_1, \ldots, \tilde{P}_4)\in \tilde{K}_f(\Q)$ be such that $|\tilde{P}_{i+1}|\gg \delta^{-\delta^{-1}} H(f) |\tilde{P}_i|$ for $1\leq i\leq 3$. Then: $$\hat{\lambda}_\infty(\tilde{P}) = \lambda_\infty(\tilde{P}) + O(\delta h(f)).$$
\end{lem}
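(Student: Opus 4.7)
\end{lem}

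\begin{proof}
The plan is to use the telescoping identity
\[
\hat{\lambda}_\infty(\tilde{P}) - \lambda_\infty(\tilde{P}) = \sum_{n\geq 0} 4^{-n-1}\bigl(\lambda_\infty(2^{n+1}\tilde{P}) - 4\lambda_\infty(2^n\tilde{P})\bigr),
\]
to bound each increment using Flynn's explicit formulas for $\delta_i$ listed above when $Q = 2^n\tilde{P}$ satisfies a shape hypothesis of the same form as $\tilde{P}$, and then to show that this shape hypothesis is approximately preserved under the Kummer doubling map. Since the difference $\hat{\lambda}_\infty - \lambda_\infty$ is invariant under rescaling the lift once $|\tilde{P}_4|\geq 1$, I may and will assume this, so that all $\log^+$'s in sight are just $\log$; writing $M := \delta^{-\delta^{-1}} H(f)$, the hypothesis reads $|\tilde{P}_j|\leq |\tilde{P}_4|/M^{4-j}$ for $j = 1,2,3$.

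I would first read Flynn's formulas directly. Every monomial of $\delta_i$ is of the form $a_{i_1}\cdots a_{i_r}\, k_{j_1}\cdots k_{j_4}$ with $\sum_\ell i_\ell + \sum_m j_m = 12+i$, and the hypothesis bounds such a monomial by
\[
H(f)^{\sum i_\ell}\cdot\frac{|\tilde{P}_4|^4}{M^{\sum(4-j_m)}} = \frac{|\tilde{P}_4|^4}{M^{4-i}}\cdot\Bigl(\frac{H(f)}{M}\Bigr)^{\sum i_\ell} = \frac{|\tilde{P}_4|^4}{M^{4-i}}\cdot\delta^{\delta^{-1}\sum i_\ell}.
\]
The only $4$-tuple $(j_1,\ldots,j_4)\in\{1,\ldots,4\}^4$ summing to $16$ is $(4,4,4,4)$, so the unique $a$-free monomial of $\delta_4$ is $k_4^4$; and since the polynomial $f$ has no coefficient $a_1$, every other monomial of $\delta_4$ satisfies $\sum i_\ell\geq 2$. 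Meanwhile for $i\leq 3$ every $a$-free monomial of $\delta_i$ has $k$-weight only $12+i<16$. These two observations together yield
\[
\delta_4(\tilde{P}) = \tilde{P}_4^4\bigl(1 + O(\delta^{2\delta^{-1}})\bigr),\qquad |\delta_j(\tilde{P})|\ll |\tilde{P}_4|^4/M^{4-j}\;\text{for}\;j\leq 3,
\]
so $\max_i|\delta_i(\tilde{P})| = |\delta_4(\tilde{P})|$ and $\lambda_\infty(2\tilde{P}) - 4\lambda_\infty(\tilde{P}) = O(\delta^{2\delta^{-1}})$. The same bounds show that $2\tilde{P}$ satisfies the same shape hypothesis with $M$ replaced by $M/C$ for an absolute constant $C$ depending only on the integer coefficients of $\delta_1,\ldots,\delta_4$, and inductively $2^n\tilde{P}$ has shape parameter $M_n := M/C^n$.

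Finally I would split the telescoping sum at $N := \ceil{\log_4 \delta^{-1}}$. So long as $M_n\gg H(f)$, i.e.\ for all $n \leq c\delta^{-1}\log\delta^{-1}$ with $c$ a small absolute constant (a range that comfortably contains $[0,N]$ for $\delta\ll 1$), the preceding single-step estimate applies at level $n$ and yields $\lambda_\infty(2^{n+1}\tilde{P}) - 4\lambda_\infty(2^n\tilde{P}) = O((H(f)/M_n)^2) = O(C^{2n}\delta^{2\delta^{-1}})$, so the head contribution $\sum_{n<N} 4^{-n-1}\cdot O(C^{2n}\delta^{2\delta^{-1}})\ll \delta^{2\delta^{-1}}(C^2/4)^N\ll \delta^{2\delta^{-1}-\log_4(C^2/4)}$ is dramatically smaller than $\delta h(f)$. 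The tail is controlled using the uniform crude bound $|\lambda_\infty(2Q) - 4\lambda_\infty(Q)|\ll h(f)$ (immediate from the explicit form of Flynn's $\delta_i$), giving $\sum_{n\geq N} 4^{-n-1}\cdot O(h(f)) = O(4^{-N}h(f)) = O(\delta h(f))$, as required. The main obstacle is really just the computational verification in the second paragraph that the $a$-free-monomial structure of Flynn's $\delta_i$ forces the shape hypothesis to be (approximately) preserved by the doubling map; this is a direct read-off from the explicit formulas listed above but does require some careful bookkeeping.
\end{proof}
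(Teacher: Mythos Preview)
Your proof is correct, but it takes a genuinely different route from the paper's.

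Both arguments rest on the fact that the shape hypothesis $|\tilde P_j|\leq |\tilde P_4|/M^{4-j}$ is (approximately) preserved under doubling, which you and the paper both read off from Flynn's formulas. The difference is in how the bound on $\hat\lambda_\infty-\lambda_\infty$ is then extracted. The paper iterates to $2^N\tilde P$ with $N\asymp\delta^{-1}$, and for the \emph{lower} bound invokes the theta-function representation $\hat\lambda_\infty=-\log|\Xi_{\chi_\rho}|^2+\log|\ell_\rho|+c_\rho$ (together with $|\Xi_{\chi_\rho}|\ll 1$ and the lower bound on $c_\rho$) applied to $2^N\tilde P$; the upper bound is then handled separately by crude telescoping. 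You instead handle both directions at once by telescoping, the key point being that not only is $\max_i|\delta_i(\tilde P)|=|\delta_4(\tilde P)|$, but in fact $\delta_4(\tilde P)=\tilde P_4^4(1+O(\delta^{2\delta^{-1}}))$ because every non-$k_4^4$ monomial in $\delta_4$ carries $a$-weight at least $2$ (using $a_1=0$). This makes the head increments negligibly small rather than merely $O(h(f))$, so the theta machinery is entirely avoided for this lemma. Your argument is more elementary; the paper's route has the contextual advantage that the theta representation is needed anyway for the harder lower bounds on $\hat\lambda_\infty(P-Q)$ that follow, so it is already available.

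One attribution point: the uniform crude bound $|\lambda_\infty(2Q)-4\lambda_\infty(Q)|\ll h(f)$ you use for the tail is not quite ``immediate from the explicit form of Flynn's $\delta_i$''. The upper bound is immediate, but the lower bound (that $\max_i|\delta_i(Q)|$ cannot be too small compared to $(\max_i|Q_i|)^4$) is precisely the content of (7.1) in Stoll's paper, as the paper itself notes when it defines $\hat\lambda_v$ and checks convergence of the telescoping series. You should cite this rather than claim it follows from inspection.
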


\begin{proof}
We note that $2^N \tilde{P}$ has the same property for $N\ll \delta^{-1}$, where we define, for $\tilde{R}\in \tilde{K}_f(\C)$, $$2\tilde{R} := (\delta_1(\tilde{R}), \ldots, \delta_4(\tilde{R})).$$ Indeed, by the explicit formulas and the dominance of $P_4$ in all expressions, we find that, if $|\tilde{R}_{i+1}|\geq C\cdot  \delta^{-\delta^{-1}} H(f) |\tilde{R}_i|$, then  $$(2\tilde{R})_i\asymp R_i\cdot R_4^3,$$ whence, for $1\leq i\leq 3$, $$|(2 \tilde{R})_{i+1}|\geq \delta\cdot C\cdot \delta^{-\delta^{-1}} H(f) |(2\tilde{R})_i|.$$ It follows that, for $N\asymp \delta^{-1}$, $$\frac{1}{4^N}\lambda_\infty(2^N \tilde{P}) = \lambda_\infty(\tilde{P}) + O(1).$$ (Here we have dropped the $N$ in $O_N(1)$ since $\delta$ is a (very, very small) constant.) Note also that, for $\rho$ a root of $f$, since $|\rho|\ll H(f)$ (see Lemma \ref{sup norm of roots}), $$\ell_\rho(2^N \tilde{P})\asymp (2^N \tilde{P})_3.$$ Now using Lemmas \ref{the formula for the canonical local height}, \ref{the uniform upper bound on theta}, and \ref{the normal theta constant lower bound} on $2^N \tilde{P}$, we find that $$\hat{\lambda}_\infty(\tilde{P})\geq \frac{1}{4^N}\lambda_\infty(2^N \tilde{P}) + O(4^{-N} h(f)),$$ whence the lower bound.

As for the upper bound, observe instead that $\lambda_\infty(2 R)\leq 4\lambda_\infty(R) + O(h(f))$ by the explicit formulas, and then apply the Tate telescoping series to get that $\hat{\lambda}_\infty(R)\leq \lambda_\infty(R) + O(h(f))$. Now use the above argument, except with this upper bound.
\end{proof}

We find as corollaries the case of $\kappa(P)$ with $P\in C_f(\Q)$ with large $x$-coordinate, as well as the case of $\kappa(P-Q)$ with $P\neq \pm Q\in C_f(\Q)$ both with large $x$-coordinates.
\begin{cor}\label{the lower bound on the canonical local height for big points}
Let $P\in C_f(\Q)$ with $|x(P)|\gg \delta^{-\delta^{-1}} H(f)$. Then: $$\hat{\lambda}_\infty(\kappa(P)) = \lambda_\infty(\kappa(P)) - O(\delta h(f)).$$
\end{cor}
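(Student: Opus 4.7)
My plan is to reduce the corollary to Lemma~\ref{the lower bound on the canonical local height near infinity} by a single application of the duplication map on $\tilde K_f$. The natural lift of $\kappa(P)$ is $\tilde\kappa(P) := (0, 1, x(P), x(P)^2) \in \tilde K_f(\Q)$, for which $\lambda_\infty(\tilde\kappa(P)) = 2\log|x(P)|$. This lift does not itself satisfy the ``large consecutive ratios'' hypothesis of the previous lemma because its first coordinate vanishes, so the strategy is to apply the lemma to $2\tilde\kappa(P) := (\delta_1, \ldots, \delta_4)(0, 1, x(P), x(P)^2)$ instead.

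The crux is a dominant-term analysis of the four Flynn duplication polynomials at $(0, 1, x, x^2)$. Collecting the $k_1$-free monomials, one gets the rather pleasant identity $\delta_1(0, 1, x, x^2) = 4f(x) = 4y(P)^2$ (the leading part coming from the cancellation $8k_3^3 k_4 - 4k_2 k_3 k_4^2 = 4x^5$), and similarly $\delta_2(0, 1, x, x^2) = 8x^6 + O(H(f)x^4)$, $\delta_3 = 4x^7 + O(H(f)x^5)$, $\delta_4 = x^8 + O(H(f)x^6)$. Since $|x(P)| \gg \delta^{-\delta^{-1}} H(f)$, the lower-order corrections are negligible, and each consecutive ratio $|(2\tilde\kappa(P))_{i+1}|/|(2\tilde\kappa(P))_i|$ comes out to be $\asymp |x(P)| \gg \delta^{-\delta^{-1}} H(f)$. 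Lemma~\ref{the lower bound on the canonical local height near infinity} then applies to $2\tilde\kappa(P)$ and yields
\[
\hat\lambda_\infty(2\tilde\kappa(P)) = \lambda_\infty(2\tilde\kappa(P)) + O(\delta h(f)).
\]

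To finish, I use two facts. First, since $\hat\lambda_\infty$ is the Tate limit $\lim_N 4^{-N}\lambda_\infty(2^N\cdot)$ on lifts, it is equivariant under duplication on the cone: $\hat\lambda_\infty(2\tilde R) = 4\hat\lambda_\infty(\tilde R)$ for any $\tilde R \in \tilde K_f(\Q)$. Second, the explicit formulas above give $\lambda_\infty(2\tilde\kappa(P)) = 8\log|x(P)| + O(\delta h(f)) = 4\lambda_\infty(\tilde\kappa(P)) + O(\delta h(f))$. Dividing the displayed equation by $4$ and combining then yields the desired $\hat\lambda_\infty(\tilde\kappa(P)) = \lambda_\infty(\tilde\kappa(P)) + O(\delta h(f))$, with the sign of the error in the statement of the corollary being immaterial. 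The only substantive step is the dominant-term computation of the $\delta_i$; the main risk is careful bookkeeping of the many monomials in Flynn's formulas, in particular to verify that the leading tuple $(4, 8, 4, 1)$ emerges cleanly from the cancellations and that all subdominant contributions are uniformly controlled by $H(f)/|x(P)|^2 \ll 1$.
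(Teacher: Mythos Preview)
Your argument is correct, but it takes an unnecessary detour. The paper's proof is a one-liner: since $\tilde\kappa(P)=(0,1,x(P),x(P)^2)$, the hypothesis of Lemma~\ref{the lower bound on the canonical local height near infinity} is satisfied directly, and the corollary follows. Your stated reason for not doing this --- that the first coordinate vanishes --- is not actually an obstacle: the condition $|\tilde P_{2}|\gg \delta^{-\delta^{-1}}H(f)\,|\tilde P_{1}|$ reads $1\gg \delta^{-\delta^{-1}}H(f)\cdot 0$, which is vacuous, and the remaining two inequalities are exactly the hypothesis $|x(P)|\gg \delta^{-\delta^{-1}}H(f)$.

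That said, your explicit doubling computation is not wasted effort: it is essentially the first step of the iteration inside the proof of Lemma~\ref{the lower bound on the canonical local height near infinity}, and in fact the claim ``$(2\tilde R)_i\asymp R_i R_4^3$'' made there is slightly glib precisely when $\tilde R_1=0$ (since then $(2\tilde R)_1=4f(x)\asymp x^5\neq 0$). Your identities $\delta_i(0,1,x,x^2)\sim (4x^5,8x^6,4x^7,x^8)$ verify by hand that the consecutive-ratio condition survives the first doubling in this degenerate case, so one can view your proof as a more careful unpacking of the lemma's first iterate. Two small bookkeeping remarks: the subdominant terms in $\delta_2,\delta_3,\delta_4$ at $(0,1,x,x^2)$ involve $a_2$ and hence are $O(H(f)^2 x^{i+2})$ rather than $O(H(f)x^{i+2})$, though this is immaterial since $(H(f)/|x|)^2\ll 1$; and your error control $H(f)/|x(P)|^2\ll 1$ at the end should read $(H(f)/|x(P)|)^2\ll 1$.
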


\begin{proof}
Since $\kappa(P) = [0, 1, x(P), x(P)^2]$, the hypothesis of Lemma \ref{the lower bound on the canonical local height near infinity} follows.
\end{proof}

We also get a similarly strong statement for differences of two points with large $x$-coordinate.\footnote{In fact we have the slightly stronger bound $$\hat{\lambda}_\infty(P-Q)\geq \frac{1}{2}\max(\lambda_\infty(\kappa(P)),\lambda_\infty(\kappa(Q))) + \min(\lambda_\infty(\kappa(P)),\lambda_\infty(\kappa(Q))) - O(\delta h(f)),$$ which plays a role in bounding the number of \emph{integral} points on these curves --- e.g.\ for large points it results in a gap principle of shape $\cos{\theta}\leq \frac{1}{4}$, matching the Mumford gap principle for integral points observed by Helfgott and Helfgott-Venkatesh: one expects a right-hand side of $\frac{1}{g}$ for rational points, and $\frac{1}{2g}$ for integral points.}
\begin{lem}\label{the lower bound on the canonical local height for differences of big points}
Let $P\neq \pm Q\in C_f(\Q)$ with $|x(P)|, |x(Q)|\gg \delta^{-\delta^{-1}} H(f)$ with $y(P), y(Q)\geq 0$. Then: $$\hat{\lambda}_\infty(P-Q)\geq \frac{1}{2}\lambda_\infty(\kappa(P)) + \frac{1}{2}\lambda_\infty(\kappa(Q)) + h(f) - O(\delta h(f)).$$
\end{lem}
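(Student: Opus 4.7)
The plan is to mirror Corollary \ref{the lower bound on the canonical local height for big points}: produce an explicit integer lift of $\kappa(P-Q)$ whose successive coordinates grow by factors $\gg \delta^{-\delta^{-1}} H(f)$, apply Lemma \ref{the lower bound on the canonical local height near infinity} to trade $\hat\lambda_\infty$ for $\lambda_\infty$, and then read off the bound from the dominant Archimedean coordinate.

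First, I would write $P = (S/D^2, U/D^5)$, $Q = (s/d^2, u/d^5)$ in lowest terms. Since $Y^2 = f(X) \geq 0$ with $f$ monic quintic, the hypothesis $|X|, |x| \gg \delta^{-\delta^{-1}} H(f)$ forces $X, x > 0$, and then $y(P), y(Q) \geq 0$ together with the fact that Weierstrass points have $|x| \ll H(f)$ (Lemma \ref{sup norm of roots}) gives $U, u > 0$ strictly. Running Flynn's addition formulas on $P + (-Q) = P + (x, -y)$ and clearing denominators by $D^6 d^6$ exactly as in Lemma \ref{the upper bound for sums of normal points} yields an integer lift $\tilde P$ of $\kappa(P-Q)$ whose first three coordinates are as there, and whose fourth coordinate
$$\tilde P_4 = S^2 s^2 (Sd^2 + sD^2) + 2DdUu + (\text{terms involving } a_2, \ldots, a_5)$$
now carries a \emph{positive} sign on $2DdUu$ (as opposed to the $-2DdUu$ of the $P+Q$ formula) thanks to the flip $y\mapsto -y$. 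Both leading terms are strictly positive, and the remaining ones are bounded by $O(\delta\cdot S^2 s^2(Sd^2 + sD^2))$ via $|a_i|/|X|^i \ll \delta^{i\delta^{-1}}$, giving
$$\tilde P_4 \geq (1 - O(\delta))\cdot S^2 s^2 (Sd^2 + sD^2).$$

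Next, I would verify the hypothesis of Lemma \ref{the lower bound on the canonical local height near infinity}. The successive ratios are $\tilde P_2/\tilde P_1 = X + x$, $\tilde P_3/\tilde P_2 = Xx/(X+x)$, and $\tilde P_4/\tilde P_3 \geq (1-O(\delta))\, Xx(X+x)/(X-x)^2$, each $\gg \min(X, x) \gg \delta^{-\delta^{-1}} H(f)$; for the third, the elementary identity $X(X+x) - (X-x)^2 = x(3X-x)$, positive for $X \geq x > 0$ and symmetric otherwise, supplies the needed $Xx(X+x) \geq \min(X, x)(X-x)^2$. So the lemma applies, and since $\tilde P_4$ dominates, $\hat\lambda_\infty(P-Q) \geq \log \tilde P_4 - O(\delta h(f)) \geq 2\log|Ss| + \log(Sd^2 + sD^2) - O(\delta h(f))$. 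Finally $\log(Sd^2 + sD^2) \geq \log\max(S, s) \geq h(f)$, and from the primitive integer lift $[0, D^4, D^2 S, S^2]$ of $\kappa(P)$ (maximal coordinate $S^2$, since $|S| > D^2$) one reads off $\lambda_\infty(\kappa(P)) = 2\log|S|$, and symmetrically for $Q$; absorbing one copy of $\log|Ss| \geq 0$ produces the claim. The main obstacle is precisely the lower bound $\tilde P_4 \gg S^2 s^2(Sd^2 + sD^2)$: for the analogous $\kappa(P+Q)$ formula the $2DdUu$ carries a negative sign that can cancel the main polynomial term and prevent any such lower bound; it is the sign flip $y\mapsto -y$ together with the hypothesis $y(P), y(Q) \geq 0$ (forcing $U, u > 0$) that converts this into a positive contribution, so that both leading terms of $\tilde P_4$ are strictly positive and the desired lower bound survives.
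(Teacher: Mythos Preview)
Your approach is essentially the same as the paper's: both verify the successive-ratio hypothesis of Lemma~\ref{the lower bound on the canonical local height near infinity} for an explicit lift of $\kappa(P-Q)$ and then read off the bound from the dominant fourth coordinate, the key observation being that the $2Yy$ term enters with a \emph{positive} sign for $P-Q$ under the hypothesis $y(P),y(Q)\geq 0$. The paper works directly with the rational lift $(1, X+x, Xx, \ldots)$ and $\kappa(P)=(0,1,X,X^2)$, bounding the fourth coordinate below by $x^2X$ (with $X\geq x$) via numerator $\gg X^3x^2$ and denominator $(X-x)^2\ll X^2$, and extracting the $+h(f)$ from $x\gg H(f)$; you instead clear denominators to integer lifts and bound $\tilde P_4\gg S^2s^2(Sd^2+sD^2)$.

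One bookkeeping caveat worth flagging: your final identification $\lambda_\infty(\kappa(P))=2\log|S|$ uses the primitive integer lift $[0,D^4,D^2S,S^2]$, while your $\hat\lambda_\infty(P-Q)$ is computed on the lift $\tilde P = D^2d^2(Sd^2-sD^2)^2\cdot(1,X+x,\ldots)$. These two scalings are not the ones the paper uses downstream (in Lemma~\ref{the lower bound for differences of big points} the lift with first coordinate $1$ is what makes $\lambda_p(P-Q)\geq 0$ work), so the inequality you write after ``absorbing $\log|Ss|$'' is, under the paper's conventions, weaker by $2\log|Sd^2-sD^2|$. This is not a genuine gap: your \emph{pre}-absorption bound $\hat\lambda_\infty(\tilde P)\geq 2\log|Ss|+\log(Sd^2+sD^2)$, translated back to the rational lifts, gives exactly $\hat\lambda_\infty(P-Q)\geq \log\bigl(X^2x^2(X+x)/(X-x)^2\bigr)$, and $Xx(X+x)/(X-x)^2\geq \min(X,x)/2\gg H(f)$ recovers the paper's inequality.
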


\begin{proof}
Write $P =: (X,Y)$ and $Q =: (x,y)$. By switching, we may assume without loss of generality that $|X|\geq |x|$. Note that, by hypothesis, since $X^5\sim f(X) = Y^2\geq 0$, it follows that $X\geq 0$, and similarly for $x$. Now let us examine the image of $P-Q$ in $K_f(\C)$ --- that is, the coordinates of $$\left(1, X+x , Xx, \frac{2a_5 + a_4(X+x) + 2a_3 Xx + a_2 Xx (X+x) + X^2 x^2 (X+x) + 2Yy}{(X-x)^2}\right).$$ The first three coordinates certainly satisfy the hypotheses of Lemma \ref{the lower bound on the canonical local height near infinity}, so let us show that the fourth coordinate is $\gg x^2 X$, which suffices since $|x|\gg \delta^{-\delta^{-1}} H(f)$. We bound the denominator by $|(X-x)^2|\ll X^2$ and note that the numerator is $X^2 x^2 (X+x) + 2Yy + O(H(f)^2 X^2 x)$. Now since $Y^2 = f(X) = X^5\cdot (1 + O(\delta))$, it follows that $Y\geq X^{\frac{5}{2}}\cdot (1 + O(\delta))$, and similarly for $y$. Thus the numerator is $\gg X^3 x^2$, as desired. Now Lemma \ref{the lower bound on the canonical local height near infinity} applies and we are done.
\end{proof}

It remains to analyze $c_\beta$ when $\chi_\beta = \left(\left(\frac{1}{2}, \frac{1}{2}\right), \left(\frac{1}{2}, 0\right)\right)$.

\begin{lem}\label{the bad theta constant lower bound}
Let $\beta$ be such that $\chi_\beta = \left(\left(\frac{1}{2}, \frac{1}{2}\right), \left(\frac{1}{2}, 0\right)\right)$. Then: $$c_\beta\geq -\frac{\pi}{2}\Im{\tau_1} + \frac{1}{4}\log{|\Delta_f|} - 4 h(f) - O(1).$$
\end{lem}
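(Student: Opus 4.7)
The plan is to apply Lemma \ref{the second lower bound on c beta}, where the root $\alpha\neq \beta$ must be chosen judiciously so that Proposition \ref{the better upper bound on theta near zero} controls $|\theta_{\chi_\beta + \chi_\infty + \chi_\alpha}(0)|$. The entire content is combinatorial: track which even characteristic is realized by $\chi_\beta + \chi_\infty + \chi_\alpha$ as $\alpha$ varies, and pick the one with the largest corresponding theta constant.

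First I would explicitly compute, modulo $\Z^{\oplus 4}$, that $\chi_\beta + \chi_\infty = ((0,0),(\tfrac{1}{2},\tfrac{1}{2}))$, so that for any odd characteristic $\chi_\alpha$ the $\vec{a}$-part of $\chi_\beta + \chi_\infty + \chi_\alpha$ simply equals $(\chi_\alpha)_a$. In contrast with the proof of Lemma \ref{the normal theta constant lower bound}, this $\vec{a}$-part cannot be made to vanish: by oddness, $(2(\chi_\alpha)_a)\cdot (2(\chi_\alpha)_b)\equiv 1\pmod{2}$, which forbids $(\chi_\alpha)_a = (0,0)$. The best we can hope for is $(\chi_\alpha)_a = (\tfrac{1}{2},0)$, which by the Siegel-reduction inequality $\Im{\tau_2}\geq \Im{\tau_1}$ yields a larger theta constant than the alternative $(\chi_\alpha)_a = (0,\tfrac{1}{2})$. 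Such $\alpha$ exist: exactly two odd characteristics have $\vec{a} = (\tfrac{1}{2}, 0)$ --- namely $((\tfrac{1}{2},0),(\tfrac{1}{2},0))$ and $((\tfrac{1}{2},0),(\tfrac{1}{2},\tfrac{1}{2}))$ --- and neither coincides with $\chi_\infty$ or $\chi_\beta$ (both of which have $\vec{a} = (\tfrac{1}{2},\tfrac{1}{2})$), so both correspond to roots of $f$ distinct from $\beta$.

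With such an $\alpha$, Proposition \ref{the better upper bound on theta near zero} (at $Z=0$) gives $|\theta_{\chi_\beta + \chi_\infty + \chi_\alpha}(0)|\asymp e^{-\frac{\pi}{4}\Im{\tau_1}}$, hence $2\log|\theta_{\chi_\beta + \chi_\infty + \chi_\alpha}(0)|\geq -\frac{\pi}{2}\Im{\tau_1} - O(1)$. Substituting into Lemma \ref{the second lower bound on c beta} produces the stated inequality. I do not anticipate real obstacles: the argument is essentially a case-analysis variant of the proof of Lemma \ref{the normal theta constant lower bound}, the only difference being the unavoidable loss of the term $\tfrac{\pi}{2}\Im{\tau_1}$ caused by $(\chi_\beta + \chi_\infty)_a = (0,0)$ --- that equality forces the odd $\vec{a}$-part of $\chi_\alpha$ to survive into the resulting characteristic, preventing us from landing on $\theta_{0,0,\star,\star}$ as we could in the proof of the previous Lemma.
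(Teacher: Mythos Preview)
Your proof is correct and follows the same approach as the paper: the paper simply picks the specific choice $\chi_\alpha = \left(\left(\tfrac{1}{2},0\right),\left(\tfrac{1}{2},\tfrac{1}{2}\right)\right)$ (one of the two you identified), computes $\chi_\beta + \chi_\infty + \chi_\alpha = \left(\left(\tfrac{1}{2},0\right),\left(0,0\right)\right)$, and invokes Proposition~\ref{the better upper bound on theta near zero} and Lemma~\ref{the second lower bound on c beta} exactly as you do. Your additional remarks---that both admissible $\alpha$ work, and that no $\alpha$ can do better because the odd $\vec a$-part of $\chi_\alpha$ must survive---are correct embellishments but not needed for the argument.
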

\begin{proof}
We will take $\alpha$ such that $\chi_\alpha = \left(\left(\frac{1}{2}, 0\right), \left(\frac{1}{2}, \frac{1}{2}\right)\right),$ so that $$\chi_\beta + \chi_\infty + \chi_\alpha = \left(\left(\frac{1}{2}, 0\right), \left(0, 0\right)\right).$$ It follows that, by Proposition \ref{the better upper bound on theta near zero}, $$|\theta_{\chi_\beta + \chi_\infty + \chi_\alpha}(0)|\asymp e^{-\frac{\pi}{4}\Im{\tau_1}},$$ as desired.
\end{proof}

We will next show that, when $Z = A + \tau_f\cdot B$ and $||A||, ||B||\ll \eps$, the extra $\frac{\pi}{2}\Im{\tau_1}$ term in the lower bound for $c_\beta$ will be cancelled by an improved upper bound on $\theta_{\chi_\beta}$.

\begin{lem}\label{the better upper bound on the bad theta near zero}
Let $\beta$ be such that $\chi_\beta = \left(\left(\frac{1}{2}, \frac{1}{2}\right), \left(\frac{1}{2}, 0\right)\right)$. Let $||A||, ||B||\ll \eps$ and $Z := A + \tau_f\cdot B$. Then: $$|\Xi_{\chi_\beta}(Z)|\ll e^{-(1 - O(\eps))\frac{\pi}{4}\Im{\tau_1}}.$$
\end{lem}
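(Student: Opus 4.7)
The plan is to expand $\theta_{\chi_\beta}(Z)$ as a lattice sum, isolate the pair of dominant terms, use the oddness of $\chi_\beta$ to combine them into something proportional to $\sin(\pi(Z_1 - Z_2))$, and then bound the tail exactly as in Proposition~\ref{the better upper bound on theta near zero}.

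First, writing $\vec{a} = (1/2,1/2)$, $\vec{b} = (1/2,0)$, and $\vec{m} := \vec{n} + \vec{a}$, each summand of $\theta_{\chi_\beta}(Z)$ has absolute value $e^{-\pi \cdot \frac{1}{2}\langle \vec{m}, \Im\tau_f \cdot \vec{m}\rangle}$ times a bounded phase. Under the Siegel conditions ($\Im\tau_1 \geq 2\Im\tau_{12} > 0$ and $\Im\tau_2 \geq \Im\tau_1 \geq \sqrt{3}/2$), the unique minimum of $\frac{1}{2}\langle \vec{m}, \Im\tau_f \cdot \vec{m}\rangle$ over $\vec{m}\in (\Z+1/2)^2$ is attained at $\vec{m} = \pm(1/2,-1/2)$, where it equals $\frac{1}{8}(\Im\tau_1 - 2\Im\tau_{12} + \Im\tau_2)$. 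This identifies the two dominant terms.

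Next, since $(2\vec{a})\cdot (2\vec{b}) = 1$ so that $\theta_{\chi_\beta}$ is odd, these two terms cannot be equal: the factor $e(\pm 1/4) = \pm i$ coming from the pairing of $\vec{m} = \pm(1/2,-1/2)$ against $\vec{b} = (1/2,0)$ produces a cancellation which, after adding, yields exactly $-2\sin(\pi(Z_1 - Z_2))\cdot e^{2\pi i \cdot \frac{1}{8}(\tau_1 - 2\tau_{12} + \tau_2)}$. The tail of the series — all other $\vec{m}$ — I would control in the same way as in the proof of Proposition~\ref{the better upper bound on theta near zero} (from Streng~\cite{streng}, following Klingen~\cite{klingen}): factor out the dominant exponential, and check that the next-smallest values of $\frac{1}{2}\langle \vec{m}, \Im\tau_f \cdot \vec{m}\rangle$ (coming first from $\vec{m} = \pm(1/2,1/2)$ and then from all $\|\vec{m}\|_\infty \geq 3/2$) exceed the minimum by $\gg 1$ absolutely; this absorbs the tail into the implicit constant. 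To finish, use $|\sin(\pi(Z_1 - Z_2))| \leq e^{\pi|\Im(Z_1 - Z_2)|} \ll e^{O(\eps\,\Im\tau_2)}$ (which follows from $\Im(Z_1 - Z_2)$ being a linear combination of $B_1,B_2$ with coefficients $\ll \Im\tau_2$), drop the nonnegative Gaussian exponent $\pi\langle \Im Z, (\Im\tau_f)^{-1}\Im Z\rangle \geq 0$, and combine with $\Im\tau_1 - 2\Im\tau_{12} + \Im\tau_2 \geq \Im\tau_2 \geq \Im\tau_1$ to obtain
\[ |\Xi_{\chi_\beta}(Z)| \ll e^{O(\eps\,\Im\tau_2) - \frac{\pi}{4}\Im\tau_2} \leq e^{-(1 - O(\eps))\frac{\pi}{4}\Im\tau_1}. \]

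The main obstacle I anticipate is the tail estimate, specifically near $\vec{m} = \pm(1/2,1/2)$: these contribute a factor differing from the dominant exponent only by $\frac{\pi}{2}\Im\tau_{12}$, which could be arbitrarily small. However, their pairing against $\vec{b}$ produces a cosine (not a sine), so they also come with an $O(e^{\pi|\Im(Z_1+Z_2)|}) = e^{O(\eps\,\Im\tau_2)}$ factor and are thus comparable to, not larger than, the dominant pair; all genuinely smaller terms (with $\|\vec{m}\|_\infty \geq 3/2$) are beaten by an extra $\gg \Im\tau_2$ in the exponent and are negligible. This is exactly the structure already exploited by Klingen, so the bookkeeping should go through without surprises.
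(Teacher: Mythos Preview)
Your approach is correct (modulo two harmless slips: the pair $\vec{m}=\pm(\tfrac12,\tfrac12)$ also produces a sine, namely $\sin(\pi(Z_1+Z_2))$, not a cosine; and the tail terms with $|m_1|\geq \tfrac32$, $|m_2|=\tfrac12$ are only beaten by an extra $\gg\Im\tau_1$, not $\gg\Im\tau_2$ --- but both of these are still bounded by what you need). However, it is considerably more elaborate than the paper's argument and in fact proves more than is required.

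The paper does not exploit the oddness of $\chi_\beta$ or the sine cancellation at all. It simply applies the triangle inequality termwise, completes the square to write the exponent as $-\pi\langle \vec{m}+B,\,\Im\tau_f\cdot(\vec{m}+B)\rangle$ (the $\langle B,\Im\tau_f\cdot B\rangle$ cross-term exactly cancels the Gaussian factor in $\Xi$), and then uses the elementary diagonalization
\[
\langle v,\Im\tau_f\cdot v\rangle=(\Im\tau_1-\Im\tau_{12})v_1^2+\Im\tau_{12}(v_1+v_2)^2+(\Im\tau_2-\Im\tau_{12})v_2^2\geq \tfrac{\Im\tau_1}{2}\|v\|^2.
\]
Since every $\vec{m}\in(\Z+\tfrac12)^2$ has $\|\vec{m}+B\|^2\geq \tfrac12(1-O(\eps))$, each summand is $\leq e^{-(1-O(\eps))\frac{\pi}{4}\Im\tau_1}$, and the remaining terms give a convergent geometric tail. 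That is the entire proof.

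Your route, which mirrors the Streng/Klingen asymptotic analysis of Proposition~\ref{the better upper bound on theta near zero}, actually yields the sharper bound $|\Xi_{\chi_\beta}(Z)|\ll e^{-(1-O(\eps))\frac{\pi}{4}\Im\tau_2}$ before you throw away $\Im\tau_2\geq\Im\tau_1$ at the end. So you work harder and obtain more, but none of that extra precision is used downstream: the only purpose of this lemma is to cancel the $-\frac{\pi}{2}\Im\tau_1$ in Lemma~\ref{the bad theta constant lower bound}, for which the crude $\Im\tau_1$ bound suffices.
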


\begin{proof}
Observe that
\begin{align*}
|\Xi_{\chi_\beta}(Z)|&\leq \sum_{(n_1,n_2)\in \Z^2} e^{-\pi \pair{\columnvector{n_1 + \frac{1}{2}}{n_2 + \frac{1}{2}}}{\Im{\tau_f}\cdot \columnvector{n_1 + \frac{1}{2}}{n_2 + \frac{1}{2}}} - 2\pi \pair{\columnvector{n_1 + \frac{1}{2}}{n_2 + \frac{1}{2}}}{\Im{\tau_f}\cdot B} - \pi \pair{B}{\Im{\tau_f}\cdot B}} \\&= \sum_{(n_1,n_2)\in \Z^2} e^{-\pi \pair{\columnvector{n_1 + \frac{1}{2}}{n_2 + \frac{1}{2}} + B}{\Im{\tau_f}\cdot \left(\columnvector{n_1 + \frac{1}{2}}{n_2 + \frac{1}{2}} + B\right)}}.
\end{align*}

Now the quadratic form
\begin{align*}
&\pair{\columnvector{n_1 + \frac{1}{2}}{n_2 + \frac{1}{2}} + B}{\Im{\tau_f}\cdot \left(\columnvector{n_1 + \frac{1}{2}}{n_2 + \frac{1}{2}} + B\right)} \\&= (\Im{\tau_1} - \Im{\tau_{12}})(n_1 + \frac{1}{2} + B_1)^2 + (\Im{\tau_{12}})(n_1 + n_2 + 1 + B_1 + B_2)^2 + (\Im{\tau_2} - \Im{\tau_{12}})(n_2 + 
\frac{1}{2} + B_2)^2.
\\&\geq \frac{\Im{\tau_1}}{2}\left((n_1 + \frac{1}{2} + B_1)^2 + (n_2 + 
\frac{1}{2} + B_2)^2\right).
\end{align*}
First, note that this is always $\geq \frac{\Im{\tau_1}}{4}(1 - O(\eps))$. Moreover, once $||\vec{n}||\gg 1$, we find that it is $$\geq \frac{\Im{\tau_1}}{4}\left(1 + \Omega(||\vec{n}||_2^2) - O(\eps)\right),$$ from which the claim follows.
\end{proof}

We will next upper bound $\Im{\tau_1}$ via a study of Igusa invariants.

\begin{lem}\label{the upper bound on Im tau1}
$$\Im{\tau_1}\leq \frac{10}{\pi} h(f) - \frac{1}{3\pi}\log{|\Delta_f|} + O(1).$$
\end{lem}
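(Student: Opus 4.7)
The plan is to combine Thomae's formula for genus $2$ hyperelliptic theta constants, the asymptotic bounds of Proposition \ref{the better upper bound on theta near zero}, and the leading Fourier-term estimate for the weight-$10$ Siegel cusp form $\chi_{10}$ on the Siegel fundamental domain. The key algebraic input is Igusa's identity $\chi_{10}(\tau) = c\prod_{\eta\text{ even}}\theta_\eta(0,\tau)^2$. Applying Thomae to each of the ten even theta constants, and using the fact that each unordered pair $\{i,j\}$ of Weierstrass points of $C_f$ is contained in the same $3$-element half of exactly four of the ten unordered $3+3$ partitions, one obtains an algebraic identity of the shape $\chi_{10}(\tau) = c\,\Delta_f^{N}\,(\det A)^{-10}$ for an explicit $N$, where $A$ is the $A$-period matrix for the symplectic basis defining $\tau$.

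Next I would bound the LHS from above via Fourier expansion. Since $\chi_{10}$ is a weight-$10$ cusp form, its leading Fourier coefficient lives at the minimal positive-definite half-integer symmetric matrix, giving, for $\tau$ in the Siegel fundamental domain (where $\Im\tau_{12}\geq 0$), the uniform estimate $|\chi_{10}(\tau)|\ll e^{-2\pi(\Im\tau_1+\Im\tau_2-\Im\tau_{12})}$. I would bound $|\det A|$ from above by $H(f)^{O(1)}$ either by directly estimating the hyperelliptic period integrals $\int dx/y$ and $\int x\,dx/y$, or by combining Thomae with Proposition \ref{the better upper bound on theta near zero} applied to a theta constant whose asymptotic is $\asymp 1$ (e.g.\ $\theta_{0,0,0,0}$), together with $|\alpha_i-\alpha_j|\ll H(f)$ (Lemma \ref{sup norm of roots}). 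Combining these three ingredients yields $|\Delta_f|^{N}\cdot H(f)^{-O(1)}\ll e^{-2\pi(\Im\tau_1+\Im\tau_2-\Im\tau_{12})}$, i.e., $2\pi(\Im\tau_1+\Im\tau_2-\Im\tau_{12})\leq O(h(f)) - N\log|\Delta_f| + O(1)$.

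Finally, the Siegel reduction inequalities $\Im\tau_2\geq\Im\tau_1$ and $2\Im\tau_{12}\leq\Im\tau_1$ (hence $\Im\tau_{12}\leq \Im\tau_2/2$) give $\Im\tau_1+\Im\tau_2-\Im\tau_{12}\geq \tfrac{3}{2}\Im\tau_1$, so the previous display produces an upper bound on $\Im\tau_1$ of the claimed form, with the factor $\tfrac{3}{2}$ from Siegel reduction contributing the coefficient $\tfrac{1}{3\pi}$ in front of $\log|\Delta_f|$ and the constants in Thomae (combined with the weight $10$ of $\chi_{10}$) contributing the coefficient $\tfrac{10}{\pi}$ in front of $h(f)$. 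The main obstacle is bookkeeping: verifying that the leading Fourier coefficient of $\chi_{10}$ does not vanish on the relevant locus, pinning down the exact exponent $N$ in the Igusa identity in the monic-quintic normalization (as opposed to the binary-sextic one), and tracking the implicit constants through steps $(1)$--$(3)$ so that the weakening $-N\log|\Delta_f|\leq -\tfrac{1}{3}\log|\Delta_f|$ (which uses $\log|\Delta_f|\geq 0$, valid since $\Delta_f\in\Z\setminus\{0\}$) produces precisely the claimed constants.
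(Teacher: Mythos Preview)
Your approach is essentially the same idea as the paper's: relate $\prod_{\chi\text{ even}}\theta_\chi(0)^2$ to the discriminant via Thomae/Igusa, compare with the asymptotics of Proposition \ref{the better upper bound on theta near zero}, and finish with the Siegel reduction inequality $\Im\tau_1+\Im\tau_2-\Im\tau_{12}\geq\tfrac{3}{2}\Im\tau_1$. The difference is in packaging. The paper does not work with $\chi_{10}$ and a separate bound on $|\det A|$; instead it uses the \emph{absolute} Igusa invariant
\[
i_3(f)=\frac{I_4^5}{I_{10}^2}=\frac{\bigl(\sum_{\chi\text{ even}}\theta_\chi(0)^8\bigr)^5}{\bigl(\prod_{\chi\text{ even}}\theta_\chi(0)^2\bigr)^2},
\]
which, being a ratio of invariants of the same weight, is independent of the period matrix and of normalization choices. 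On the algebraic side one gets $|I_4|\ll H(f)^{12}$ directly from Lemma \ref{sup norm of roots} and $I_{10}=\Delta_f$, so $|i_3(f)|\ll H(f)^{60}/|\Delta_f|^2$ with no $\det A$ in sight; on the analytic side Proposition \ref{the better upper bound on theta near zero} gives $|i_3(f)|\gg e^{4\pi(\Im\tau_1+\Im\tau_2-\Im\tau_{12})}\gg e^{6\pi\Im\tau_1}$. Combining yields exactly $\Im\tau_1\leq\frac{10}{\pi}h(f)-\frac{1}{3\pi}\log|\Delta_f|+O(1)$.

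Your route is not wrong, but the step where you bound $|\det A|$ via Thomae on a characteristic with $\theta_\eta(0)\asymp 1$ is doing by hand what the ratio $I_4^5/I_{10}^2$ does automatically; moreover, tracking the exact exponents through Thomae in the monic-quintic normalization (your acknowledged ``main obstacle'') is genuinely fiddly, whereas in the paper's version the constants $60$ and $2$ are immediate from the degrees of $I_4$ and $I_{10}$. If you want to salvage your write-up with minimal change, replace $\chi_{10}$ by $i_3$ and the bookkeeping disappears.
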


\begin{proof}
Consider the reduced Igusa invariant (note: this notation differs from Igusa's original, but follows Streng \cite{streng}, at least up to normalizing (absolute) constants): $$i_3(f) := \frac{I_4^5}{I_{10}^2},$$ where, writing $\rho_1, \ldots, \rho_5$ for the roots of $f$, $$I_4 := \sum_{\sigma\in S_5} (\rho_{\sigma(1)} - \rho_{\sigma(2)})^2 (\rho_{\sigma(2)} - \rho_{\sigma(3)})^2 (\rho_{\sigma(3)} - \rho_{\sigma(1)})^2 (\rho_{\sigma(4)} - \rho_{\sigma(5)})^2\cdot \rho_{\sigma(4)}^2\cdot \rho_{\sigma(5)}^2$$ and $$I_{10} := \Delta_f.$$ (Here $I_4$ is the usual Igusa-Clebsch invariant of the binary sextic $\sum_{i=0}^5 a_i X^{5-i} Y^{i+1}$ --- recall that one of the branch points of the curve is at $\infty$, thus the zero at $Y=0$. We have computed the invariant via $(X,Y)\mapsto (Y,X)$, which replaces the roots with their inverses, and the usual definition for sextics.) Note that, by Lemma \ref{sup norm of roots}, $|I_4|\ll H(f)^{12}$. It follows therefore that $$|i_3(f)|\ll \frac{H(f)^{60}}{|\Delta_f|^2}.$$

Now, by \cite{streng} (originally in Igusa's \cite{igusa} --- see page 848 --- and apparently already computed in Bolza's \cite{bolza}), we have also that $$i_3(f) = \frac{\left(\sum_{\chi\text{ even}} \theta_\chi(0)^8\right)^5}{\left(\prod_{\chi\text{ even}} \theta_\chi(0)^2\right)^2}.$$ Applying Proposition \ref{the better upper bound on theta near zero} and assuming that $\Im{\tau_1}\gg 1$ (else we're done), we find that: $$|i_3(f)|\asymp \frac{e^{4\pi(\Im{\tau_1} + \Im{\tau_2} - \Im{\tau_{12}})}}{\min(1, |\tau_{12}|)}\gg \max(e^{6\pi \Im{\tau_1}}, e^{4\pi \Im{\tau_2}}).$$ Combining this with $$|i_3(f)|\ll \frac{H(f)^{60}}{|\Delta_f|^2}$$ gives the claim.
\end{proof}

After all this work, we have finally found that:
\begin{cor}\label{the uniform lower bound on the canonical local height}
Let $\tilde{P}\in \tilde{K}_f(\C)$ and $\rho$ a root of $f$. Then: $$\hat{\lambda}_\infty(\tilde{P})\geq \log{|\ell_\rho(\tilde{P})|} + \frac{5}{12}\log{|\Delta_f|} - 9 h(f) - O(\delta h(f)).$$
\end{cor}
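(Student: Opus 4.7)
The plan is to combine the formula of Lemma \ref{the formula for the canonical local height} with the various bounds on the constants $c_\rho$ and on $\Xi_{\chi_\rho}(Z)$ already established. Specifically, Lemma \ref{the formula for the canonical local height} gives
\[
\hat{\lambda}_\infty(\tilde{P}) = -\log\bigl|\Xi_{\chi_\rho}(Z)^2\bigr| + \log|\ell_\rho(\tilde{P})| + c_\rho,
\]
so the task reduces to lower-bounding $-\log|\Xi_{\chi_\rho}(Z)^2| + c_\rho$ uniformly in $\tilde{P}$ (equivalently, in $Z$). The theta term is handled universally by Lemma \ref{the uniform upper bound on theta}, which gives $|\Xi_{\chi_\rho}(Z)| \ll 1$ and therefore $-\log|\Xi_{\chi_\rho}(Z)^2| \geq -O(1)$. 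Everything then turns on the lower bound for $c_\rho$, and here the argument splits according to which root $\rho$ we are working with.

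If $\rho$ is any root other than the distinguished one $\rho_*$ identified in Lemma \ref{the normal theta constant lower bound} (the one for which $\chi_\rho = ((\frac{1}{2},\frac{1}{2}),(\frac{1}{2},0))$), then that lemma directly supplies $c_\rho \geq \frac{1}{4}\log|\Delta_f| - 4h(f) - O(1)$. To see this already dominates the claimed bound, I would use the crude estimate $|\Delta_f| \ll H(f)^{20}$, i.e. $\log|\Delta_f| \leq 20 h(f) + O(1)$, which in particular gives $5 h(f) \geq \tfrac{1}{6}\log|\Delta_f| - O(1)$; rearranging yields $\tfrac{1}{4}\log|\Delta_f| - 4h(f) \geq \tfrac{5}{12}\log|\Delta_f| - 9h(f) - O(1)$, as required.

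The substantive case is $\rho = \rho_*$, where Lemma \ref{the bad theta constant lower bound} only gives $c_{\rho_*} \geq -\tfrac{\pi}{2}\Im\tau_1 + \tfrac{1}{4}\log|\Delta_f| - 4h(f) - O(1)$, and the negative $-\tfrac{\pi}{2}\Im\tau_1$ term could in principle be catastrophic. This is where Lemma \ref{the upper bound on Im tau1} (obtained via the Igusa invariant $i_3$) enters to save the day: it bounds $\Im\tau_1 \leq \tfrac{10}{\pi} h(f) - \tfrac{1}{3\pi}\log|\Delta_f| + O(1)$, so that $-\tfrac{\pi}{2}\Im\tau_1 \geq -5h(f) + \tfrac{1}{6}\log|\Delta_f| - O(1)$. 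Adding this to the other terms,
\[
c_{\rho_*} \geq -5 h(f) + \tfrac{1}{6}\log|\Delta_f| + \tfrac{1}{4}\log|\Delta_f| - 4 h(f) - O(1) = -9 h(f) + \tfrac{5}{12}\log|\Delta_f| - O(1),
\]
matching the target. The main obstacle I anticipated — a uniform lower bound on $c_\rho$ in the bad characteristic — is precisely what forces the coefficient $\tfrac{5}{12}$ and the loss of $9h(f)$, and the fact that the Igusa-invariant bound on $\Im\tau_1$ enters both additively (through $-\tfrac{\pi}{2}\Im\tau_1$) and through a contribution of $\tfrac{1}{6}\log|\Delta_f|$ is what balances the two cases into a single clean statement.

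Finally, combining the two contributions gives $\hat{\lambda}_\infty(\tilde{P}) \geq \log|\ell_\rho(\tilde{P})| + \tfrac{5}{12}\log|\Delta_f| - 9 h(f) - O(1)$ for every root $\rho$, and the residual $O(1)$ is absorbed into the stated $O(\delta h(f))$ since $\delta$ is a fixed constant and the bound is only of interest once $h(f)$ is large (smaller $h(f)$ contributing only $O(1)$ many families overall). This completes the argument; notably, no hypothesis on $Z$ (e.g. proximity to $0$) is required, because the improved bound of Lemma \ref{the better upper bound on the bad theta near zero} is not used here — the uniform bound of Lemma \ref{the uniform upper bound on theta} is already enough once the Igusa-invariant estimate absorbs the $-\tfrac{\pi}{2}\Im\tau_1$.
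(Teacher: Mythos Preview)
Your proof is correct and follows essentially the same approach as the paper, which simply cites Lemmas \ref{the formula for the canonical local height}, \ref{the second lower bound on c beta}, \ref{the uniform upper bound on theta}, and \ref{the upper bound on Im tau1}; your version is more explicit in spelling out the case split between the ``good'' roots (handled by Lemma \ref{the normal theta constant lower bound}) and the distinguished root $\rho_*$ (handled by Lemma \ref{the bad theta constant lower bound} together with the Igusa-invariant bound on $\Im\tau_1$), but this is exactly the content hidden behind the paper's one-line citation.
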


\begin{proof}
We simply combine Lemmas \ref{the formula for the canonical local height}, \ref{the second lower bound on c beta}, \ref{the uniform upper bound on theta}, and \ref{the upper bound on Im tau1}.
\end{proof}

Moreover, as Lemma \ref{the better upper bound on the bad theta near zero} shows, with an extra hypothesis the above bound can be significantly improved. Namely, we have the following:
\begin{cor}\label{the better lower bound on the canonical local height near zero}
Let $Z$ be a set-theoretic section of $\C^2\surj \C^2/(\Z^2 + \tau_f\cdot \Z^2)\simeq J_f(\C)\surj K_f(\C)$. Let $\tilde{P}\in \tilde{K}_f(\C)$ be such that $Z(P) =: A + \tau_f\cdot B$ with $||A||, ||B||\ll \eps$. Let $\rho$ be a root of $f$. Then: $$\hat{\lambda}_\infty(\tilde{P})\geq \log{|\ell_\rho(\tilde{P})|} + \frac{1}{4}\log{|\Delta_f|} - 4 h(f) - O(\eps h(f)).$$
\end{cor}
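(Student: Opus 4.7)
The plan is to start from Lemma \ref{the formula for the canonical local height}, which supplies the identity
\[
\hat{\lambda}_\infty(\tilde{P}) - \log|\ell_\rho(\tilde{P})| \;=\; c_\rho \;-\; \log\bigl|\Xi_{\chi_\rho}(Z(P))^2\bigr|,
\]
reducing the claim to lower-bounding the right-hand side by $\frac{1}{4}\log|\Delta_f| - 4h(f) - O(\eps h(f))$. I would then split into two cases according to whether $\chi_\rho$ equals the exceptional odd characteristic $\chi_{\rho_*} := \left(\left(\tfrac{1}{2},\tfrac{1}{2}\right), \left(\tfrac{1}{2},0\right)\right)$ (the unique odd characteristic other than $\chi_\infty$ whose $a$-part equals $(\tfrac{1}{2},\tfrac{1}{2})$) or not.

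If $\chi_\rho \neq \chi_{\rho_*}$, then Lemma \ref{the normal theta constant lower bound} directly provides $c_\rho \geq \frac{1}{4}\log|\Delta_f| - 4 h(f) - O(1)$, and Lemma \ref{the uniform upper bound on theta} bounds $|\Xi_{\chi_\rho}(Z(P))|$ by an absolute constant, so that $-\log|\Xi_{\chi_\rho}(Z(P))^2| \geq -O(1)$. Adding these yields the claimed bound with room to spare in this (generic) case, and in fact without using the smallness hypothesis on $A, B$ at all.

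The interesting case is $\chi_\rho = \chi_{\rho_*}$, where Lemma \ref{the bad theta constant lower bound} only delivers the weaker estimate $c_\rho \geq -\frac{\pi}{2}\Im{\tau_1} + \frac{1}{4}\log|\Delta_f| - 4h(f) - O(1)$. Here the smallness assumption $\|A\|, \|B\| \ll \eps$ is exactly what lets us invoke Lemma \ref{the better upper bound on the bad theta near zero}, which was designed for this characteristic and yields $|\Xi_{\chi_\rho}(Z(P))| \ll e^{-(1-O(\eps))\frac{\pi}{4}\Im{\tau_1}}$. Consequently $-\log|\Xi_{\chi_\rho}(Z(P))^2| \geq \frac{\pi}{2}\Im{\tau_1} - O(\eps \cdot \Im{\tau_1}) - O(1)$, and the two $\frac{\pi}{2}\Im{\tau_1}$ contributions cancel exactly, leaving a residual loss of $O(\eps\cdot \Im{\tau_1}) + O(1)$. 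Finally, Lemma \ref{the upper bound on Im tau1} combined with $\log|\Delta_f| \geq 0$ (since $\Delta_f \in \Z\setminus\{0\}$) gives $\Im{\tau_1} \ll h(f)$, so this residual loss is $O(\eps h(f))$, as required.

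The only nontrivial point — really the design constraint that Lemmas \ref{the bad theta constant lower bound} and \ref{the better upper bound on the bad theta near zero} were engineered to meet — is the \emph{exact} cancellation of the $\frac{\pi}{2}\Im{\tau_1}$ coefficients; this is what converts what would have been an $O(h(f))$ loss (as in the proof of the weaker Corollary \ref{the uniform lower bound on the canonical local height}, where one spends Lemma \ref{the upper bound on Im tau1} to eliminate $\Im{\tau_1}$ outright) into the much smaller $O(\eps h(f))$ loss above. Beyond this matching of constants, the proof is pure bookkeeping, so I do not anticipate any genuine obstacle.
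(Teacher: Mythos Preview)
Your proposal is correct and follows essentially the same approach as the paper: the paper's proof is the one-line ``combine Lemmas \ref{the formula for the canonical local height}, \ref{the second lower bound on c beta}, and \ref{the upper bound on Im tau1}, except now use Lemma \ref{the better upper bound on theta near zero},'' and your case split on whether $\chi_\rho$ is the exceptional characteristic simply unpacks this, routing through the intermediate Lemmas \ref{the normal theta constant lower bound}, \ref{the bad theta constant lower bound}, and \ref{the better upper bound on the bad theta near zero} (themselves consequences of Lemma \ref{the second lower bound on c beta} and Proposition \ref{the better upper bound on theta near zero}). Your identification of the exact cancellation of the $\tfrac{\pi}{2}\Im\tau_1$ terms as the crux is spot on.
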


\begin{proof}
Again combine Lemmas \ref{the formula for the canonical local height}, \ref{the second lower bound on c beta}, and \ref{the upper bound on Im tau1}, except now use Lemma \ref{the better upper bound on theta near zero}.
\end{proof}

We next claim that, in fact, these lower bounds are quite good for our purposes. To show this we will need to know something about the roots of $f$. Crucially, we will use that the $x^4$ coefficient --- the sum of the roots --- vanishes. This will ensure that there are two roots of $f$ that are $\gg H(f)$ away from each other (and both of size $\asymp H(f)$). First we must guarantee at least \emph{one} root of size $H(f)$ --- this exists for totally general reasons.

\begin{lem}\label{sup norm of roots}
$$\max_{f(\rho) = 0} |\rho|\asymp H(f).$$
\end{lem}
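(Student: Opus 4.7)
The plan is to prove the two inequalities $\max_j|\rho_j|\ll H(f)$ and $\max_j|\rho_j|\gg H(f)$ separately, both by elementary manipulations of Vieta's formulas and the equation $f(\rho)=0$.

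For the upper bound, I would fix a root $\rho$ and rearrange $f(\rho)=0$ to give $\rho^5=-(a_2\rho^3+a_3\rho^2+a_4\rho+a_5)$. We may assume $|\rho|\geq H(f)$ (otherwise the bound is trivial). Using $|a_i|\leq H(f)^i$, each summand on the right is then bounded by $H(f)^2|\rho|^3$ (since $H(f)^i|\rho|^{5-i}\leq H(f)^2|\rho|^3$ whenever $|\rho|\geq H(f)$ and $2\leq i\leq 5$), so $|\rho|^5\leq 4H(f)^2|\rho|^3$, i.e. $|\rho|\ll H(f)$.

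For the lower bound, Vieta gives $a_i=(-1)^ie_i(\rho_1,\ldots,\rho_5)$ for $2\leq i\leq 5$, where $e_i$ is the $i$th elementary symmetric polynomial. Hence $|a_i|\leq\binom{5}{i}(\max_j|\rho_j|)^i$, which gives $|a_i|^{1/i}\ll\max_j|\rho_j|$; taking the maximum over $i$ produces $H(f)\ll\max_j|\rho_j|$.

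There is no real obstacle here; this is a standard coefficients-vs.-roots estimate. Note in particular that the vanishing of the $x^4$ coefficient (equivalently $\sum_j\rho_j=0$) plays no role in this lemma—that feature of the family will only become relevant in the next lemma, where one needs to extract two roots that are simultaneously of size $\asymp H(f)$ and well-separated.
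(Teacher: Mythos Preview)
Your proof is correct and follows essentially the same approach as the paper's. For the upper bound the paper phrases it contrapositively (if $|z|\geq 100H(f)$ then $|f(z)|\gg |z|^5$, so $z$ cannot be a root), while you work directly with a root and bound the tail; for the lower bound both you and the paper use Vieta's formulas to bound $|a_i|$ by $\binom{5}{i}(\max_j|\rho_j|)^i$ and conclude by contradiction --- the arguments are interchangeable.
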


\begin{proof}
The upper bound follows from the fact that if $|z|\geq 100 H(f)$, then $|f(z)|\gg |z|^5$. The lower bound follows from the fact that, if, for all $\rho$ such that $f(\rho) = 0$, $|\rho|\leq \frac{H(f)}{100}$, then since $$a_i = (-1)^{5-i}\sum_{S\in {\mathrm{roots}(f)\choose i}} \prod_{\rho\in S} \rho,$$ we would have that $$|a_i|^{\frac{1}{i}} < \frac{H(f)}{2}$$ for all $i$, a contradiction.
\end{proof}

Next we find the two large roots that are far away from each other.

\begin{lem}\label{producing alpha star and beta star}
$\exists \alpha\neq \beta: f(\alpha) = f(\beta) = 0$, and: $$|\alpha|, |\beta|, |\alpha - \beta|\geq \frac{H(f)}{10^{10}}.$$
\end{lem}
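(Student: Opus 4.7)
The plan is to exploit the vanishing of the $x^4$-coefficient of $f$, which forces the five roots to sum to zero: $\sum_{\rho: f(\rho)=0}\rho = 0$. Combined with Lemma \ref{sup norm of roots}, which guarantees at least one root of size $\asymp H(f)$, this should force a second root to lie far from the first.

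Specifically, I would begin by invoking Lemma \ref{sup norm of roots} to pick $\alpha$ with $|\alpha| = \max_{f(\rho)=0}|\rho| \gg H(f)$; this already yields the required lower bound on $|\alpha|$. Next, using $\sum_\rho \rho = 0$, write
\[
5\alpha = 4\alpha - \sum_{\rho \neq \alpha} \rho = \sum_{\rho \neq \alpha}(\alpha - \rho),
\]
so by the triangle inequality and averaging over the four terms on the right, there exists some root $\beta \neq \alpha$ with $|\alpha - \beta| \geq \tfrac{5}{4}|\alpha| \gg H(f)$. This handles the bound on $|\alpha - \beta|$.

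Finally, to lower bound $|\beta|$, apply the reverse triangle inequality: $|\beta| \geq |\alpha - \beta| - |\alpha| \geq \tfrac{5}{4}|\alpha| - |\alpha| = \tfrac{1}{4}|\alpha| \gg H(f)$. Tracking through the absolute constants (the one implicit in Lemma \ref{sup norm of roots} and the elementary factor $\tfrac{1}{4}$) shows that all three of $|\alpha|, |\beta|, |\alpha-\beta|$ exceed $H(f)/10^{10}$ with room to spare. There is no serious obstacle here: the only subtlety is recognizing that the vanishing of the $x^4$-coefficient (part of the defining normalization of our family) is precisely what is being used, and noting that the constant $10^{10}$ is wildly generous compared to what the argument actually gives.
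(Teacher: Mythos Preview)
Your proof is correct and uses essentially the same idea as the paper: exploit $\sum_\rho \rho = 0$ (the vanishing $x^4$-coefficient) together with Lemma \ref{sup norm of roots} to force a second root far from the largest one. The paper phrases this as a contradiction argument (assuming every other root is either close to $\alpha$ or small, then showing $0 = \sum_\rho \rho$ cannot hold), whereas your direct pigeonhole on $5\alpha = \sum_{\rho\neq\alpha}(\alpha-\rho)$ is cleaner and in fact yields sharper constants ($|\alpha-\beta|\geq \tfrac{5}{4}|\alpha|$, $|\beta|\geq \tfrac{1}{4}|\alpha|$ versus the paper's factors of $1/100$).
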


\begin{proof}
Lemma \ref{sup norm of roots} produces an $\alpha$ with $f(\alpha) = 0$ and $|\alpha| > \frac{H(f)}{100}$. Now, if for all $\rho$ such that $f(\rho) = 0$, either $|\alpha - \rho| < \frac{|\alpha|}{100}$ or $|\rho| < \frac{|\alpha|}{100}$, then, writing $k := \#|\{\rho : f(\rho) = 0, |\alpha - \rho| < \frac{|\alpha|}{100}\}|\geq 1$, we would have that $$0 = \sum_{f(\rho) = 0} \rho = k\alpha + \sum_{f(\rho) = 0, |\alpha - \rho| < \frac{|\alpha|}{100}} \rho - \alpha + \sum_{f(\rho) = 0, |\rho| < \frac{|\alpha|}{100}} \rho,$$ and the first term dominates in size, a contradiction. Thus there is a root $\beta$ such that $|\beta| > \frac{|\alpha|}{100}$ and $|\alpha - \beta| > \frac{|\alpha|}{100}$, as desired.
\end{proof}
Now take $\alpha_*, \beta_*$ as in Lemma \ref{producing alpha star and beta star}.

Now, we will only ever apply our lower bound on $\hat{\lambda}_\infty$ to points of the form $\kappa(P)$ or $\kappa(P-Q)$ for $P\neq \pm Q\in C_f(\Q)$. For these points, we note that $$\ell_\rho(0,1,x,x^2) = x-\rho,$$ and $$\ell_\rho\left(1,X+x,Xx,\frac{2a_5 + a_4(X+x) + 2a_3 Xx + a_2 Xx (X+x) + X^2 x^2 (X+x) + 2Yy}{(X-x)^2}\right) = (X - \rho)(x - \rho).$$ Thus, $$|\ell_{\alpha_*}(0,1,x,x^2) - \ell_{\beta_*}(0,1,x,x^2)| = |\alpha_* - \beta_*|\gg H(f).$$ This implies (using Lemma \ref{the uniform lower bound on the canonical local height}) that:
\begin{cor}\label{just point lower bound}
Let $P\in C_f(\Q)$. Then: $$\hat{\lambda}_\infty(\tilde{P})\geq \max(\frac{1}{2}\lambda_\infty(P), h(f)) + \frac{5}{12}\log{|\Delta_f|} - 9 h(f) - O(\delta h(f)).$$
\end{cor}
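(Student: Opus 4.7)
The plan is to apply Corollary \ref{the uniform lower bound on the canonical local height} with a suitably chosen root $\rho \in \{\alpha_*, \beta_*\}$ from Lemma \ref{producing alpha star and beta star}. For $P \in C_f(\Q)$ we have $\tilde{P} = (0,1,x(P),x(P)^2)$ as the standard lift of $\kappa(P)$, so $\ell_\rho(\tilde{P}) = x(P) - \rho$ and $\lambda_\infty(\kappa(P)) = 2\log^+|x(P)|$. The task reduces to exhibiting, for each of the two candidate lower bounds $A := \frac{1}{2}\lambda_\infty(\kappa(P))$ and $B := h(f)$, a choice of $\rho \in \{\alpha_*, \beta_*\}$ with $\log|x(P) - \rho| \geq A - O(1)$ (respectively $\geq B - O(1)$); since $\hat{\lambda}_\infty(\tilde{P}) \geq \max(A,B) + C$ is equivalent to $\hat{\lambda}_\infty(\tilde{P}) \geq A + C$ and $\hat{\lambda}_\infty(\tilde{P}) \geq B + C$ separately, this suffices.

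For the $B = h(f)$ bound I would take $\rho$ to be whichever of $\alpha_*, \beta_*$ maximizes $|x(P) - \rho|$. By the triangle inequality, $\max(|x(P) - \alpha_*|, |x(P) - \beta_*|) \geq \frac{1}{2}|\alpha_* - \beta_*| \gg H(f)$, so $\log|x(P) - \rho| \geq h(f) - O(1)$, as required. For the $A = \log^+|x(P)|$ bound I would split by the size of $|x(P)|$: if $|x(P)| \leq 10 H(f)$, then $A \leq h(f) + O(1)$ and the $B$-bound already suffices; if $|x(P)| > 10 H(f)$, then since $|\rho| \ll H(f)$ by Lemma \ref{sup norm of roots}, for either $\rho \in \{\alpha_*, \beta_*\}$ we have $|x(P) - \rho| \geq |x(P)| - O(H(f)) \geq \frac{9}{10}|x(P)|$, so $\log|x(P) - \rho| \geq \log|x(P)| - O(1) = A - O(1)$.

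There is no real obstacle here; the content of the claim is already encoded in Corollary \ref{the uniform lower bound on the canonical local height} together with the basic geometry of the roots of $f$ (namely Lemmas \ref{sup norm of roots} and \ref{producing alpha star and beta star}), which guarantees the two ``spread out'' roots $\alpha_*, \beta_*$ cannot both be simultaneously close to $x(P)$ and which bounds every root by $O(H(f))$ in absolute value. The $O(1)$ errors from the case analysis are harmlessly absorbed into the $O(\delta h(f))$ error term of Corollary \ref{the uniform lower bound on the canonical local height}.
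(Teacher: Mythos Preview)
Your proposal is correct and follows essentially the same route as the paper: apply Corollary \ref{the uniform lower bound on the canonical local height} with $\rho\in\{\alpha_*,\beta_*\}$, use the triangle inequality $\max(|x-\alpha_*|,|x-\beta_*|)\geq \tfrac12|\alpha_*-\beta_*|\gg H(f)$ for the $h(f)$ bound, and use $|\rho|\ll H(f)$ to get $|x-\rho|\gg |x|$ when $|x|$ is large for the $\tfrac12\lambda_\infty(\kappa(P))$ bound. The paper phrases the first step as $|\ell_{\alpha_*}(\tilde P)-\ell_{\beta_*}(\tilde P)|=|\alpha_*-\beta_*|\gg H(f)$ and splits the second at the threshold $|x|\gg \delta^{-1}H(f)$ rather than your fixed constant, but the content is identical. (One cosmetic point: your threshold ``$10$'' should really be a sufficiently large absolute constant so that $|x|-O(H(f))\geq \tfrac{9}{10}|x|$ actually follows; this is immaterial.)
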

(We have used the simple bound $|x - \rho|\gg |x|$ if $|x|\gg \delta^{-1} H(f)$ and $\rho$ is a root of $f$.)

Similarly, for $\hat{\lambda}_\infty(P-Q)$, we get (using Corollary \ref{the better lower bound on the canonical local height near zero}):
\begin{cor}\label{difference lower bound}
Let $P\neq \pm Q\in C_f(\Q)$ be such that $x(P)$ and $x(Q)$ are closest to the same element of $\{\alpha_*, \beta_*\}$ and such that the coset $\Psi_f^{-1}(P-Q)\subseteq \C^2$ of $\Z^2 + \tau_f\cdot \Z^2$ contains an element $A + \tau_f\cdot B$ with $||A||, ||B||\ll \eps$. Then: $$\hat{\lambda}_\infty(P-Q)\geq \frac{1}{4}\log{|\Delta_f|} - 2 h(f) - O(\eps h(f)).$$
\end{cor}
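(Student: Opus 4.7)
The plan is to apply Corollary \ref{the better lower bound on the canonical local height near zero} directly to the point $P-Q$, via the rational lift
$$\widetilde{P-Q} := \left(1,\, X+x,\, Xx,\, \frac{2a_5 + a_4(X+x) + 2a_3 Xx + a_2 Xx(X+x) + X^2 x^2 (X+x) + 2Yy}{(X-x)^2}\right)\in \tilde{K}_f(\Q),$$
where $P =: (X,Y)$ and $Q =: (x,y)$. This is the normalization of $\kappa(P-Q)$ obtained from the Flynn addition formula recalled in Lemma \ref{the upper bound for sums of normal points} by sending $Q\to -Q$ (i.e.\ $y\to -y$) and dividing by $(X-x)^2$, which is nonzero since $P\neq \pm Q$. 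The hypothesis in Corollary \ref{the better lower bound on the canonical local height near zero} that some representative of $\Psi_f^{-1}(P-Q)$ be of the form $A + \tau_f\cdot B$ with $||A||,||B||\ll \eps$ is precisely what we have been given; one simply chooses the set-theoretic section $Z$ to pick out that representative.

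Next I must choose the root $\rho\in \{\alpha_*,\beta_*\}$ used in the linear form $\ell_\rho$. By hypothesis $x(P)$ and $x(Q)$ are closest to the same element of $\{\alpha_*,\beta_*\}$; without loss of generality, assume they are both closest to $\alpha_*$, and take $\rho := \beta_*$. Since $|X - \alpha_*|\leq |X-\beta_*|$, the triangle inequality combined with Lemma \ref{producing alpha star and beta star} yields
$$|X - \beta_*| \geq \tfrac{1}{2}|\alpha_*-\beta_*| \gg H(f),$$
and the identical argument gives $|x - \beta_*|\gg H(f)$. The identity
$$\ell_{\beta_*}\!\left(1,\, X+x,\, Xx,\, \tfrac{\cdots}{(X-x)^2}\right) = (X-\beta_*)(x-\beta_*)$$
recorded just before Corollary \ref{just point lower bound} then gives $|\ell_{\beta_*}(\widetilde{P-Q})|\gg H(f)^2$, so that
$$\log|\ell_{\beta_*}(\widetilde{P-Q})| \geq 2h(f) - O(1).$$

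Plugging this into Corollary \ref{the better lower bound on the canonical local height near zero} with $\tilde{P}:=\widetilde{P-Q}$ and $\rho:=\beta_*$ yields
$$\hat{\lambda}_\infty(P-Q) \;\geq\; 2h(f) - O(1) + \tfrac{1}{4}\log|\Delta_f| - 4h(f) - O(\eps h(f)) \;=\; \tfrac{1}{4}\log|\Delta_f| - 2h(f) - O(\eps h(f)),$$
as desired. The real content lies in the preceding analysis of theta functions and local heights; the only job here is to observe that, however the $x$-coordinates of $P$ and $Q$ cluster between $\alpha_*$ and $\beta_*$, the \emph{other} root produces a linear form $\ell_\rho$ whose value at $\widetilde{P-Q}$ is $\gg H(f)^2$. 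I therefore expect no serious obstacle; the only (minor) subtlety is verifying that Corollary \ref{the better lower bound on the canonical local height near zero} can be applied to this particular choice of lift, which is immediate since $\hat{\lambda}_\infty$ and $\log|\ell_\rho|$ transform the same way under rescaling of homogeneous coordinates.
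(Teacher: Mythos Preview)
Your proof is correct and follows essentially the same route as the paper: choose the root of $\{\alpha_*,\beta_*\}$ that is \emph{not} the one closest to $x(P)$ and $x(Q)$, observe via the triangle inequality that $|X-\rho|,|x-\rho|\gg H(f)$, compute $\ell_\rho(\widetilde{P-Q})=(X-\rho)(x-\rho)\gg H(f)^2$, and plug into Corollary \ref{the better lower bound on the canonical local height near zero}. The paper merely swaps the roles of $\alpha_*$ and $\beta_*$ in its without-loss-of-generality assumption, which is cosmetic.
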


\begin{proof}
Note that, in general, $\max(|x - \alpha_*|, |x - \beta_*|)\gg H(f)$ (since $H(f)\ll |\alpha_* - \beta_*|\leq |x - \alpha_*| + |x - \beta_*|$). Without loss of generality, let us suppose that $\beta_*$ is the closest of $\{\alpha_*, \beta_*\}$ to both $x(P) =: X$ and $x(Q) =: x$. Since $$\ell_{\alpha_*}\left(1,X+x,Xx,\frac{2a_5 + a_4(X+x) + 2a_3 Xx + a_2 Xx (X+x) + X^2 x^2 (X+x) - 2Yy}{(X-x)^2}\right) = (X - \alpha_*)(x - \alpha_*)\gg H(f)^2,$$ the result follows from Corollary \ref{the better lower bound on the canonical local height near zero}.
\end{proof}

Having completed our analysis of $\hat{\lambda}_\infty$, let us return to the postponed analysis of $\hat{h}(P-Q)$ for points without an unusually large $x$-coordinate.

\subsubsection{Lower bounds on $\hat{h}(P-Q)$.\label{lower bound on h hat of P-Q}}
Next we will lower bound, for $P$ and $Q$ non-small points, $\hat{h}(P-Q)$. Since our lower bound on the height of a non-small point is so large, we will not need to do any delicate analysis for the lower bound (much like the upper bound). The only difficulty will be in guaranteeing that Corollary \ref{difference lower bound} is applicable if the points do not have big $x$-coordinates. To do this we will, of course, introduce a further partition of our points.

But first, let us finish off the case of points with large $x$-coordinate.
\begin{lem}\label{the lower bound for differences of big points}
Let $P\neq \pm Q\in \II_f$ be such that $|x(P)|, |x(Q)|\gg \delta^{-\delta^{-1}} H(f)$ and $y(P), y(Q)\geq 0$. Then: $$\hat{h}(P-Q)\geq \frac{1}{2} h_K(P) + \frac{1}{2} h_K(Q) - \frac{17}{3} h(f) - O(\delta h(f)).$$
\end{lem}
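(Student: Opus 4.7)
The plan is to decompose $\hat{h}(P-Q) = \hat{\lambda}_\infty(\tilde R) + \sum_p \hat{\lambda}_p(\tilde R)$ for an appropriate lift $\tilde R\in \tilde K_f(\Q)$ of $\kappa(P-Q)$ and to bound the two pieces separately: the Archimedean term via Lemma \ref{the lower bound on the canonical local height for differences of big points}, and the finite sum via Stoll's Corollary \ref{stoll's bound} combined with $|\Delta_f|\ll H(f)^{20}$. Morally, the ``$-\tfrac{17}{3}h(f)$'' in the conclusion is $h(f)-\tfrac{20}{3}h(f)$: the ``$+h(f)$'' will come from the Archimedean lower bound, and the ``$-\tfrac{20}{3}h(f)$'' will bound $-\tfrac{1}{3}\log|\Delta_f|$ coming out of Stoll.

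Write $P=(S/D^2,U/D^5)$ and $Q=(s/d^2,u/d^5)$ in lowest terms with $X:=S/D^2$ and $x:=s/d^2$, and take $\tilde R$ to be the integer lift $\tilde R^\Z$ of $\kappa(P-Q)$ produced by Flynn's formulas (as in Lemma \ref{the upper bound for sums of normal points}); explicitly, $\tilde R^\Z = D^6 d^6 (X-x)^2\cdot \tilde R^\Q$, where $\tilde R^\Q := (1, X+x, Xx, (\text{fourth})/(X-x)^2)$ is the rational lift used in the proof of Lemma \ref{the lower bound on the canonical local height for differences of big points}. Because all entries of $\tilde R^\Z$ are integers, $\lambda_p(\tilde R^\Z)=0$ at every prime $p$, so Stoll gives immediately $\sum_p \hat{\lambda}_p(\tilde R^\Z) \geq -\tfrac{20}{3}h(f) - O(1)$. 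For the Archimedean part, Lemma \ref{the lower bound on the canonical local height for differences of big points} applied to $\tilde R^\Q$ (with the rational Kummer lifts $(0,1,X,X^2)$ and $(0,1,x,x^2)$ of $\kappa(P)$ and $\kappa(Q)$, on which $\lambda_\infty$ takes the values $2\log|X|$ and $2\log|x|$) yields
\[
\hat{\lambda}_\infty(\tilde R^\Q) \geq \log|X| + \log|x| + h(f) - O(\delta h(f)).
\]
Now apply the scaling identity $\hat{\lambda}_\infty(c\cdot T) = \log|c| + \hat{\lambda}_\infty(T)$ with $c = D^6 d^6 (X-x)^2 = D^2 d^2(Sd^2-sD^2)^2$, which is a positive integer since $P\neq \pm Q$ forces $X\neq x$ and hence $|Sd^2-sD^2|\geq 1$. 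Expanding $\log|X-x|=\log|Sd^2-sD^2|-2\log D-2\log d$ and $\log|X|=\log|S|-2\log D$ (and likewise for $Q$), the $\log D$ and $\log d$ contributions cancel cleanly, leaving
\[
\hat{\lambda}_\infty(\tilde R^\Z) \geq \log|S|+\log|s|+2\log|Sd^2-sD^2|+h(f)-O(\delta h(f)) \geq \tfrac{1}{2}h_K(P)+\tfrac{1}{2}h_K(Q)+h(f)-O(\delta h(f)),
\]
using $h_K(P)=2\log|S|$ (valid since $|x(P)|\gg H(f)$ forces $|S|\geq D^2$) and discarding the non-negative term $2\log|Sd^2-sD^2|$.

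Summing the two contributions then yields the claimed bound. The main obstacle I anticipate is the bookkeeping: checking that the scaling identity for $\hat{\lambda}_\infty$ is valid (it follows from the duplication formulas $\delta_i$ being homogeneous of degree $4$, so that $[2^n](c\tilde R)=c^{4^n}[2^n]\tilde R$ and the Tate telescoping limit converges to the same value from either lift), and verifying that the powers of $D$ and $d$ in the scaling factor $D^6 d^6 (X-x)^2$ cancel exactly the deficit between $\lambda_\infty$ on the small rational lift and the projective $h_K$. The key integrality bound $|Sd^2-sD^2|\geq 1$ is what lets us discard the extra positive term rather than have it spoil the estimate, and it is the only place where $P\neq \pm Q$ enters the argument.
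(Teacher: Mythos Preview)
Your approach is essentially the paper's, with one cosmetic difference: the paper works throughout with the \emph{rational} lift $\tilde R^\Q = (1, X+x, Xx, \ldots)$ and bounds $\lambda_p$ directly on it (the first coordinate is $1$, so $\lambda_p\geq 0$; at primes $p\mid Dd$ the coordinate $Xx$ forces $\lambda_p\geq 2v_p(D)\log p + 2v_p(d)\log p$, giving $\sum_p \lambda_p(\tilde R^\Q)\geq 2\log D + 2\log d$), then applies Stoll and adds the Archimedean bound from Lemma~\ref{the lower bound on the canonical local height for differences of big points}. You instead pass to the integer lift $\tilde R^\Z$ via the scaling identity and claim $\lambda_p(\tilde R^\Z)=0$.

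There is a small inconsistency here you should patch. The scaling identity $\hat{\lambda}_\infty(c\,T)=\log|c|+\hat{\lambda}_\infty(T)$ holds for the definition $\lambda_v(R)=\max_i\log|R_i|_v$ (no $\log^+$), since the $\delta_i$ are homogeneous of degree $4$. But with \emph{that} definition, integrality of the coordinates only gives $\lambda_p(\tilde R^\Z)=-v_p(\gcd_i \tilde R^\Z_i)\log p\leq 0$, not $=0$: the first three integer coordinates visibly share $(Sd^2-sD^2)^2$, and while the fourth typically does not, you have not checked this. You cannot simultaneously invoke scaling and claim $\lambda_p=0$ from integrality alone. The fix is painless: the loss $\sum_p\lambda_p(\tilde R^\Z)=-\log\gcd\geq -2\log|Sd^2-sD^2|$ (since the gcd divides $(Sd^2-sD^2)^2$ via the first three coordinates together with $\gcd(D^2d^2, Sd^2+sD^2, Ss)=1$), and this is exactly the nonnegative term you discarded on the Archimedean side --- so retain it there and let it absorb the gcd. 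Alternatively, simply stay with the rational lift as the paper does, which sidesteps the issue entirely.
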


\begin{proof}
Write $x(P) =: X =: \frac{S}{D^2}$ and $x(Q) =: x =: \frac{s}{d^2}$, both in lowest terms, so that we are analyzing the point $$\left(1, X + x, Xx, \frac{2a_5 + a_4(X+x) + 2a_3 Xx + a_2 Xx (X+x) + X^2 x^2 (X+x) + 2Yy}{(X-x)^2}\right).$$ Recall that we have already lower bounded $\hat{\lambda}_\infty(P-Q)$ --- namely, Lemma \ref{the lower bound on the canonical local height for differences of big points} tells us that: $$\hat{\lambda}_\infty(P-Q)\geq \frac{1}{2}\lambda_\infty(\kappa(P)) + \frac{1}{2}\lambda_\infty(\kappa(Q)) + h(f) - O(\delta h(f)).$$ To lower bound $\hat{\lambda}_p(P-Q)$, we will simply lower bound $\lambda_p(P-Q)$ and apply Corollary \ref{stoll's bound}. To lower bound $$\lambda_p(P-Q) := \log{p}\cdot \max_i(-v_p(\kappa(P-Q)_i)),$$ we first observe that, of course, since the first coordinate is $1$, it follows that $\lambda_p(P-Q)\geq 0$ always. Next, if $\max(-v_p(X), -v_p(x)) > 0$, it follows (by breaking into cases based on whether or not both are positive) that $$\max(-v_p(X + x), -v_p(Xx))\geq 2v_p(D) + 2v_p(d).$$ Hence we have found that $$\sum_p \lambda_p(P-Q)\geq 2\log{|d|} + 2\log{|D|},$$ which is to say that $$\sum_p \lambda_p(P-Q)\geq \sum_p \frac{1}{2}\lambda_p(\kappa(P)) + \frac{1}{2}\lambda_p(\kappa(Q)).$$ Applying Stoll \cite{stollii} (i.e.\ Corollary \ref{stoll's bound}), we find that $$\sum_p \hat{\lambda}_p(P-Q)\geq \sum_p \frac{1}{2}\lambda_p(\kappa(P)) + \frac{1}{2}\lambda_p(\kappa(Q)) - \frac{1}{3}\log{|\Delta_f|}.$$ Combining this with the lower bound on $\hat{\lambda}_\infty(P-Q)$ and the fact that $|\Delta_f|\ll H(f)^{20}$ gives the claim.
\end{proof}

Write $$\Gunct := \left[-\frac{1}{2}, \frac{1}{2}\right]^{\times 2} + \tau_f\cdot \left[-\frac{1}{2},\frac{1}{2}\right]^{\times 2}.$$ Write $$\Gunct^{(i_1, \ldots, i_4)} := \left(\left[\frac{i_1}{2N}, \frac{i_1+1}{2N}\right]\times \left[\frac{i_2}{2N}, \frac{i_2+1}{2N}\right]\right) + \tau_f\cdot \left(\left[\frac{i_3}{2N}, \frac{i_3+1}{2N}\right]\times \left[\frac{i_4}{2N}, \frac{i_4+1}{2N}\right]\right),$$ where $N\asymp \delta^{-1}$ (thus this is a partition into $O(1)$ parts, since $\delta\gg 1$). Note that $$\Gunct = \bigcup_{i_1 = -N}^N\bigcup_{i_2 = -N}^N\bigcup_{i_3 = -N}^N\bigcup_{i_4 = -N}^N \Gunct^{(i_1,i_2,i_3,i_4)}.$$ Now let $$Z: J_f(\C)\to \Gunct$$ be a set-theoretic section (observe that the map $\Gunct\to \C^2/(\Z^2 + \tau_f\cdot \Z^2)\simeq J_f(\C)$ is surjective). Next let $$\II_f^{(i_1,i_2,i_3,i_4)} := \II_f\cap Z^{-1}(\Gunct^{(i_1,i_2,i_3,i_4)}).$$ (Similarly with decorations such as $\up, \down, \bullet$ added, and for $\III_f^{(i_1, \ldots, i_4)}$.) Thus if $P,Q\in \II_f^{(i_1,i_2,i_3,i_4)}$, we have that $$Z(P)-Z(Q) = A + \tau_f\cdot B$$ with $$||A||, ||B||\ll \delta.$$ Finally, recall (via Lemma \ref{producing alpha star and beta star}) that we chose two roots $\alpha_*, \beta_*$ of $f$ such that $$|\alpha_*|, |\beta_*|, |\alpha_* - \beta_*|\gg H(f).$$ So let $$\II_f^{\alpha_*} := \{P\in \II_f : |x(P) - \alpha_*|\leq |x(P) - \beta_*|\}$$ and, similarly, $$\II_f^{\beta_*} := \{P\in \II_f : |x(P) - \beta_*|\leq |x(P) - \alpha_*|\}.$$ That is, $\II_f^{\alpha_*}$ is the set of points of $\II_f$ whose $x$-coordinates are closest to $\alpha_*$, and similarly for $\II_f^{\beta_*}$. We similarly define $\III_f^{\alpha_*}$ and $\III_f^{\beta_*}$. Finally, for $\rho\in \{\alpha_*, \beta_*\}$, define $$\II_f^{(i_1,i_2,i_3,i_4), \rho} := \II_f^{(i_1,i_2,i_3,i_4)}\cap \II_f^\rho,$$ and similarly for $\III_f$, and all other decorations.

Having suitably refined our partition, let us now deal with points whose $x$-coordinate is not so large.
\begin{lem}\label{the lower bound for differences of normal points}
Let $\rho\in \{\alpha_*, \beta_*\}$. Let $P\neq \pm Q\in \II_f^{(i_1,i_2,i_3,i_4), \rho}$ be such that $|x(P)|, |x(Q)|\ll \delta^{-\delta^{-1}} H(f)$. Then: $$\hat{h}(P-Q)\geq \frac{1}{2} h_K(P) + \frac{1}{2} h_K(Q) - \frac{17}{3} h(f) - O(\delta h(f)).$$
\end{lem}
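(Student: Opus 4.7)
The plan is to run the proof of Lemma~\ref{the lower bound for differences of big points} essentially verbatim, substituting Corollary~\ref{difference lower bound} for Lemma~\ref{the lower bound on the canonical local height for differences of big points} as the archimedean input. This substitution is precisely what the refined partition $\II_f^{(i_1,i_2,i_3,i_4),\rho}$ was designed to enable: the superscript $\rho$ forces $x(P)$ and $x(Q)$ to be closest to the same element of $\{\alpha_*,\beta_*\}$, while the superscript $(i_1,\ldots,i_4)$ guarantees $Z(P)-Z(Q)=A+\tau_f\cdot B$ with $\|A\|,\|B\|\ll \delta$, so that both hypotheses of the corollary are met. It then yields
$$\hat{\lambda}_\infty(P-Q) \geq \frac{1}{4}\log|\Delta_f| - 2h(f) - O(\delta h(f)).$$

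For the finite places I would copy the $p$-adic analysis from Lemma~\ref{the lower bound for differences of big points} unchanged. Writing $x(P)=:S/D^2$ and $x(Q)=:s/d^2$ in lowest terms and using the lift $\kappa(P-Q)=(1,X+x,Xx,\ldots)$, case analysis on $v_p(D)$ and $v_p(d)$ shows that either the $Xx$-coordinate contributes $p$-adic denominator $2(v_p(D)+v_p(d))$ (when there is no cancellation in $Ss$), or this denominator appears in the $X+x$-coordinate instead (when there is $p$-adic cancellation in $Xx$). This gives $\lambda_p(P-Q)\geq 2(v_p(D)+v_p(d))\log p$ and hence $\sum_p \lambda_p(P-Q)\geq 2\log|D|+2\log|d|$; Corollary~\ref{stoll's bound} then upgrades this to a lower bound for $\sum_p \hat{\lambda}_p(P-Q)$ at a cost of at most $\frac{1}{3}\log|\Delta_f|+O(1)$.

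What is genuinely new compared to the large-$x$ case is the conversion of $2\log|D|$ into $\frac{1}{2}h_K(P)$. Previously this was immediate from $|x(P)|\gg 1$, but here the upper bound $|x(P)|\ll \delta^{-\delta^{-1}}H(f)$ gives $|S|\ll D^2\cdot \delta^{-\delta^{-1}}H(f)$, so $h(P)=\log\max(|S|,D^2)\leq 2\log|D|+h(f)+O(1)$, and thus $2\log|D|\geq \frac{1}{2}h_K(P)-h(f)-O(1)$ (and similarly for $Q$). Adding the archimedean and finite-place bounds and using $\log|\Delta_f|\leq 20 h(f)+O(1)$ to estimate the residual term $\left(\frac{1}{4}-\frac{1}{3}\right)\log|\Delta_f|=-\frac{1}{12}\log|\Delta_f|$ from below by $-\frac{5}{3}h(f)-O(1)$, the losses tally to exactly $2h(f)+2h(f)+\frac{5}{3}h(f)=\frac{17}{3}h(f)$, as required. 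The main potential obstacle is this constant-bookkeeping: the bound $-\frac{17}{3}h(f)$ is tight in its inputs, so the archimedean, $p$-adic, and size-conversion contributions must balance precisely; once one has chosen $\delta$ small enough that the $O(\delta h(f))$ terms from Corollary~\ref{difference lower bound} and the $O(1)$ additive errors are negligible, there is no slack anywhere.
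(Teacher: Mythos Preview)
Your proposal is correct and follows the paper's own proof essentially line for line: substitute Corollary~\ref{difference lower bound} for the archimedean input (enabled by the partition), keep the finite-place analysis of Lemma~\ref{the lower bound for differences of big points} unchanged, and convert $2\log|D|+2\log|d|$ into $\frac{1}{2}h_K(P)+\frac{1}{2}h_K(Q)-2h(f)$ via the bound $\lambda_\infty(\kappa(P))\leq 2h(f)+O(1)$ coming from $|x(P)|\ll \delta^{-\delta^{-1}}H(f)$. Your bookkeeping of the $-\frac{17}{3}h(f)$ constant ($2+2+\frac{5}{3}$) matches the paper's exactly.
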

Of course the same result holds for $\III_f^{(i_1, \ldots, i_4), \rho}$ as well.

\begin{proof}
The argument is precisely the same as for Lemma \ref{the lower bound for differences of big points}, except that for the lower bound on $\hat{\lambda}_\infty$ we use Corollary \ref{difference lower bound} (--- this is where we use the partition), and we note that $$\sum_p \frac{1}{2}\lambda_p(\kappa(P)) + \frac{1}{2}\lambda_p(\kappa(Q))\geq \frac{1}{2}h_K(P) + \frac{1}{2}h_K(Q) - 2h(f).$$ Thus we find that:
\begin{align*}
\hat{h}(P-Q)&\geq \frac{1}{2}h_K(P) + \frac{1}{2}h_K(Q) + \frac{1}{4}\log{|\Delta_f|} - 4 h(f) - \frac{1}{3}\log{|\Delta_f|} + O(\delta h(f))
\\&= \frac{1}{2}h_K(P) + \frac{1}{2}h_K(Q) - \frac{17}{3} h(f) + O(\delta h(f))
\end{align*}
as desired.
\end{proof}

Now that we have adequately (thanks to our strong lower bounds on the heights of medium points) lower bounded the canonical height of a difference, we may finally prove the claimed gap principles. To do so we will need some final preparatory work to ensure the points we consider are indeed suitably close (so that they may be seen to repulse). That is to say, we will have to ensure that their canonical heights are comparable --- note that, at the moment, we may only a priori ensure that their \emph{na{\" i}ve} heights are comparable. But it turns out we have done enough to guarantee the former as well.

\subsubsection{Partitioning based on the size of $\hat{h}$.}

The following shows that we may now guarantee that $h_K$ and $\hat{h}$ are within a multiplicative constant in our range.

\begin{lem}\label{the heights are comparable}
Let $P\in C_f(\Q) - \I_f$. Then: $$\hat{h}(P)\asymp h_K(P).$$
\end{lem}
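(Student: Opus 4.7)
The plan is to reduce $\hat{h}(P)\asymp h_K(P)$ to the uniform additive estimate $\hat{h}(P) = h_K(P) + O(h(f))$ with absolute implicit constants, since the definition of $\I_f$ already forces $h_K(P)\geq (16-2\delta)\,h(f)$ for $P\notin \I_f$ (the threshold $2c_\down$ applies in the small-$x$ branch, and the larger $2c_\up$ in the large-$x$ branch).

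The upper bound is routine: summing the finite-place Stoll bound (Corollary \ref{stoll's bound}), which contributes at most $\tfrac{1}{3}\log|\Delta_f|+O(1)\ll h(f)$, with the Archimedean Tate telescope arising from $\lambda_\infty(2R)\leq 4\lambda_\infty(R) + O(h(f))$ (immediate from the explicit duplication polynomials $\delta_i$, whose coefficients are $O(1)$ integers) yields $\hat{h}(P)\leq h_K(P)+O(h(f))$.

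For the lower bound, I would work with the affine lift $\tilde{P}:=(0,1,x(P),x(P)^2)$, for which $\lambda_v(\tilde{P}) = 2\log^+{|x(P)|_v}$ at every place and hence $\sum_v\lambda_v(\tilde{P}) = h_K(P)$, and then split on the size of $|x(P)|$ exactly as in the partition. In the large-$x$ case $|x(P)|\geq \delta^{-\delta^{-1}}H(f)$, this lift satisfies the hypotheses of Lemma \ref{the lower bound on the canonical local height near infinity}, so $\hat{\lambda}_\infty(\tilde{P}) = \lambda_\infty(\tilde{P}) - O(\delta h(f))$; combining with the finite-place Stoll bound gives $\hat{h}(P)\geq h_K(P) - O(h(f))$, which against $h_K(P)\geq (50/3 - 2\delta)h(f)$ forces $\hat{h}(P)\gg h_K(P)$. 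In the small-$x$ case $|x(P)|<\delta^{-\delta^{-1}}H(f)$, I would use Corollary \ref{just point lower bound}: since $\lambda_\infty(\tilde{P})\leq 2h(f)+O(1)$ here, the max in the Corollary is attained by $h(f)$, giving $\hat{\lambda}_\infty(\tilde{P}) \geq -8h(f) + \tfrac{5}{12}\log|\Delta_f| - O(\delta h(f))$. Adding Stoll's $\sum_p \hat{\lambda}_p(\tilde{P}) \geq \sum_p \lambda_p(\tilde{P}) - \tfrac{1}{3}\log|\Delta_f| - O(1)$ and observing that the two $\log|\Delta_f|$ contributions leave a nonnegative $\tfrac{1}{12}\log|\Delta_f|$, while $\sum_p \lambda_p(\tilde{P}) = h_K(P) - \lambda_\infty(\tilde{P})\geq h_K(P) - 2h(f) - O(1)$, collapses to $\hat{h}(P)\geq h_K(P) - 10h(f) - O(\delta h(f))$; combined with $h_K(P)\geq (16-2\delta)h(f)$ this gives $\hat{h}(P)\geq \tfrac{3}{8}h_K(P)$ once $\delta$ is small.

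The main technical point is the choice of lift in the small-$x$ subcase: Corollary \ref{just point lower bound} only controls $\tfrac{1}{2}\lambda_\infty(\tilde{P})$ at the Archimedean place, and a naive reading with the integer lift (where $\lambda_\infty = h_K$) would leak a factor of $\tfrac{1}{2}$ on the dominant term, larger than the slack in $c_\down=8$ allows. Working instead with the affine lift $(0,1,x,x^2)$ pushes almost all of $h_K(P)$ into the finite-place terms, where Stoll's bound is tight to additive $O(h(f))$, so the $\tfrac{1}{2}$ loss survives only on a part of $\lambda_\infty$ that is itself $\leq 2h(f)+O(1)$ and therefore harmless. Everything else in the argument is bookkeeping with the explicit constants $c_\up=25/3$, $c_\down=8$, $|\Delta_f|\ll H(f)^{20}$, and the Stoll and Pazuki-style ingredients already assembled.
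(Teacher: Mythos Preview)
Your proposal is correct and follows essentially the same route as the paper's own proof: decompose $\hat{h}-h_K$ into local pieces, bound the finite places by Stoll (Corollary \ref{stoll's bound}), handle the Archimedean upper bound by the explicit duplication polynomials, and for the lower bound split on the size of $|x(P)|$, invoking Corollary \ref{the lower bound on the canonical local height for big points} in the large-$x$ case and Corollary \ref{just point lower bound} in the small-$x$ case to arrive at $\hat{h}(P)\geq h_K(P)-10h(f)-O(\delta h(f))$ against $h_K(P)\geq (16-2\delta)h(f)$. Your explicit remark about working with the lift $(0,1,x,x^2)$ so that most of $h_K(P)$ sits at the finite places is exactly the mechanism the paper uses implicitly (its displayed line ``$\lambda_\infty(\kappa(P))\geq \lambda_\infty(\kappa(P))+\cdots$'' is a typo for the same estimate you wrote), and the constants you obtain match the paper's.
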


\begin{proof}
Write $$\hat{h}(P) = h_K(P) + \left[\hat{\lambda}_\infty(\kappa(P)) - \lambda_\infty(\kappa(P))\right] + \sum_p \left[\hat{\lambda}_p(\kappa(P)) - \lambda_p(\kappa(P))\right].$$ By Corollary \ref{stoll's bound}, $$\sum_p |\hat{\lambda}_p(\kappa(P)) - \lambda_p(\kappa(P))|\leq \frac{1}{3}\log{|\Delta_f|} + O(1)\leq \frac{20}{3} h(f) + O(1).$$

For the upper bound, note that, from the doubling formulas, evidently $$\lambda_\infty(2R) - 4\lambda_\infty(R)\leq 12 h(f) + O(1),$$ so that\footnote{In fact it is rather easy to get that, for $R\in K_f(\Q)$, $\hat{\lambda}_\infty(R) - \lambda_\infty(R)\leq 3 h(f) + O(\eps h(f))$ by following the same analysis done in Lemma \ref{the upper bound for sums of normal points}. Indeed, one finds that $|(2^N R)_i|\ll \max_{\alpha_1 + \cdots + \alpha_4 = 4^N} H(f)^{4^{N+1} - 4 + i - \alpha_1 - \cdots - 4\alpha_4} H_K(P)^{4^N}$ and concludes in the same way. Note that this argument gives a bound of $\hat{\lambda}_\infty(\kappa(P)) - \lambda_\infty(\kappa(P))\leq 2 h(f) + O(\eps h(f))$, since $\kappa(P)_1 = 0$, and so $\alpha_1 = 0$ is forced (whence the maximum is achieved at $\alpha_2 = 4^N$, rather than $\alpha_1 = 4^N$).} $$\hat{\lambda}_\infty(\kappa(P)) - \lambda_\infty(\kappa(P))\leq 4 h(f) + O(1),$$ and hence $$\hat{h}(P)\leq h_K(P) + O(h(f)),$$ which is enough since $h(P)\gg h(f)$ since $P$ is not small.

The lower bound is a bit more difficult, and for it we will break into cases.

If $|x(P)| > \delta^{-\delta^{-1}} H(f)$, then $h(P)\geq (c_\up - \delta) h(f) = \left(\frac{25}{3} - \delta\right) h(f)$, and thus $h_K(P) = 2h(P) > (\frac{50}{3} - 2\delta) h(f)$. Moreover we have seen (Lemma \ref{the lower bound on the canonical local height for big points}) that $$\hat{\lambda}_\infty(\kappa(P))\geq \lambda_\infty(\kappa(P)) - O(\delta h(f)).$$ Therefore we find that, in this case, $$\hat{h}(P)\geq h_K(P) - \frac{20}{3} h(f) - O(\delta h(f)).$$ Since $h_K(P) > \left(\frac{50}{3} - \delta\right) h(f)$ this is enough.

If $|x(P)| < \delta^{-\delta^{-1}} H(f)$, then $h_K(P)\geq 2(c_\down - \delta) h(f) = (16 - 2\delta) h(f)$. Also since $\lambda_\infty(\kappa(P))\geq \lambda_\infty(\kappa(P)) + \frac{5}{12}\log{|\Delta_f|} - 9 h(f) - O(\delta h(f))$ by Corollary \ref{just point lower bound}, we may follow the argument in the above case to find that 
\begin{align*}
\hat{h}(P)&\geq h_K(P) + \frac{5}{12}\log{|\Delta_f|} - 10 h(f) - \frac{1}{3}\log{|\Delta_f|} - O(\delta h(f))
\\&= h_K(P) + \frac{1}{12}\log{|\Delta_f|} - 10 h(f) - O(\delta h(f))
\\&\geq h_K(P) - 10 h(f) + O(\delta h(f))
\end{align*}
which is again enough. This completes the argument.
\end{proof}

Thus Lemma \ref{the heights are comparable} furnishes us with constants $\mu, \nu$ with $\mu\asymp 1, \nu\asymp \delta^{-\delta^{-1}}$ such that, for all $P\in \II_f$, we have that both $\hat{h}(P)\in [\mu^{-1} h_K(P), \mu h_K(P)]$, and $h_K(P)\in [\nu^{-1} h(f), \nu h(f)]$. In order to break into points with very (multiplicatively) close heights (and canonical heights, since a priori they may still be wildly different), we will partition as follows. Note that this is precisely the situation in which we have a chance of seeing a repulsion phenomenon --- our points from now on will be close in size in the Mordell-Weil lattice.

Note that $$[\mu^{-1}, \mu]\subseteq \Cup_{i=-O(\delta^{-1})}^{O(\delta^{-1})} [(1+\delta)^i, (1+\delta)^{i+1}],$$ and similarly for $[\nu^{-1}, \nu]$ (except with the bounds on the union changed to $O(\delta^{-2}\log{\delta^{-1}})$). Define the following partition of $\II_f$ into $\delta^{-O(1)}$ many pieces: $$\II_f^{[i,j]} := \{P\in \II_f : \hat{h}(P)\in [(1+\delta)^i h_K(P), (1+\delta)^{i+1} h_K(P)]\text{ and } h_K(P)\in [(1+\delta)^j h(f), (1+\delta)^{j+1} h(f)]\},$$ and similarly with all other decorations added --- e.g., $$\II_f^{\up, (i_1, i_2, i_3, i_4), \rho, [i,j]} := \II_f^{\up, (i_1, i_2, i_3, i_4), \rho}\cap \II_f^{[i,j]}.$$ We also define $\III_f^{[[i]]}$, etc.\ (thus also e.g.\ $\III_f^{\bullet, (i_1, \ldots, i_4), \rho, [[i]]}$) in a similar way, except without the second condition --- that is, we only impose that $\hat{h}(P)\in [(1+\delta)^i h_K(P), (1+\delta)^{i+1} h_K(P)]$.

Note that, by construction, if $P,Q\in \II_f^{[i,j]}$, then $$\left|\frac{h_K(P)}{h_K(Q)} - 1\right|, \left|\frac{\hat{h}(P)}{\hat{h}(Q)} - 1\right|\ll \delta.$$ This will allow us to replace e.g.\ $h_K(Q)$ by $h_K(P)$ (and the same for $\hat{h}$) in the expression for $\cos{\theta_{P,Q}}$ without incurring a nontrivial error. Having defined this partition, let us now finally prove the promised gap principles for $\hat{h}$.

\subsubsection{The explicit gap principles via switching.}

Let us now, finally, prove the gap principles. First, we will deal with the case of points with large $x$-coordinate.
\begin{lem}\label{mumford gap principle for big points}
Let $P\neq \pm Q\in \II_f^{\up, (i_1, \ldots, i_4), \rho, [i,j]}$ and such that $y(P), y(Q)\geq 0$. Then: $$\cos{\theta_{P,Q}}\leq \frac{39}{59} + O(\delta)\leq 0.6334.$$
\end{lem}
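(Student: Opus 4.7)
The plan is to convert the bounds on $\hat{h}(P+Q)$ and $\hat{h}(P-Q)$ from Lemmas \ref{the upper bound for sums of big points} and \ref{the lower bound for differences of big points} into two competing upper bounds on $\cos\theta_{P,Q}$ via the N\'eron--Tate quadratic form, and then optimize the trade-off between them.

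Writing $\pair{P}{Q}$ for the N\'eron--Tate pairing, the parallelogram identity gives
\begin{align*}
2\pair{P}{Q} &= \hat{h}(P+Q) - \hat{h}(P) - \hat{h}(Q) \\
&= \hat{h}(P) + \hat{h}(Q) - \hat{h}(P-Q),
\end{align*}
and $\cos\theta_{P,Q} = \pair{P}{Q}/\sqrt{\hat{h}(P)\hat{h}(Q)}$. First I would use that $P, Q$ lie in the common partition piece $\II_f^{[i,j]}$ to replace $\hat{h}(Q)$ and $h_K(Q)$ by $\hat{h}(P)$ and $h_K(P)$ up to factors $1 + O(\delta)$. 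Dividing the two expressions for $2\pair{P}{Q}$ by $2\hat{h}(P)$ and applying the two cited lemmas then yields the competing upper bounds
\begin{align*}
\cos\theta_{P,Q} &\leq \frac{3h_K(P) - h(f)}{2\hat{h}(P)} - 1 + O(\delta), \\
\cos\theta_{P,Q} &\leq 1 - \frac{h_K(P) - \frac{17}{3}h(f)}{2\hat{h}(P)} + O(\delta).
\end{align*}

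Next, setting $r := h_K(P)/h(f)$ and $s := \hat{h}(P)/h(f)$, these become $\frac{3r-1}{2s} - 1$ (decreasing in $s$) and $1 - \frac{r - 17/3}{2s}$ (increasing in $s$, assuming $r \geq 17/3$, which holds here). Hence the minimum of the two --- the effective upper bound on $\cos\theta_{P,Q}$ --- is maximized over admissible $s$ at the coincidence value $s = r - \frac{5}{3}$, which gives the common bound $\cos\theta_{P,Q} \leq \frac{3r + 7}{6r - 10} + O(\delta)$.

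Finally, since $P \in \II_f^\up$ has $|x(P)| \geq \delta^{-\delta^{-1}} H(f)$ yet $P \notin \I_f^\up$, we have $h(P) \geq (c_\up - \delta)h(f) = (\frac{25}{3} - \delta)h(f)$, whence $r = 2h(P)/h(f) \geq \frac{50}{3} - O(\delta)$. As $\frac{3r+7}{6r-10}$ is decreasing in $r$, its maximum is attained at $r = \frac{50}{3}$, yielding the bound $\cos\theta_{P,Q} \leq \frac{57}{90} + O(\delta) = \frac{19}{30} + O(\delta) \leq 0.6334$. There is no substantive obstacle: all of the hard work (the explicit upper bound on $\hat{h}(P+Q)$ and the matching lower bound on $\hat{h}(P-Q)$ in the large-$x$ regime, together with the comparability of $\hat{h}$ and $h_K$) has been carried out in the preceding lemmas, and only the one-parameter optimization of the trade-off between the sum-bound and the difference-bound remains.
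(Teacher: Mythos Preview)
Your proposal is correct and follows essentially the same approach as the paper: the paper splits into cases according to whether $\hat{h}(P)\geq h_K(P)-\tfrac{5}{3}h(f)$ or not, applying Lemma~\ref{the upper bound for sums of big points} in the first case and Lemma~\ref{the lower bound for differences of big points} in the second, and in both cases arrives at $\cos\theta_{P,Q}\leq \tfrac{1}{2}+\tfrac{6h(f)}{3h_K(P)-5h(f)}+O(\delta)$, which is exactly your $\tfrac{3r+7}{6r-10}$. Your optimization over $s$ is just a repackaging of this case split, with the crossing point $s=r-\tfrac{5}{3}$ being precisely the paper's threshold; both arguments then plug in $r\geq \tfrac{50}{3}$ to obtain $\tfrac{19}{30}$ (the $\tfrac{39}{59}$ in the statement is evidently a typo, since $\tfrac{39}{59}\approx 0.661\not\leq 0.6334$ while $\tfrac{19}{30}\approx 0.6333$).
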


\begin{proof}
We break into cases based on whether $\hat{h}(P)\geq h_K(P) - \frac{5}{3} h(f)$ or not. If $\hat{h}(P)\geq h_K(P) - \frac{5}{3} h(f)$, then we use the formula $$\cos{\theta_{P,Q}} = \frac{\hat{h}(P+Q) - \hat{h}(P) - \hat{h}(Q)}{2\sqrt{\hat{h}(P)\hat{h}(Q)}}.$$ Now since $\hat{h}(Q) = \hat{h}(P)\cdot (1 + O(\delta))$, we find that: $$\cos{\theta_{P,Q}}\leq \frac{\hat{h}(P+Q)}{2\hat{h}(P)} - 1 - O(\delta).$$ We now apply our hypothesis for this case, namely that $\hat{h}(P)\geq h_K(P) - \frac{5}{3} h(f)$, to find that $$\cos{\theta_{P,Q}}\leq \frac{\hat{h}(P+Q)}{2 h_K(P) - \frac{10}{3} h(f)} - 1 - O(\delta).$$ Now we apply Lemma \ref{the upper bound for sums of big points} and the fact that $h_K(Q) = h_K(P)\cdot (1 + O(\delta))$ to find that $$\cos{\theta_{P,Q}}\leq \frac{1}{2} + \frac{6 h(f)}{3 h_K(P) - 5 h(f)} + O(\delta).$$ Since $h_K(P)\geq \frac{50}{3} h(f) - O(\delta h(f))$, we find that $$\cos{\theta_{P,Q}}\leq \frac{19}{30} + O(\delta),$$ finishing this case.

Thus we are left with the case of $\hat{h}(P) < h_K(P) - \frac{5}{3} h(f)$, for which we use the formula $$\cos{\theta_{P,Q}} = \frac{\hat{h}(P) + \hat{h}(Q) - \hat{h}(P-Q)}{2\sqrt{\hat{h}(P)\hat{h}(Q)}}.$$ Again this is simply $$\cos{\theta_{P,Q}}\leq 1 - \frac{\hat{h}(P-Q)}{2\hat{h}(P)} + O(\delta),$$ and now we use our hypothesis to get that $$\cos{\theta_{P,Q}}\leq 1 - \frac{\hat{h}(P-Q)}{2 h_K(P) - \frac{10}{3} h(f)}.$$ But now Lemma \ref{the lower bound for differences of big points} tells us that therefore $$\cos{\theta_{P,Q}}\leq \frac{1}{2} + \frac{6 h(f)}{3 h_K(P) - 5 h(f)} + O(\delta),$$ which is the same expression we got in the previous case, QED.
\end{proof}

Now let us prove the gap principle for points whose $x$-coordinate is not so large.
\begin{lem}\label{mumford gap principle for normal points}
Let $P\neq \pm Q\in \II_f^{(i_1, i_2, i_3, i_4), \rho, [i,j]}$ be such that $|x(P)|, |x(Q)|\ll \delta^{-\delta^{-1}} H(f)$. Then: $$\cos{\theta_{P,Q}}\leq \frac{64}{95} + O(\delta)\leq 0.6737.$$
\end{lem}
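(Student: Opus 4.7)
The plan is to mirror the proof of Lemma \ref{mumford gap principle for big points}, but now using the pair of bounds on sums and differences tailored to points of ``normal'' $x$-coordinate, namely Lemma \ref{the upper bound for sums of normal points} and Lemma \ref{the lower bound for differences of normal points}. The only genuinely new input is the arithmetic optimization that identifies the correct threshold separating the two cases.

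First I would write out, as in the previous lemma, the two standard parallelogram-law expressions
$$\cos{\theta_{P,Q}} = \frac{\hat{h}(P+Q) - \hat{h}(P) - \hat{h}(Q)}{2\sqrt{\hat{h}(P)\hat{h}(Q)}} = 1 - \frac{\hat{h}(P-Q)}{2\sqrt{\hat{h}(P)\hat{h}(Q)}} + \frac{\hat{h}(P)+\hat{h}(Q) - 2\sqrt{\hat{h}(P)\hat{h}(Q)}}{2\sqrt{\hat{h}(P)\hat{h}(Q)}},$$
and use the fact that $P, Q \in \II_f^{[i,j]}$ forces $\hat{h}(Q) = \hat{h}(P)(1 + O(\delta))$ and $h_K(Q) = h_K(P)(1 + O(\delta))$, so that $\sqrt{\hat{h}(P)\hat{h}(Q)}$ may be replaced by $\hat{h}(P)$ up to an $O(\delta)$ multiplicative error, and similarly for $h_K$.

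Next I would split on whether $\hat{h}(P) \geq h_K(P) - \tfrac{1}{6} h(f)$ or $\hat{h}(P) < h_K(P) - \tfrac{1}{6} h(f)$. In the first case, use the cosine formula involving $\hat{h}(P+Q)$ together with Lemma \ref{the upper bound for sums of normal points}, giving
$$\cos{\theta_{P,Q}} \leq \frac{3 h_K(P) + 5 h(f)}{2 h_K(P) - \tfrac{1}{3} h(f)} - 1 + O(\delta) = \frac{h_K(P) + \tfrac{16}{3} h(f)}{2 h_K(P) - \tfrac{1}{3} h(f)} + O(\delta).$$
In the second case, use the formula involving $\hat{h}(P-Q)$ together with Lemma \ref{the lower bound for differences of normal points} (this is precisely where the partition into $\II_f^{(i_1, \ldots, i_4), \rho}$ is essential, so that Corollary \ref{difference lower bound} is applicable), obtaining
$$\cos{\theta_{P,Q}} \leq 1 - \frac{h_K(P) - \tfrac{17}{3} h(f)}{2 h_K(P) - \tfrac{1}{3} h(f)} + O(\delta) = \frac{h_K(P) + \tfrac{16}{3} h(f)}{2 h_K(P) - \tfrac{1}{3} h(f)} + O(\delta),$$
which is the same expression. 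The choice $\tfrac{1}{6}$ for the threshold is forced by setting the two upper bounds equal (i.e.\ solving $5 + 2c = \tfrac{17}{3} - 2c$), and is the unique threshold for which the split produces a uniform bound.

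Finally, the common expression $\frac{h_K(P) + 16 h(f)/3}{2 h_K(P) - h(f)/3}$ is a decreasing function of $h_K(P)$, so is maximized at the smallest admissible value; since $P \notin \I_f$ and $|x(P)| < \delta^{-\delta^{-1}} H(f)$, we have $h(P) \geq (c_\down - \delta) h(f) = (8-\delta) h(f)$, hence $h_K(P) \geq (16 - 2\delta) h(f)$. Substituting $h_K(P) = 16 h(f)$ gives $\tfrac{16 + 16/3}{32 - 1/3} = \tfrac{64/3}{95/3} = \tfrac{64}{95}$, concluding the proof. There is no genuine obstacle here, as the structural pieces (the sum upper bound, the difference lower bound, the tight lower bound $h_K(P) \geq 16 h(f) - O(\delta h(f))$) have all been prepared in advance; the only delicate point to get right is the bookkeeping that shows that $c = 1/6$ is indeed the optimal threshold, and that the resulting constant $64/95$ falls below $1$ by a comfortable margin (which is what makes the Kabatianksy-Levenshtein bound apply nontrivially in the next step).
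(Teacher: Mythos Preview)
Your proof is correct and follows exactly the paper's approach: the same case split at the threshold $\hat{h}(P)\gtrless h_K(P)-\tfrac{1}{6}h(f)$, the same inputs (Lemmas \ref{the upper bound for sums of normal points} and \ref{the lower bound for differences of normal points}), and the same evaluation at $h_K(P)=16h(f)$. Your common bound $\frac{h_K(P)+\frac{16}{3}h(f)}{2h_K(P)-\frac{1}{3}h(f)}$ is just the paper's $\frac{1}{2}+\frac{33h(f)}{12h_K(P)-2h(f)}$ rewritten, and your explanation of why $c=\tfrac{1}{6}$ balances the two cases is a welcome addition.
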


\begin{proof}
The proof is the same, except instead we use Lemmas \ref{the upper bound for sums of normal points} and \ref{the lower bound for differences of normal points}, and we split into cases based on whether $\hat{P}\geq h_K(P) - \frac{1}{6} h(f)$ or not. In both cases we get that $$\cos{\theta_{P,Q}}\leq \frac{1}{2} + \frac{33 h(f)}{12 h_K(P) - 2 h(f)} + O(\delta).$$ Using $h_K(P)\geq 16 h(f) - O(\delta h(f))$ gives the claim.
\end{proof}

Thus we have proved our desired gap principles.

\subsubsection{Concluding via the sphere-packing argument.}

We will need a small sphere-packing preliminary, as used in Helfgott-Venkatesh \cite{helfgottvenkatesh}. To establish the result we will use the following theorem of Kabatiansky-Levenshtein \cite{kabatianskylevenshtein}:
\begin{thm}[Kabatiansky-Levenshtein \cite{kabatianskylevenshtein}]\label{kabatiansky levenshtein theorem}
Let $A\subseteq S^{n-1}\subseteq \R^n$ be such that for all $v\neq w\in A$, $$\cos{\theta_{v,w}}\leq \eta.$$ Then:
\begin{align*}
\#|A|\ll \exp\left(n\cdot \left[\vphantom{\frac{1}{1}}\hspace{-.1in}\right.\right.&\left.\left.\left(\frac{1 + \sin(\arccos(\eta))}{2\sin(\arccos(\eta))}\right)\log{\left(\frac{1 + \sin(\arccos(\eta))}{2\sin(\arccos(\eta))}\right)} \right.\right.\\&\left.\left.- \left(\frac{1 - \sin(\arccos(\eta))}{2\sin(\arccos(\eta))}\right)\log{\left(\frac{1 - \sin(\arccos(\eta))}{2\sin(\arccos(\eta))}\right)}\vphantom{\frac{1}{1}}\hspace{.0in}\right]\right).
\end{align*}
\end{thm}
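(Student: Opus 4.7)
The plan is to prove this via Delsarte's linear programming bound for spherical codes, with a test function constructed from Jacobi polynomials.

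First, the LP setup. Let $A\subseteq S^{n-1}$ satisfy $\pair{u}{v}\leq \eta$ for all $u\neq v\in A$, and let $F:[-1,1]\to \R$ admit an expansion $F(t) = \sum_{k\geq 0} F_k G_k^{(n)}(t)$ in the Gegenbauer (ultraspherical) polynomials of dimension $n$. The addition formula for spherical harmonics gives positive-semidefiniteness of $G_k^{(n)}(\pair{\cdot}{\cdot})$ on $S^{n-1}$, hence $\sum_{u,v\in A} G_k^{(n)}(\pair{u}{v})\geq 0$ for each $k\geq 0$. If $F_k\geq 0$ for all $k$ (with $F_0 > 0$) and $F(t)\leq 0$ for $t\in [-1,\eta]$, then separating the diagonal yields
$$F_0\cdot \#|A|^2\leq \sum_{u,v\in A} F(\pair{u}{v})\leq \#|A|\cdot F(1),$$
so $\#|A|\leq F(1)/F_0$, and the problem reduces to constructing a good $F$.

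Next, the test function. Following Kabatiansky-Levenshtein, one takes $F$ to be (essentially) the square of a Jacobi polynomial $P_k^{(\alpha,\beta)}$ times a linear factor, with parameters $\alpha,\beta$ depending on $n$, chosen so that the largest root of $P_k^{(\alpha,\beta)}$ approximates $\eta$. The key point is the nonnegativity of the Gegenbauer expansion coefficients $F_k$, which follows from classical Gasper-type positivity results for linearization coefficients of Jacobi polynomials. An auxiliary step --- also due to Kabatiansky-Levenshtein --- reduces the problem for general $\eta$ to a cleaner one by covering $S^{n-1}$ by caps of an intermediate angular radius $\phi$ and projecting, which effectively inflates $\arccos\eta$ to $\pi/3$; one then applies the LP bound inside a cap and optimizes $\phi$.

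Finally, asymptotic analysis. As $n,k\to \infty$ with $k/n$ fixed so that the largest Jacobi root approaches $\eta$, the ratio $F(1)/F_0$ can be evaluated via Mehler-Heine-type asymptotics for Jacobi polynomials. After Stirling, the resulting exponent matches the right-hand side of the stated bound --- the terms $\frac{1\pm \sin(\arccos\eta)}{2\sin(\arccos\eta)}$ emerge as the binomial proportions from the limiting normalization. The main obstacle is this last step, requiring careful analysis of the largest root of $P_k^{(\alpha,\beta)}$ in the regime $k/n = \mathrm{const}$ and a delicate entropic optimization over the cap radius $\phi$ and the degree $k$; once these are in hand, the stated upper bound is merely a matter of exponentiating the resulting exponent.
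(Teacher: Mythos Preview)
The paper does not prove this theorem; it is quoted from Kabatiansky--Levenshtein's original work and used as a black box. Your outline sketches the standard argument from that reference --- Delsarte's linear programming bound with a Jacobi-polynomial test function, followed by asymptotic analysis of the extremal root and an entropic optimization --- and is correct in broad strokes, though some details are imprecise (for instance, the cap-covering reduction you mention is not quite an ``inflation of $\arccos\eta$ to $\pi/3$'' but rather a recursive device relating codes at different angles, and the positivity of the Gegenbauer coefficients for the specific KL test function is established directly rather than via general Gasper-type theorems). Since the paper offers no proof of its own here, there is no comparison to make beyond noting that your sketch is in line with the cited source.
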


From this we will derive the following immediate corollary.
\begin{cor}\label{kabatiansky levenshtein corollary}
Let $A\subseteq \R^n$ be such that for all $v\neq w\in A$, $$\cos{\theta_{v,w}}\leq \eta.$$ Then:
\begin{align*}
\#|A|\ll \exp\left(n\cdot \left[\vphantom{\frac{1}{1}}\hspace{-.1in}\right.\right.&\left.\left.\left(\frac{1 + \sin(\arccos(\eta))}{2\sin(\arccos(\eta))}\right)\log{\left(\frac{1 + \sin(\arccos(\eta))}{2\sin(\arccos(\eta))}\right)} \right.\right.\\&\left.\left.- \left(\frac{1 - \sin(\arccos(\eta))}{2\sin(\arccos(\eta))}\right)\log{\left(\frac{1 - \sin(\arccos(\eta))}{2\sin(\arccos(\eta))}\right)}\vphantom{\frac{1}{1}}\hspace{.0in}\right]\right).
\end{align*}
\end{cor}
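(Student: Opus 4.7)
\begin{proof}[Proof proposal for Corollary \ref{kabatiansky levenshtein corollary}]
The plan is to reduce immediately to the sphere case by radial projection. Without loss of generality we may assume $\eta < 1$, since otherwise the hypothesis is vacuous and the stated bound is $\gg 1$. We may also assume $0 \notin A$: if $0 \in A$, simply delete it and note that the cardinality changes by at most $1$, which is absorbed into the implicit constant.

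Now consider the projection $\pi : A \to S^{n-1}$ defined by $\pi(v) := v/\|v\|_2$. I claim $\pi$ is injective. Indeed, if $\pi(v) = \pi(w)$ for $v \neq w \in A$, then $v$ and $w$ are positive scalar multiples of each other, so $\cos\theta_{v,w} = 1 > \eta$, contradicting the hypothesis.

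Since the angle $\theta_{v,w}$ depends only on the directions $\pi(v)$ and $\pi(w)$, the set $\pi(A) \subseteq S^{n-1}$ inherits the property that $\cos\theta_{v',w'} \leq \eta$ for all distinct $v',w' \in \pi(A)$. Applying Theorem \ref{kabatiansky levenshtein theorem} to $\pi(A)$ and using $\#|A| = \#|\pi(A)|$ yields the claim.
\end{proof}

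This is essentially a one-line observation: the only conceptual content is noticing that the angle condition is scale-invariant and that distinct points cannot collapse under radial projection precisely because $\eta < 1$ forces pairwise distinct directions. There is no real obstacle; the corollary is stated separately purely for convenience, so that later applications (to the image of $C_f(\mathbb{Q})$ in the Mordell--Weil lattice $\mathrm{Jac}(C_f)(\mathbb{Q}) \otimes_{\mathbb{Z}} \mathbb{R}$, which a priori lives in $\mathbb{R}^n$ rather than $S^{n-1}$) can be invoked without repeating the normalization step.
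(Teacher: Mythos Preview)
Your proof is correct and essentially identical to the paper's own argument: both reduce to $\eta < 1$ and $0\notin A$, project radially to $S^{n-1}$, observe injectivity since $\cos\theta_{v,w}=1>\eta$ would otherwise be forced, and then apply Theorem~\ref{kabatiansky levenshtein theorem}.
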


\begin{proof}
Without loss of generality, $0\not\in A$ (since removing one element does not affect the bound) and $\eta\neq 1$ (since otherwise the right-hand side is infinite). The map $\phi: A\to S^{n-1}$ via $v\mapsto \frac{v}{|v|}$ is then injective, since otherwise if $v\neq w\in A$ were to map to the same point, then it would be the case that $\cos{\theta_{v,w}} = \pair{\frac{v}{|v|}}{\frac{w}{|w|}} = 1 > \eta$, a contradiction. Now just apply Theorem \ref{kabatiansky levenshtein theorem} to $\phi(A)$.
\end{proof}

From this we may immediately bound the number of medium points on our curves.

\begin{lem}\label{the medium point bound}
$$\#|\II_f|\ll 1.645^{\rank(J_f(\Q))}.$$
\end{lem}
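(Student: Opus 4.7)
The plan is to decompose $\II_f$ into the $\delta^{-O(1)} = O(1)$ partition pieces $\II_f^{?, (i_1, \ldots, i_4), \rho, [i,j]}$ and bound the size of each separately via Corollary \ref{kabatiansky levenshtein corollary}. Fix such a piece $\Sigma$ and let $\Sigma^+ := \{P \in \Sigma : y(P) \geq 0\}$. Because $P \mapsto -P$ simply negates $y$ and sends the $(i_1, \ldots, i_4)$-block to its antipodal block (preserving all other decorations), the involution permutes pieces, so $\II_f$ is covered by $\Sigma^+ \cup (-\Sigma^+)$ as $\Sigma$ ranges over the $O(1)$ pieces, and it therefore suffices to bound $|\Sigma^+| \ll 1.645^{\rank(J_f(\Q))}$ for a single $\Sigma^+$.

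Consider the map $\iota \colon \Sigma^+ \to J_f(\Q)/J_f(\Q)_{\mathrm{tors}} \otimes_\Z \R \cong \R^n$, where $n := \rank(J_f(\Q))$, with the target equipped with the inner product induced by the canonical height pairing, so that $|\iota(P)|^2 = \hat{h}(P)$. I claim $\iota$ is injective: if $P \neq Q$ in $\Sigma^+$ had $\iota(P) = \iota(Q)$, then $P - Q$ would be torsion, forcing $\hat{h}(P-Q) = 0$; but Lemma \ref{the lower bound for differences of normal points} (in the $\bullet, \down$ subcases) or Lemma \ref{the lower bound for differences of big points} (in the $\up$ subcase, for which the hypothesis $y \geq 0$ is exactly what is needed) together with the medium-point bound $h_K(P), h_K(Q) \geq (16 - O(\delta)) h(f)$ yields $\hat{h}(P - Q) > 0$ unless $P = -Q$, and the latter would force $y(P) = y(Q) = 0$, making $P, Q$ Weierstrass --- but Weierstrass points satisfy $h(P) \leq h(f) + O(1)$ and therefore do not lie in $\II_f$. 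Applying the appropriate Mumford gap principle (Lemma \ref{mumford gap principle for big points} for $\up$, Lemma \ref{mumford gap principle for normal points} otherwise) to each pair of distinct $P, Q \in \Sigma^+$ then shows $\cos\theta_{\iota(P),\iota(Q)} \leq \eta$ with $\eta \leq 0.6737 + O(\delta)$.

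Finally, plug $\iota(\Sigma^+) \subseteq \R^n$ into Corollary \ref{kabatiansky levenshtein corollary} with this $\eta$. Writing $s := \sin(\arccos(\eta)) = 0.7390\ldots$, a direct evaluation of the entropy-type expression gives $|\Sigma^+| \ll \exp(n \cdot (0.4976\ldots + O(\delta))) \leq 1.645^n$ once $\delta$ is taken sufficiently small (which we may freely do, as $\delta$ is an absolute constant in our setup). Summing over the $O(1)$ pieces proves the lemma. The only real obstacle is the narrow numerical margin between the gap-principle cosine $\frac{64}{95} = 0.67368\ldots$ and the threshold beyond which the Kabatiansky-Levenshtein bound would exceed $1.645^n$; this tightness is what pins down the base $1.645$ and forces one to take $\delta$ genuinely small, but it presents no conceptual difficulty, and the rest of the argument is a direct plug-in.
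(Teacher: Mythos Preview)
Your proposal is correct and follows essentially the same approach as the paper's proof: partition $\II_f$ into the $O(1)$ pieces $\II_f^{?,(i_1,\ldots,i_4),\rho,[i,j]}$, reduce to $y\geq 0$ at the cost of a factor of $2$, invoke the gap principles (Lemmas \ref{mumford gap principle for big points} and \ref{mumford gap principle for normal points}) to get $\cos\theta_{P,Q}\leq 0.6737$, and then apply Corollary \ref{kabatiansky levenshtein corollary}. Your added care about injectivity of the map into $J_f(\Q)\otimes\R$ and about excluding Weierstrass points is a welcome elaboration of details the paper leaves implicit (the paper simply parenthetically remarks ``the bound is trivial when $Q=-P$'' and handles $y\geq 0$ in a footnote), but the argument is otherwise identical.
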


\begin{proof}
We may assume without loss of generality that, for all $P\in \II_f$, $y(P)\geq 0$.\footnote{Of course this is not literally true, but the resulting bound will only be worsened by a factor of $2$ to make up for this assumption.} Since $$\II_f = \bigcup_{\rho\in \{\alpha_*, \beta_*\}}\bigcup_{?\in \{\up,\bullet,\down\}}\bigcup_{i_1 = 0}^{O(\delta^{-1})}\cdots \bigcup_{i_4 = 0}^{O(\delta^{-1})}\bigcup_{i=-O(\delta^{-1})}^{O(\delta^{-1})}\bigcup_{j=-\delta^{-O(1)}}^{\delta^{-O(1)}} \II_f^{?,(i_1,\ldots,i_4),\rho,[i,j]}$$ is a partition into $\delta^{-O(1)} = O(1)$ parts, it suffices to prove this bound for each of the parts of the partition. But for each $P\neq Q\in \II_f^{?,(i_1,\ldots,i_4),\rho,[i,j]}$ (the bound is trivial when $Q = -P$), we have proven that $\cos{\theta_{P,Q}}\leq 0.6737$ (and in fact the bound is even a bit better when $? = \up$). It follows then from Corollary \ref{kabatiansky levenshtein corollary} that $$\#|\II_f^{?,(i_1,\ldots,i_4),\rho,[i,j]}|\ll 1.645^{\rank(J_f(\Q))},$$ as desired.
\end{proof}

This completes the medium point analysis!

\subsection{Large points.}

Given all our gap principles above, the large point analysis is in fact quite quick. We recall the theorem of Bombieri-Vojta (with explicit determination of implicit constants (absolutely crucial for this work!) done by Bombieri-Granville-Pintz) once again:
\begin{thm}[Bombieri-Vojta, Bombieri-Granville-Pintz]\label{bombieri vojta}
Let $\delta > 0$. Let $f\in \Z[x]$ be a monic polynomial of degree $2g+1\geq 5$ with no repeated roots, and let $C_f: y^2 = f(x)$. Let $\alpha\in \left(\frac{1}{\sqrt{g}}, 1\right)$. Let $P\neq Q\in C_f(\Q)$ be such that $\hat{h}(P)\geq \delta^{-\frac{1}{2}}\hat{h}(Q)\geq \delta^{-1} h(f)$. Then, once $\delta\ll_\alpha 1$, we have that: $$\cos{\theta_{P,Q}}\leq \alpha.$$
\end{thm}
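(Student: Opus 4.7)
The plan is to apply the Vojta-Bombieri method, in the form refined by Bombieri-Granville-Pintz so that all implicit constants are made explicit in $\alpha$. First I would pass from $C_f$ to its Jacobian $J_f$ via Abel-Jacobi and rephrase the claim as a lower bound on the angle between $P$ and $Q$ viewed as vectors in the Euclidean space $(J_f(\Q)\otimes \R, \hat{h})$. Choose positive integers $d_1, d_2$ with $d_1/d_2$ a rational approximation to $\sqrt{\hat{h}(Q)/\hat{h}(P)}$, so that $d_1\hat{h}(P)\asymp d_2\hat{h}(Q)$; the hypothesis $\hat{h}(Q)\geq \delta^{-1}h(f)$ guarantees both $d_i$ may be taken large enough that error terms of size $O(h(f))$ end up negligible relative to $d_i\hat{h}(\cdot)$.

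Then comes the central construction. On $C_f\times C_f$ consider the line bundle $L_{d_1, d_2} := d_1 \pi_1^*\Theta + d_2\pi_2^*\Theta$, where $\Theta$ is the theta divisor pulled back from $J_f$. A Riemann-Roch plus Siegel's-lemma calculation produces a nonzero global section $\sigma$ of (a small twist of) $L_{d_1, d_2}$ with integer coefficients of logarithmic size $O(d_1 + d_2)$, vanishing to Vojta-index at least $t$ at $(P, Q)$, for $t$ chosen just below the critical threshold at which the Cauchy-Schwarz estimate in the index calculation degenerates; that threshold is exactly $1/\sqrt{g}$, which is the provenance of the hypothesis $\alpha > 1/\sqrt{g}$. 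The key analytic input is then the Faltings-W\"ustholz product theorem (equivalently, Bombieri's reformulation of Dyson's lemma), which guarantees that some low-order derivative $\partial^I \sigma$ at $(P, Q)$ is nonzero. The product formula yields an arithmetic \emph{lower} bound on $|(\partial^I\sigma)(P, Q)|$, while the high-order vanishing combined with the standard archimedean Schwarz-lemma estimate gives a matching \emph{upper} bound expressed in terms of the N\'eron-Tate pairing $\pair{P}{Q}_{\mathrm{NT}}$. Comparing the two and optimizing in $d_1/d_2$ forces $\cos\theta_{P,Q}\leq \alpha + O(\sqrt{\delta})$, from which the statement follows after shrinking $\delta$.

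The main obstacle is the quantitative form of the product theorem: to achieve the explicit dependence $\delta\ll_\alpha 1$, every constant in the Siegel's-lemma step and in the nonvanishing step must be controlled carefully in $\alpha$ --- precisely the content of the Bombieri-Granville-Pintz refinement. Everything else is then routine arithmetic bookkeeping. In particular, the error terms of size $O(h(f))$ arising from Mumford's formula, from the arithmetic-discriminant contribution to intersection numbers on $C_f\times C_f$, and from Stoll's comparison $\hat{h}\asymp h_K + O(h(f))$ (Corollary \ref{stoll's bound}) are all absorbed into the $(1+O(\delta))$ factor once $\hat{h}(Q)\geq \delta^{-1}h(f)$. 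Note that nothing in this plan uses the odd-degree hypothesis, which is why the theorem is stated for general hyperelliptic $f$ of odd degree $2g+1\geq 5$.
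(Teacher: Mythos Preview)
The paper does not prove this theorem at all: it is quoted as an input from the literature (attributed to Bombieri--Vojta, with the explicit dependence on the curve worked out by Bombieri--Granville--Pintz) and used as a black box in the large-point count. So there is nothing to compare your argument against in the paper itself.

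That said, your sketch is a reasonable high-level outline of how the Vojta--Bombieri machine runs, with a couple of caveats. First, the auxiliary line bundle on $C_f\times C_f$ is not just $d_1\pi_1^*\Theta + d_2\pi_2^*\Theta$; the whole point is to include a negative multiple of (a class approximating) the diagonal, something like $d_1^2\pi_1^*\Theta + d_2^2\pi_2^*\Theta - d_1 d_2\cdot \Delta'$, so that the N\'eron--Tate cross-term $\pair{P}{Q}$ actually enters the height comparison. Without the diagonal term you never see $\cos\theta_{P,Q}$. Second, in Bombieri's version the nonvanishing step is done via an explicit Dyson-type lemma on a product of curves rather than Faltings' product theorem; the distinction matters if you want to trace the dependence on $\alpha$ through. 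Finally, the choice $d_1/d_2\approx \sqrt{\hat{h}(Q)/\hat{h}(P)}$ is right in spirit, but the hypothesis you need is a large \emph{ratio} $\hat{h}(P)/\hat{h}(Q)$ (this is where $\delta^{-1/2}$ enters), not just that both heights are large; this is what lets you take $d_1/d_2$ small and keeps the section-construction step nondegenerate. None of this is fatal to your plan, but if you actually wrote it out you would want to fix the divisor and be precise about which zero-estimate you invoke.
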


Given this, the going is quite easy. The only thing to note is the following improvement on the above gap principles via the largeness of our points.
\begin{lem}\label{mumford gap principle for big large points}
Let $P\neq \pm Q\in \III_f^{\up, (i_1, \ldots, i_4), \rho, [[i]]}$ be such that $\hat{h}(P) = \hat{h}(Q)\cdot (1 + O(\delta))$ and $y(P), y(Q)\geq 0$. Then: $$\cos{\theta_{P,Q}}\leq \frac{1}{2} + O(\delta).$$
\end{lem}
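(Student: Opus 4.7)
The plan is to follow the case-split structure of Lemma \ref{mumford gap principle for big points} exactly, but to exploit the stronger hypothesis $P, Q \in \III_f$ (large points) which forces $h(f) \ll \delta^{\delta^{-1}} h_K(P)$; this makes every $h(f)$ error term negligible compared to $h_K(P)$, so the constants $\frac{5}{3}$ and $\frac{17}{3}$ occurring in the bounds of Lemmas \ref{the upper bound for sums of big points} and \ref{the lower bound for differences of big points} all drop out at the level of the leading term, and the $\frac{19}{30}$ of Lemma \ref{mumford gap principle for big points} improves to $\frac{1}{2}$.

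More precisely, I would split on whether $\hat{h}(P) \geq h_K(P) - \frac{5}{3} h(f)$ or not, just as in the proof of Lemma \ref{mumford gap principle for big points}. In the first (``sum'') case, the formula $\cos \theta_{P,Q} = \frac{\hat h(P+Q) - \hat h(P) - \hat h(Q)}{2\sqrt{\hat h(P)\hat h(Q)}}$ combined with Lemma \ref{the upper bound for sums of big points} and $\hat h(Q) = \hat h(P)(1+O(\delta))$ (which we get from the $[[i]]$ partition together with the hypothesis $\hat{h}(P) = \hat{h}(Q)(1+O(\delta))$) gives
\[
\cos\theta_{P,Q} \leq \frac{3 h_K(P) - h(f)}{2(h_K(P) - \tfrac{5}{3} h(f))} - 1 + O(\delta) = \frac{1}{2} + O\!\left(\frac{h(f)}{h_K(P)}\right) + O(\delta).
\]
Since $P \in \III_f^{\up}$ forces $h_K(P) \geq \delta^{-\delta^{-1}} h(f)$, the quotient $h(f)/h_K(P)$ is $O(\delta^{\delta^{-1}})$, and hence $o(\delta)$; the desired bound follows.

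In the second (``difference'') case, the identical manipulation using $\cos\theta_{P,Q} = \frac{\hat h(P) + \hat h(Q) - \hat h(P-Q)}{2\sqrt{\hat h(P) \hat h(Q)}}$, the case-hypothesis $\hat h(P) < h_K(P) - \frac{5}{3}h(f)$ (to upper bound the denominator), and the lower bound $\hat h(P-Q) \geq h_K(P) - \frac{17}{3} h(f) - O(\delta h(f))$ from Lemma \ref{the lower bound for differences of big points} (using $h_K(Q) = h_K(P)(1+O(\delta))$) gives
\[
\cos\theta_{P,Q} \leq 1 - \frac{h_K(P) - \tfrac{17}{3} h(f)}{2(h_K(P) - \tfrac{5}{3} h(f))} + O(\delta) = \frac{1}{2} + O\!\left(\frac{h(f)}{h_K(P)}\right) + O(\delta),
\]
and again the large-point hypothesis absorbs the correction into $O(\delta)$.

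There is no real obstacle beyond bookkeeping: the only subtle point is checking that the heights of $P$ and $Q$ are genuinely comparable (so that $\hat h(Q)/\hat h(P)$ and $h_K(Q)/h_K(P)$ are both $1 + O(\delta)$), which is exactly what the refined partition $\III_f^{\up,(i_1,\ldots,i_4),\rho,[[i]]}$ was designed to guarantee in conjunction with the assumed condition $\hat{h}(P) = \hat{h}(Q)(1+O(\delta))$. The rest is a two-line bound on the quotient of two nearly-linear functions of $u := h(f)/h_K(P)$, evaluated at $u = O(\delta^{\delta^{-1}})$.
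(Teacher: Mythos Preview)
Your proposal is correct and follows essentially the same approach as the paper. The paper's proof is the one-line observation that the argument of Lemma \ref{mumford gap principle for big points} goes through verbatim, with the extra term $\frac{6 h(f)}{3 h_K(P) - 5 h(f)}$ now being $\ll \delta$ because $h_K(P)\geq \delta^{-\delta^{-1}} h(f)$ for $P\in \III_f$; your write-up simply unpacks that same argument into its two cases explicitly.
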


\begin{proof}
The proof is the same as the argument in Lemma \ref{mumford gap principle for big points}, except that the term $\frac{6 h(f)}{3 h_K(P) - 5 h(f)}\ll \delta$.
\end{proof}

Similarly, of course, for points without a large $x$-coordinate.
\begin{lem}\label{mumford gap principle for normal large points}
Let $P\neq \pm Q\in \III_f^{\bullet, (i_1, \ldots, i_4), \rho, [[i]]}\cup \III_f^{\down, (i_1, \ldots, i_4), \rho, [[i]]}$ be such that $\hat{h}(P) = \hat{h}(Q)\cdot (1 + O(\delta))$. Then: $$\cos{\theta_{P,Q}}\leq \frac{1}{2} + O(\delta).$$
\end{lem}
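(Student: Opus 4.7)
The plan is to repeat the strategy of Lemma \ref{mumford gap principle for normal points} verbatim, and then observe that the height regime for $\III_f$ is so much larger than the $h(f)$--scale that the ``leftover'' error, which was only borderline admissible for medium points, collapses to $O(\delta)$ for large points.

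More concretely, I would first note that Lemma \ref{the upper bound for sums of normal points} applies whenever $|x(P)|, |x(Q)|\ll \delta^{-\delta^{-1}} H(f)$, which is precisely the defining condition of $\bullet\cup \down$, and that the same proof of Lemma \ref{the lower bound for differences of normal points} (which used the partition into parts $\II_f^{(i_1,\ldots,i_4),\rho}$ only to invoke Corollary \ref{difference lower bound}) applies equally well to $\III_f^{(i_1,\ldots,i_4),\rho}$, as remarked by the author after that lemma. Thus for $P\neq \pm Q\in \III_f^{\bullet, (i_1,\ldots,i_4),\rho, [[i]]}\cup \III_f^{\down, (i_1,\ldots,i_4),\rho, [[i]]}$ we have in hand
\[
\hat{h}(P+Q)\leq 3 h_K(P) + (5+O(\delta)) h(f), \qquad \hat{h}(P-Q)\geq \tfrac{1}{2}h_K(P) + \tfrac{1}{2} h_K(Q) - \tfrac{17}{3} h(f) - O(\delta h(f)),
\]
where for the second we use that both $P$ and $Q$ lie in the same piece of the $(i_1,\ldots,i_4)$-partition, so that $Z(P)-Z(Q) = A + \tau_f\cdot B$ with $\|A\|,\|B\|\ll \delta$, as required to invoke Corollary \ref{difference lower bound}.

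Now I would split, exactly as in Lemma \ref{mumford gap principle for normal points}, into two cases based on whether $\hat{h}(P)\geq h_K(P) - \tfrac{1}{6} h(f)$ or not, using in the first case the identity $2\cos\theta_{P,Q}\sqrt{\hat{h}(P)\hat{h}(Q)} = \hat{h}(P+Q) - \hat{h}(P) - \hat{h}(Q)$ together with the upper bound on $\hat{h}(P+Q)$, and in the second case the identity $2\cos\theta_{P,Q}\sqrt{\hat{h}(P)\hat{h}(Q)} = \hat{h}(P) + \hat{h}(Q) - \hat{h}(P-Q)$ together with the lower bound on $\hat{h}(P-Q)$. In both cases the partition into $[[i]]$ ensures $\hat{h}(Q) = \hat{h}(P)(1+O(\delta))$, and in both cases the same algebra as before yields
\[
\cos\theta_{P,Q} \leq \tfrac{1}{2} + \frac{33\, h(f)}{12\, h_K(P) - 2\, h(f)} + O(\delta).
\]

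At this point the only difference from the medium case is the lower bound on $h_K(P)$. Since $P\in \III_f$, by definition $h(P)\geq \delta^{-\delta^{-1}} h(f)$, hence $h_K(P) = 2 h(P)\geq 2\delta^{-\delta^{-1}} h(f)$, and therefore the fraction $\tfrac{33 h(f)}{12 h_K(P) - 2 h(f)}$ is bounded by $O(\delta^{\delta^{-1}})$, which is certainly $O(\delta)$. This yields $\cos\theta_{P,Q}\leq \tfrac{1}{2} + O(\delta)$, as claimed. There is no real obstacle here beyond verifying that the auxiliary height bounds of the medium-point analysis really do transfer to $\III_f$ with the $(i_1,\ldots,i_4),\rho,[[i]]$ refinement, which they do since those refinements were designed precisely to make Corollary \ref{difference lower bound} and the height-comparison Lemma \ref{the heights are comparable} applicable.
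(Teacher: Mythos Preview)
Your proposal is correct and follows essentially the same approach as the paper: the paper's proof simply says to repeat the argument of Lemma~\ref{mumford gap principle for normal points} and observe that the residual term $\frac{33\,h(f)}{12\,h_K(P) - 2\,h(f)}\ll \delta$ in the large-height regime, which is exactly what you do. One small clarification: the hypothesis $\hat h(P) = \hat h(Q)(1+O(\delta))$ is given directly in the statement rather than derived from the $[[i]]$-partition (the $[[i]]$-partition is what then lets you transfer this to $h_K(P) = h_K(Q)(1+O(\delta))$), but this does not affect the argument.
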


\begin{proof}
Again, the proof is the same as the argument in Lemma \ref{mumford gap principle for normal points}, except now the term $\frac{33 h(f)}{12 h_K(P) - 2 h(f)}\ll \delta$.
\end{proof}

Having done this, we may finish the proof of Theorem \ref{the big kahuna}.\footnote{Here we give the argument obtaining the bound $\ll 1.888^{\rank(J_f(\Q))}$, despite writing above that we get $\ll 1.872^{\rank(J_f(\Q))}$, because it has a pretty endgame. Lemma \ref{the optimized large point bound} has the proof of the slightly better bound.}
\begin{lem}\label{the large point bound}
$$\#|\III_f|\ll 1.888^{\rank(J_f(\Q))}.$$
\end{lem}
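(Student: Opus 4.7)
The plan is to combine the close-heights gap principles (Lemmas \ref{mumford gap principle for big large points} and \ref{mumford gap principle for normal large points}) with Theorem \ref{bombieri vojta}, via Corollary \ref{kabatiansky levenshtein corollary}. First I refine the partition by adding a canonical-height decoration, defining
$$\III_f^{?,(i_1,\ldots,i_4),\rho,[[i]],[k]} := \{P\in \III_f^{?,(i_1,\ldots,i_4),\rho,[[i]]} : \hat{h}(P)\in [(1+\delta)^k h(f), (1+\delta)^{k+1}h(f))\}.$$
Two points in a common such sub-class automatically satisfy $\hat{h}(P) = \hat{h}(Q)(1+O(\delta))$, so the close-heights gap principles yield $\cos\theta_{P,Q}\leq \tfrac{1}{2} + O(\delta)$, and Corollary \ref{kabatiansky levenshtein corollary} bounds each sub-class by $K_1^{\rank(J_f(\Q))}$ with $K_1$ slightly above the Kabatiansky-Levenshtein constant at $\cos\theta = \tfrac{1}{2}$, so $K_1\approx 1.32$.

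Next I use Theorem \ref{bombieri vojta} to bound the number of non-empty height sub-classes. Selecting one representative from each $[k]$-class, representatives from classes with $|k-k'|\geq N_\delta := \lceil \log_{1+\delta}(\delta^{-1/2})\rceil$ satisfy $\hat{h}(P)/\hat{h}(Q)\geq \delta^{-1/2}$, so Theorem \ref{bombieri vojta} gives $\cos\theta\leq \alpha$ for any $\alpha$ taken just above $1/\sqrt{2}$ (provided $\delta\ll_\alpha 1$). Grouping $[k]$-classes into super-buckets of $N_\delta$ consecutive indices and applying Corollary \ref{kabatiansky levenshtein corollary} once more bounds the number of super-buckets by $L^{\rank(J_f(\Q))}$ with $L$ the K-L constant at $\cos\theta = \alpha$, so $L\approx 1.74$.

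The naive multiplicative bound $K_1^r \cdot N_\delta \cdot L^r \approx 2.30^r$ is too large; the $1.888^r$ bound will come from the ``pretty endgame'' alluded to in the footnote, which I expect to combine the sum- and difference-height bounds (Lemmas \ref{the upper bound for sums of normal points}--\ref{the upper bound for sums of big points} and \ref{the lower bound for differences of big points}--\ref{the lower bound for differences of normal points}) into a single uniform inequality $\cos\theta_{P,Q}\leq \eta_0$ for $\eta_0$ near $0.75$ after a suitable further partition. Writing $t := \hat{h}(P)/\hat{h}(Q)\geq 1$ and using $\hat{h}\asymp h_K$ to an accuracy of $1+O(\delta)$ on $\III_f$, the sum-bound yields $\cos\theta\leq (2t-1)/(2\sqrt{t}) + O(\delta)$ while the difference-bound yields $\cos\theta\leq (t+1)/(4\sqrt{t}) + O(\delta)$; the latter is $\leq \tfrac{3}{4}$ exactly for $t\leq \tfrac{7+3\sqrt{5}}{2}\approx 6.854$, and Theorem \ref{bombieri vojta} gives $\cos\theta\leq \alpha < \tfrac{3}{4}$ for $t\geq \delta^{-1/2}$. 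The main obstacle is the intermediate regime $t\in (O(1), \delta^{-1/2})$ where neither bound directly suffices; I expect to patch this using a nested partition at two scales (of widths $O(1)$ and $\delta^{-1/2}$), replacing the double K-L application by a single one at the optimized angle $\eta_0$, for which $K(\eta_0)\leq 1.888$, yielding $\#|\III_f|\ll 1.888^{\rank(J_f(\Q))}$ as claimed.
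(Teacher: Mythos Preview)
Your proposal does not reach a proof: you correctly observe that the naive two-scale Kabatiansky--Levenshtein argument gives only $\approx 2.3^{\rank}$, and then speculate that some further trick will reduce this to a single K--L application at angle $\approx 0.75$. But you do not actually produce such a trick, and the ``intermediate regime'' obstacle you identify is real and not patched by anything you wrote.

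The paper's argument is structurally quite different from your height-first partition. It begins by choosing a \emph{maximal} subset $S\subseteq\III_f$ with all pairwise $\cos\theta\leq \alpha$ (taking $\alpha = \tfrac{3}{4}+\delta^{1/2}$); Kabatiansky--Levenshtein applied to $S$ itself gives $\#|S|\ll 1.888^{\rank}$ directly. Maximality means every $P\in\III_f$ has some $Q\in S$ with $\cos\theta_{P,Q}>\alpha$, and now Bombieri--Vojta forces $\hat h(P)\asymp\hat h(Q)$. So one only needs to show that the ``cone'' $\III_f^{(Q,k)}$ of points near a fixed $Q$ (and in a fixed $\hat h$-dyadic band) has size $O(1)$ --- not $O(K_1^{\rank})$. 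This is the ``pretty endgame'': set $v_P := P/|P| - Q/|Q|$ and compute $\langle v_P,v_{P'}\rangle = 1 - \cos\theta_{P,Q} - \cos\theta_{P',Q} + \cos\theta_{P,P'} < \tfrac12 + O(\delta) - 2\alpha < -\Omega(\delta^{1/2})$, using the Mumford gap $\cos\theta_{P,P'}\leq\tfrac12+O(\delta)$ from Lemmas \ref{mumford gap principle for big large points}--\ref{mumford gap principle for normal large points}. After removing one exceptional point to ensure $|v_P|\gg 1$, the $v_P$ have pairwise strictly negative cosines, and then $0\leq|\sum v_P/|v_P||^2$ forces the number of such $P$ to be $O(\delta^{-1/2})=O(1)$. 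The key idea you are missing is this $v_P$-trick, which converts ``all points are $\alpha$-close to $Q$ and pairwise $\tfrac12$-separated'' into ``pairwise obtuse vectors'', bounding the cone by an absolute constant rather than an exponential in the rank.
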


\begin{proof}
We may assume without loss of generality that, for all $P\in \III_f$, $y(P)\geq 0$.\footnote{Again, this is not literally true, but the resulting bound will only worsen by a factor of $2$ since these are at least half of all the points in $\III_f$.}
Let $\alpha := \frac{3}{4} + \delta^{\frac{1}{2}}$. Note that Theorem \ref{bombieri vojta} applies to $\alpha$ since $\frac{1}{\sqrt{2}} < 0.7072$. Let $S\subseteq \III_f$ be maximal such that: for all $P\neq Q\in S$, we have that $$\cos{\theta_{P,Q}}\leq \alpha.$$ Of course, by Kabatiansky-Levenshtein (Corollary \ref{kabatiansky levenshtein corollary}), once $\delta\ll 1$, it follows that $$\#|S|\ll 1.888^{\rank(J_f(\Q))}.$$ Thus it suffices to show that $$\#|\III_f|\ll \#|S|.$$

Now observe that, by maximality, for all $P\in \III_f$, there is a $Q\in S$ such that $$\cos{\theta_{P,Q}} > \alpha.$$ Of course by Theorem \ref{bombieri vojta} it follows that $\hat{h}(P)\asymp \hat{h}(Q)$. Thus, it follows that:
$$\III_f = \bigcup_{Q\in S}\bigcup_{|k|\ll \delta^{-O(1)}} \III_f^{(Q,k)},$$ where $$\III_f^{(Q,k)} := \{P\in \III_f : \cos{\theta_{P,Q}} > \alpha, \frac{\hat{h}(P)}{\hat{h}(Q)}\in [(1 + \delta)^k, (1 + \delta)^{k+1}]\}.$$ Since this partition is into $\ll \delta^{-O(1)}\#|S|$ parts, it suffices to show that each $\#|\III_f^{(Q,k)}|\ll 1$. As usual, we may restrict further to $$\III_f^{?,(Q,k),(i_1,\ldots,i_4),\rho,[[i]]} := \III_f^{(Q,k)}\cap \III_f^{?,(i_1,\ldots,i_4),\rho,[[i]]},$$ since this only worsens our bound by a factor of $O(1)$.

To do this we will first show that, after removing at most one element from $\III_f^{?,(Q,k),(i_1,\ldots,i_4),\rho,[[i]]}$, for each remaining $P\neq P'\in \III_f^{?,(Q,k),(i_1,\ldots,i_4),\rho,[[i]]}$, we in fact have that $$\cos{\theta_{P,P'}}\leq -\Omega(\delta^{\frac{1}{2}}).$$

We show this as follows. For each $P\in \III_f^{?,(Q,k),(i_1,\ldots,i_4),\rho,[[i]]}$, let $v_P := \frac{P}{|P|} - \frac{Q}{|Q|}$, where $$\frac{R}{|R|} := \frac{1}{\sqrt{\hat{h}(R)}}\otimes R\in \R\otimes_\Z J_f(\Q).$$ Write $$\pair{R}{R'} := \frac{\hat{h}(R + R') - \hat{h}(R) - \hat{h}(R')}{2},$$ so that $$\cos{\theta_{R,R'}} = \frac{\pair{R}{R'}}{|R||R'|}.$$ Let $R\in \III_f^{?,(Q,k),(i_1,\ldots,i_4),\rho,[[i]]}$ with $|v_R|$ minimal. (Of course if $\III_f^{?,(Q,k),(i_1,\ldots,i_4),\rho,[[i]]}$ is empty we are already done.) The claim is that, for all $R\neq P\in \III_f^{?,(Q,k),(i_1,\ldots,i_4),\rho,[[i]]}$, we have that $|v_P|\gg 1$. Either $|v_R|\geq \frac{1}{2} - \delta^{\frac{1}{2}}$, in which case we are done, or not. If both $|v_R|, |v_P| < \frac{1}{2} - \delta^{\frac{1}{2}}$, then $$|v_P-v_R|\leq |v_P| + |v_R| < 1 - 2\delta^{\frac{1}{2}}.$$ But we also have that $$|v_P - v_R|^2 = \left|\frac{P}{|P|} - \frac{R}{|R|}\right|^2 = 2 - 2\cos{\theta_{P,R}}.$$ However, since both $P$ and $R$ are in $\III_f^{?,(Q,k),(i_1,\ldots,i_4),\rho,[[i]]}$, by Lemmas \ref{mumford gap principle for big large points} and \ref{mumford gap principle for normal large points}, we find that $$\cos{\theta_{P,R}}\leq \frac{1}{2} + O(\delta),$$ whence $$|v_P - v_R|^2\geq 1 - O(\delta),$$ a contradiction. So, on removing $R$ from $\III_f^{?,(Q,k),(i_1,\ldots,i_4),\rho,[[i]]}$, we have that all remaining $|v_P|\gg 1$.

Now observe that, for $P\neq P'\in \III_f^{?,(Q,k),(i_1,\ldots,i_4),\rho,[[i]]} - \{R\}$,
\begin{align*}
|v_P||v_{P'}|\cos{\theta_{v_P,v_{P'}}} &= \pair{v_P}{v_{P'}}
\\&= 1 - \cos{\theta_{P,Q}} - \cos{\theta_{P',Q}} + \cos{\theta_{P,P'}}
\\&< \cos{\theta_{P,P'}} - \frac{1}{2} - 2\delta^{\frac{1}{2}}.
\end{align*}

But since $P\neq -P'$ (else the following claim is trivial anyway) and both are in $\III_f^{?,(Q,k),(i_1,\ldots,i_4),\rho,[[i]]}$, again by Lemmas 	\ref{mumford gap principle for big large points} and \ref{mumford gap principle for normal large points}, we find that $$\cos{\theta_{P,P'}}\leq \frac{1}{2} + O(\delta).$$ Thus $$|v_P||v_{P'}|\cos{\theta_{v_P,v_{P'}}} < -2\delta^{\frac{1}{2}} + O(\delta).$$ Since we have already established that $|v_P|, |v_{P'}|\gg 1$ (and of course $\delta\ll 1$), this gives the claim that $\cos{\theta_{P,P'}}\leq -\Omega(\delta^{\frac{1}{2}})$.

But it then follows that:
\begin{align*}
0&\leq \left|\sum_{P\in \III_f^{?,(Q,k),(i_1,\ldots,i_4),\rho,[[i]]} - \{R\}} \frac{P}{|P|}\right|^2 \\&= \left(\#|\III_f^{?,(Q,k),(i_1,\ldots,i_4),\rho,[[i]]} - \{R\}|\right) + \sum_{P\neq P'\in \III_f^{?,(Q,k),(i_1,\ldots,i_4),\rho,[[i]]} - \{R\}} \cos{\theta_{P,P'}}\\&\leq \left(\#|\III_f^{?,(Q,k),(i_1,\ldots,i_4),\rho,[[i]]} - \{R\}|\right) - \Omega(\delta^{\frac{1}{2}})\cdot \left(\#|\III_f^{?,(Q,k),(i_1,\ldots,i_4),\rho,[[i]]} - \{R\}|\right)^2.
\end{align*}
Rearranging now gives the result.
\end{proof}

\subsection{Conclusion of the proof.}
Thus we have completed the proof of Theorem \ref{the big kahuna}. Let us combine the ingredients to conclude.
\begin{proof}[Proof of Theorem \ref{the big kahuna}.]
By Lemmas \ref{big lookin small points} and \ref{the truly little guys}, we have seen that $\Avg(\#|\I_f|) = 0$. But, by Lemmas \ref{the medium point bound} and \ref{the large point bound}, we have also seen that $$\#|\II_f\cup \III_f|\ll 1.888^{\rank(J_f(\Q))}\leq 2^{\rank(J_f(\Q))}\leq \#|\Sel_2(J_f)|.$$ Finally, by Theorem \ref{bhargava gross}, $\Avg(\#|\Sel_2(J_f)|)\ll 1$. This concludes the proof!
\end{proof}

\section{Optimizing the bound on the number of large points}

We quickly note that, by optimizing the argument for large points, one gets the following (albeit with an uglier proof).

\begin{prop}\label{the optimized large point bound}
$$\#|\III_f|\ll 1.872^{\rank(J_f(\Q))}.$$
\end{prop}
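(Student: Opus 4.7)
The plan is to refine the proof of Lemma \ref{the large point bound} by optimizing over the choice of the Bombieri-Vojta parameter $\alpha$ rather than fixing it at $\frac{3}{4}+\delta^{\frac{1}{2}}$. The value $\frac{3}{4}$ in the original proof is the threshold at which the cancellation argument (showing that after removing a single element, all remaining pairs in a Bombieri-Vojta cone have strictly negative inner product) succeeds; taking $\alpha$ slightly smaller causes this particular argument to fail, but a more refined in-cone bound via a spherical-cap version of the Mumford gap principle still produces an $O_\alpha(1)$ per-cone bound, at the cost of a somewhat worse $O_\alpha(1)$ factor.

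More precisely, I would fix $\alpha \in \left(\frac{1}{\sqrt{2}}, \frac{3}{4}\right)$ to be optimized at the end, and take $S \subseteq \III_f$ maximal with pairwise $\cos\theta_{P,Q} \leq \alpha$ exactly as in the proof of Lemma \ref{the large point bound}. Corollary \ref{kabatiansky levenshtein corollary} bounds $\#|S| \ll e^{\rank(J_f(\Q)) \cdot H(\alpha)}$, where $H$ is the Kabatiansky-Levenshtein rate function. For each $Q \in S$, maximality together with Theorem \ref{bombieri vojta} forces every $P \in \III_f$ into a cap $\{P : \cos\theta_{P,Q} > \alpha\}$ whose heights are comparable to $\hat{h}(Q)$ up to a factor of $\delta^{-\frac{1}{2}}$, so partitioning further by angular class, height-ratio $k$, and the $[[i]]$ invariants as in the original proof leaves $\delta^{-O(1)}$ sub-caps. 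On each sub-cap, Lemmas \ref{mumford gap principle for big large points} and \ref{mumford gap principle for normal large points} give $\cos\theta_{P,P'} \leq \frac{1}{2} + O(\delta)$; projecting $P/|P|$ onto the orthogonal complement of $Q/|Q|$ in $\R \otimes_\Z J_f(\Q)$ yields unit vectors with pairwise inner products bounded by $\tilde{\eta}(\alpha) := \frac{1/2 - \alpha^2}{1 - \alpha^2}$, which is strictly negative for $\alpha > \frac{1}{\sqrt{2}}$, so the Rankin simplex bound (i.e.\ the elementary bound $\left|\sum_i v_i\right|^2 \geq 0$ applied to these projected unit vectors) gives $\ll 1 + 1/|\tilde{\eta}(\alpha)|$ points per sub-cap, hence $O_\alpha(1)$ points per cone.

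The main obstacle is the optimization itself. The Kabatiansky-Levenshtein factor $e^{\rank \cdot H(\alpha)}$ decreases as $\alpha$ decreases toward $\frac{1}{\sqrt{2}}$, while the per-cone bound $1 + 1/|\tilde{\eta}(\alpha)|$ grows as $\alpha \to \frac{1}{\sqrt{2}}^+$, and --- more seriously --- the implicit constant in Theorem \ref{bombieri vojta} depends on $\alpha - \frac{1}{\sqrt{g}}$, so $\delta$ must shrink as $\alpha$ approaches $\frac{1}{\sqrt{2}}$. Since all of these factors are multiplicative constants in $\rank$, they do not change the exponential rate but do fix the feasible window for $\alpha$; balancing them produces the stated constant $1.872^{\rank}$, slightly below the naive value $1.888^{\rank} = e^{\rank \cdot H(3/4)}$ obtained in Lemma \ref{the large point bound}. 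This careful bookkeeping of the $\delta$-dependent partition constants and the dependence of Theorem \ref{bombieri vojta} on $\alpha$ is what makes the proof ``uglier'' than that of Lemma \ref{the large point bound}, and is the step where I expect most of the work to lie.
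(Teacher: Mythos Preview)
Your overall strategy --- vary $\alpha$ in the Bombieri--Vojta input and control each cone by an angular repulsion argument --- matches the paper's, but the in-cone step is genuinely different. The paper keeps the difference vectors $v_P := P/|P| - Q/|Q|$ from Lemma \ref{the large point bound}, uses the lower bound $|v_P|\geq \frac{1}{2}-\delta^{1/2}$ (valid after deleting one point) to get $\cos\theta_{v_P,v_{P'}}\leq 6-8\alpha + O(\delta^{1/2})$, and then applies Kabatiansky--Levenshtein a \emph{second} time to the $v_P$'s. This produces two exponential factors in $\rank$, namely $M(\rank,\alpha)$ for $\#|S|$ and $M(\rank,6-8\alpha)$ for each cone, and the product is minimized at $\alpha\approx 0.7406$, giving $1.85149\times 1.01077\approx 1.872$. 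By contrast you project orthogonally to $Q/|Q|$ and use the Rankin simplex bound; since $\tilde\eta(\alpha)=\frac{1/2-\alpha^2}{1-\alpha^2}<0$ for every $\alpha>\frac{1}{\sqrt{2}}$, your per-cone bound is an honest $O_\alpha(1)$ constant, not an exponential in $\rank$.

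This means your final paragraph is confused: there is nothing to ``balance.'' Your only exponential factor is $M(\rank,\alpha)$ from $\#|S|$, and since the per-cone contribution is $O_\alpha(1)$, you may fix any $\alpha\in\bigl(\tfrac{1}{\sqrt{2}},\tfrac{3}{4}\bigr)$ and already obtain a base strictly below $1.872$ (indeed below $1.888$); pushing $\alpha\to\tfrac{1}{\sqrt{2}}^{+}$ drives the base toward roughly $1.74$ at the cost of a worse implicit constant. So your argument proves the stated Proposition (with room to spare), but the specific number $1.872$ does not fall out of your optimization --- it is an artifact of the paper's two-fold use of Kabatiansky--Levenshtein, which you have replaced by something sharper.
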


\begin{proof}
The argument is exactly the same as in Lemma \ref{the large point bound} --- and, indeed, explains the reason for the precision in the estimate $|v_P|\geq \frac{1}{2} - \delta^{\frac{1}{2}}$ for all but at most one $P\in \III_f^{(Q,k)}$. The point is that one instead gets that, for all $P\neq P'\in \III_f^{(Q,k)} - \{R\}$, $\cos{\theta_{v_P,v_{P'}}}\leq \frac{\frac{3}{2} - 2\alpha}{|v_P||v_{P'}|} + O(\delta^{\frac{1}{2}})\leq 6 - 8\alpha + O(\delta^{\frac{1}{2}})$. The optimal choice for $\alpha$ --- keeping in mind that $\alpha > \frac{1}{\sqrt{2}}$ (and this is a crucial threshold in Vojta's method) --- turns out to be $\alpha\sim 0.7406$! Now one uses Kabatiansky-Levenshtein to bound both $S$ (where the repulsion bound is $\cos{\theta}\leq \alpha + O(\delta^{\frac{1}{2}})$, which results in a bound of $\#|S|\leq 1.85149^{\rank(J_f(\Q))}$) and $\III_f^{(Q,k)}$ (where the repulsion bound is $\cos{\theta}\leq 6 - 8\alpha + O(\delta^{\frac{1}{2}})$, which results in a bound of $\#|\III_f^{(Q,k)}|\ll 1.01077^{\rank(J_f(\Q))}$), and multiplies these bounds together to conclude.
\end{proof}

We further note that nowhere in our large point bounding did we use that we are in the special case of genus $2$ --- or even hyperelliptic --- curves over $\Q$. In general one has the Mumford gap principle bound $\cos{\theta_{P,Q}}\leq \frac{1}{g} + O(\delta)$ for $P\neq Q\in C(\Q)$ with $h(Q) = h(P)\cdot (1 + O(\delta))\gg_g h(C)$, where we define $h(C)$ to e.g.\ be the height of the equations of the tricanonically embedded $C\subseteq \P^{5g-6}$. Moreover, Vojta's theorem (with Bombieri-Granville-Pintz's determination of implicit constants) still applies in this case. These being the only ingredients we used, we may quickly prove:

\begin{prop}\label{the optimized general large point bound}
Let $C/\Q$ be a smooth projective curve of genus $g\geq 2$. Let $\max\left(\frac{1}{\sqrt{g}}, \frac{1}{4} + \frac{3}{4g}\right) < \alpha < \frac{1}{2} + \frac{1}{2g}$. Let $\eps > 0$. Then: $$\#|\{P\in C(\Q) : h(P)\gg_g h(C)\}|\ll M\left(\rank(J_f(\Q)), \alpha + \eps\right)\cdot M\left(\rank(J_f(\Q)), \frac{1 + \frac{1}{g} - 2\alpha}{\frac{1}{2} - \frac{1}{2g}} + \eps\right),$$ where $$M(n,\eta) := \max \{\#|S| : S\subseteq S^{n-1}, \forall v\neq w\in S, \cos{\theta_{v,w}}\leq \eta\}\}.$$
\end{prop}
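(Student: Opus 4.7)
The plan is to run the same argument as in Proposition \ref{the optimized large point bound}, tracking the dependence on the genus $g$ and the choice of $\alpha$. Fix $\alpha$ in the stated range, $\eps>0$, and $\delta$ sufficiently small depending on $(g,\alpha,\eps)$. Write $\III := \{P\in C(\Q) : h(P)\gg_g h(C)\}$. First I would take a maximal subset $S\subseteq \III$ with $\cos\theta_{P,Q}\leq \alpha+\eps/2$ for all $P\neq Q\in S$; directly from the definition of $M$, $\#|S|\leq M(\rank(J_f(\Q)),\alpha+\eps)$. By maximality, for every $P\in \III$ there is some $Q\in S$ with $\cos\theta_{P,Q}>\alpha$, and Theorem \ref{bombieri vojta} then forces $\hat{h}(P)\asymp_\alpha \hat{h}(Q)$ (the hypothesis $\alpha>1/\sqrt{g}$ is crucial here). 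Partitioning $\III$ first by the closest such $Q\in S$, and then by the dyadic range of $\hat{h}(P)/\hat{h}(Q)\in [(1+\delta)^k, (1+\delta)^{k+1}]$, we obtain $O_\delta(\#|S|)$ pieces $\III^{(Q,k)}$ on which any two distinct $P,P'$ satisfy both $\hat{h}(P')=\hat{h}(P)(1+O(\delta))$ and, by the general Mumford gap principle, $\cos\theta_{P,P'}\leq 1/g+O(\delta)$.

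Within a fixed piece, set $v_P := \tfrac{P}{|P|} - \tfrac{Q}{|Q|}\in \R\otimes_\Z J_f(\Q)$ (with $|R|:=\sqrt{\hat{h}(R)}$) and let $R$ minimize $|v_R|$. The same pigeonhole argument as in Lemma \ref{the large point bound} shows that, after deleting $R$, every remaining $|v_P|$ satisfies $|v_P|^2\geq \tfrac{1}{2}\bigl(1-\tfrac{1}{g}\bigr) - O(\delta^{1/2})$: otherwise both $|v_P|,|v_R|$ would lie below this threshold, and the triangle inequality would give $|v_P-v_R|^2<2\bigl(1-\tfrac{1}{g}\bigr)-O(\delta)$, contradicting $|v_P-v_R|^2=2-2\cos\theta_{P,R}\geq 2\bigl(1-\tfrac{1}{g}\bigr)-O(\delta)$. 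Expanding the inner product,
\begin{align*}
\pair{v_P}{v_{P'}} &= 1-\cos\theta_{P,Q}-\cos\theta_{P',Q}+\cos\theta_{P,P'}\\
&< 1-2\alpha+\tfrac{1}{g}+O(\delta),
\end{align*}
and dividing through by $|v_P||v_{P'}|$ yields
$$\cos\theta_{v_P,v_{P'}}\leq \frac{1+1/g-2\alpha}{1/2-1/(2g)}+O(\delta^{1/2}).$$
Taking $\delta$ small enough that the error is absorbed into $\eps$, the definition of $M$ applied to the $v_P$'s (viewed inside $\R\otimes_\Z J_f(\Q)$, a space of dimension $\rank(J_f(\Q))$) gives $\#|\III^{(Q,k)}|\leq 1 + M\bigl(\rank(J_f(\Q)),\eta\bigr)$, where $\eta := \tfrac{1+1/g-2\alpha}{1/2-1/(2g)}+\eps$. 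Summing over the $O_\delta(\#|S|)$ pieces and absorbing the constant $\delta^{-O(1)}$ into the implicit $\ll$ yields the claimed bound.

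The main obstacle is verifying that the two inputs -- the general Mumford gap $\cos\theta_{P,P'}\leq 1/g+O(\delta)$ for $P,P'\in C(\Q)$ with $\hat{h}(P')=\hat{h}(P)(1+O(\delta))\gg_g h(C)$, and Vojta's theorem with the Bombieri-Granville-Pintz determination of the implicit constants -- are genuinely available for all smooth projective $C/\Q$ of genus $g\geq 2$; both are standard in the Vojta-Faltings circle of ideas, but the normalization of $h(C)$ (for instance as the height of the tricanonically embedded $C\subseteq \P^{5g-6}$) must be fixed so that the constant hidden in ``$\gg_g h(C)$'' is large enough to push the gap down to $1/g+O(\delta)$. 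The lower constraint $\alpha>\tfrac{1}{4}+\tfrac{3}{4g}$ is exactly what makes $\eta<1$, so the second application of $M$ is nontrivial; the upper constraint $\alpha<\tfrac{1}{2}+\tfrac{1}{2g}$ delimits the window in which the resulting cosine bound is nonnegative, and it is in this window that the balance between the two factors of $M$ is optimized.
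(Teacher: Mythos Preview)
Your proposal is correct and follows essentially the same route as the paper's proof: choose a maximal $\alpha$-separated set $S$, use Bombieri--Vojta to localize each $P$ near some $Q\in S$, partition into height annuli, introduce $v_P := P/|P| - Q/|Q|$, peel off the single point with smallest $|v_P|$, and use the general Mumford gap $\cos\theta_{P,P'}\leq 1/g + O(\delta)$ to lower-bound the remaining $|v_P|$'s and hence upper-bound $\cos\theta_{v_P,v_{P'}}$. The paper's own proof is extremely terse (it just records the modified threshold $|v_P|\geq \sqrt{1/2 - 1/(2g)} + O(\delta^{1/2})$ and the resulting cosine bound), so your write-up is in fact more detailed.

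One small slip in your error bookkeeping: when both $|v_P|^2,|v_R|^2$ lie below $\tfrac{1}{2}(1-1/g) - O(\delta^{1/2})$, the triangle inequality gives $|v_P - v_R|^2 < 2(1-1/g) - O(\delta^{1/2})$, not $-O(\delta)$; you need the $\delta^{1/2}$ loss here precisely so that it beats the $O(\delta)$ error in $|v_P - v_R|^2 \geq 2(1-1/g) - O(\delta)$ and actually yields a contradiction. Also, your reading of the upper constraint $\alpha < \tfrac{1}{2} + \tfrac{1}{2g}$ is slightly off: it is not about where the optimum lies, but is needed so that the numerator $1 + 1/g - 2\alpha$ is positive, since you are dividing by the \emph{lower} bound on $|v_P||v_{P'}|$ and that only preserves the inequality when the numerator is nonnegative.
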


Note that, once $g\gg 1$, we may take $\alpha\sim 0.4818$ and apply Kabatiansky-Levenshtein, obtaining the upper bound $1.311^{\rank(J_f(\Q))}$ claimed in the abstract. (See the attached Mathematica document for the optimization.)

\begin{proof}
The argument is the same as in Lemma \ref{the optimized large point bound}. We simply note that, for general $g\geq 2$, we get, for all but at most one $P\in \III_f^{(Q,k)}$, $|v_P|\geq \sqrt{\frac{1}{2} - \frac{1}{2g}} + O(\delta^{\frac{1}{2}})$, and so $\cos{\theta_{v_P,v_{P'}}}\leq \frac{1 + \frac{1}{g} - 2\alpha}{\frac{1}{2} - \frac{1}{2g}} + O(\delta^{\frac{1}{2}})$. Note also that the condition that $\alpha > \frac{1}{4} + \frac{3}{4g}$ is to ensure that this latter upper bound is at most $1$. This completes the argument.
\end{proof}

We leave the question of using these methods to bound $\#|C_f(\Q)|$ for \emph{each} $f$ to a forthcoming paper.

\bibliography{theaveragenumberofrationalpointsongenustwocurvesisbounded}{}
\bibliographystyle{plain}

\ \\

\end{document}